\documentclass[reqno]{amsproc}
\usepackage{amsmath}
\usepackage{amsfonts}
\usepackage{mathrsfs}
\usepackage{amssymb}
\usepackage{circuitikz}
\usepackage{color}
\usepackage{tikz}


\makeatletter
\@namedef{subjclassname@2010}{%
\textup{2010} Mathematics Subject
Classification} \makeatother

\DeclareMathOperator*{\bigcross}{\bigCross}
\DeclareMathOperator*{\smallcross}{\smallCross}
\DeclareMathOperator*{\card}{card}

\DeclareMathOperator{\supp}{supp}

\DeclareMathOperator{\lin}{lin}
\DeclareMathOperator{\sgn}{sgn}

\numberwithin{equation}{section}

\newtheorem{theorem}{Theorem}[section]
\newtheorem{corollary}[theorem]{Corollary}
\newtheorem{prop}[theorem]{Proposition}
\newtheorem{lemma}[theorem]{Lemma}

\theoremstyle{remark}
\newtheorem{df}[theorem]{Definition}
\newtheorem{rem}[theorem]{Remark}

\newcommand*{\ascr}{\mathscr{A}}
\newcommand*{\bscr}{\mathscr{B}}

\newcommand*{\uscr}{\mathscr{U}}
\newcommand*{\hh}{\mathcal{H}}
\newcommand*{\is}[2]{\langle#1,#2\rangle}
\newcommand*{\kk}{\mathcal{K}}

\newcommand*{\natu}{\mathbb{N}}
\newcommand*{\mcal}{\mathcal{M}}

\newcommand{\bigCross}{\mathbin{\tikz [x=1.55ex,y=1.55ex,line width=.18ex,
   baseline={([yshift=-0.55ex]current bounding
   box.center)}]
   \draw (-1,-1) -- (1,1) (-1,1) -- (1,-1);}}%

\newcommand{\smallCross}{\mathbin{\tikz [x=1.1ex,y=1.1ex,line width=.15ex,
   baseline={([yshift=-0.6ex]current bounding
   box.center)}]
   \draw (-1,-1) -- (1,1) (-1,1) -- (1,-1);}}%

\newcommand*{\nul}{\mathscr{N}}
\newcommand*{\comp}{\mathbb{C}}
\newcommand*{\borel}{\mathfrak{B}}
\newcommand*{\bou}{\boldsymbol{B}}
\newcommand*{\cbb}{\comp}

\newcommand*{\D}{\mathrm{d\hspace{.1ex}}}

\theoremstyle{definition}
\newtheorem{ex}[theorem]{Example}

\newcommand*{\Le}{\leqslant}
\newcommand*{\Ge}{\geqslant}
\newcommand*{\ran}{\mathscr{R}}
\newcommand*{\ogr}{\boldsymbol{B}}
\newcommand*{\rbb}{\mathbb{R}}
\newcommand*{\zbb}{\mathbb{Z}}
\newcommand*{\tbb}{\mathbb{T}}
\def\sslim{\qopname\relax m{{\sf{s}\textrm-}lim}}
\def\wwlim{\qopname\relax m{{\sf{w}\textrm-}lim}}
   \begin{document}

\title[Hyperrigidity I:
singly generated commutative $C^*$-algebras]
{Hyperrigidity I: singly generated commutative
$C^*$-algebras}

   \author[P. Pietrzycki and J. Stochel]{Pawe{\l} Pietrzycki and  Jan Stochel}

   \subjclass[2020]{Primary 46L07, 47A20, 47A63,
47B15; Secondary 44A60, 46G10}

   \keywords{Hyperrigidity, completely positive map,
dilations, (semi-) spectral measure, (sub-) normal
operator, weak and strong operator topologies, Choquet
boundary}

   \address{Wydzia{\l} Matematyki i Informatyki, Uniwersytet
Jagiello\'{n}ski, ul. {\L}ojasiewicza 6, PL-30348
Krak\'{o}w, Poland}

   \email{pawel.pietrzycki@im.uj.edu.pl}

   \address{Wydzia{\l} Matematyki i Informatyki, Uniwersytet
Jagiello\'{n}ski, ul. {\L}ojasiewicza 6, PL-30348
Krak\'{o}w, Poland}

   \email{jan.stochel@im.uj.edu.pl}

   \thanks{The research of both
authors was supported by the National Science Center
(NCN) Grant OPUS No.\ DEC-2021/43/B/ST1/01651.}

   \begin{abstract}
Although Arveson's hyperrigidity conjecture was
recently resolved negatively by B. Bilich and A.
Dor-On, the problem remains open for commutative
$C^*$-algebras. Relatively few examples of hyperrigid
sets are known in the commutative case. The main goal
of this paper is to determine which sets of monomials
in $t$ and $t^*$, where $t$ is a generator of a
commutative unital $C^*$-algebra, are hyperrigid. We
show that this class of hyperrigid sets has
significant connections to other areas of functional
analysis and mathematical physics. Moreover, we
develop a topological approach based on weak and
strong limits of normal (or subnormal) operators to
characterize hyperrigidity tracing back to ideas of C.
Kleski and L. G. Brown. Employing Choquet boundary
techniques, we present examples that discuss the
optimality of our results.
   \end{abstract}

   \maketitle

   \section{Introduction}
Denote by $\natu$ the set of all positive integers.
Henceforth, $\ogr(\hh)$ stands for the $C^*$-algebra
of all bounded linear operators on a (complex) Hilbert
space $\hh$, and $I$ denotes the identity operator on
$\hh$. We assume that completely positive maps are
linear and representations of unital $C^*$-algebras
preserve units and involutions.

The classical approximation theorem due to P. P.
Korovkin \cite{Kor53} states that for any sequence of
positive linear maps $\varPhi_k\colon C[0,1]\to
C[0,1]$ ($k\in \natu$),
   \begin{align*}
\lim_{k\to\infty}\|\varPhi_k(x^j)-x^j\|=0 \;\; \forall
j\in \{0,1,2\} \implies
\lim_{k\to\infty}\|\varPhi_k(f)-f\|=0 \;\; \forall
f\in C[0,1].
   \end{align*}
In other words, the asymptotic behaviour of the
sequence $\{\varPhi_k\}_{k=1}^\infty$ on the
$C^*$-algebra $C[0, 1]$ is uniquely determined by the
set $G=\{1, x, x^2\}$. Sets with this property are
called Korovkin sets. Korovkin's theorem unified many
existing approximation processes, such as the
Bernstein polynomial approximation of continuous
functions and the Fej\'{e}r trigonometric polynomial
approximation of continuous functions on the unit
circle. Another major achievement was the discovery of
geometric theory of Korovkin sets by Y. A.
\v{S}a\v{s}kin \cite{Sas67}. Namely, \v{S}a\v{s}kin
observed that the key property of $G$ is that the
Choquet boundary of the vector space spanned by $G$
coincides with $[0, 1]$ (see Section~\ref{Sec.2} for
the definition). Motivated both by the fundamental
role of the Choquet boundary in classical
approximation theory and by the importance of
approximation in the contemporary theory of operator
algebras, Arveson introduced hyperrigidity as a form
of noncommutative approximation that captures many
important operator-algebraic phenomena.
   \begin{df}[Hyperrigidity] \label{dyfnh}
A nonempty subset $ G$ of a unital $C^*$-algebra
$\ascr$ is said to be \textit{hyperrigid} ({\em
relative to} $\ascr$\/) if for any faithful
representation $\pi\colon \ascr\to \ogr(\hh)$ on a
Hilbert space $\hh$ and for any sequence
$\varPhi_k\colon \ogr(\hh)\to \ogr(\hh)$ ($k\in
\natu$) of unital completely positive (UCP) maps,
   \begin{align*}
\lim_{k\to\infty}\|\varPhi_k(\pi(g))-\pi(g)\|=0
\; \forall g\in G \implies
\lim_{k\to\infty}\|\varPhi_k(\pi(a))-\pi(a)\|=0
\; \forall a\in \ascr.
   \end{align*}
   \end{df}
Unlike Arveson's original definition of hyperrigidity
(cf.\ {\cite[Definition~1.1]{Arv11}}), we do not
require the set $G$ to be at most countable or to
generate $\ascr$. For further details, we refer the
reader to Appendix~\ref{App.B}. Let us also note that
even if $\ascr$ is commutative, the concept of
hyperrigidity is {\em a priori} stronger than that of
a Korovkin set, since in the former case, each map
$\varPhi_k$ can take noncommutative values.

Arveson provided several examples of hyperrigid sets.
In particular, he proved that if $T \in \ogr(\hh)$ is
a selfadjoint operator and $\ascr$ is the unital
$C^*$-algebra generated by $T$, then the set $G = \{I,
T, T^2 \}$ is hyperrigid in $\ascr$. Arveson asked in
\cite[Remark~9.5]{Arv11} whether the set $\{I,
T,f(T)\}$ is hyperrigid in the unital $C^*$-algebra
generated by a selfadjoint operator $T\in \ogr(\hh)$,
where $f$ is a strictly convex continuous function on
a closed subinterval of the real line $\rbb$
containing the spectrum of $T$. A positive answer was
provided by L. G. Brown in \cite{Brown16}, by using
the following theorem, which inspired us to develop a
new characterization of hyperrigidity for singly
generated commutative unital $C^*$ algebras (see
Theorem~\ref{jydgunr}). Below, the spectrum of an
operator $T\in \ogr(\hh)$ is denoted by $\sigma(T)$.
   \begin{theorem}[{\cite[Theorem~2.1]{Brown16}}]
Let $f$ be a strictly convex continuous function on an
interval $J\subseteq \rbb$, let $\hh$ be a Hilbert
space, and let $\{T_i\}_{i\in \varOmega}$ be a net of
selfadjoint operators in $\ogr(\hh)$ such that
$\sigma(T_i)\subseteq J$ for all $i\in \varOmega$. If
$\{T_i\}_{i\in \varOmega}$ converges weakly to a
selfadjoint operator $T\in \ogr(\hh)$ with
$\sigma(T)\subseteq J$, and if also $\{f(T_i)\}_{i\in
\varOmega}$ converges weakly to $f(T)$, then
$\{\varphi(T_i)\}_{i\in \varOmega}$ converges strongly
to $\varphi(T)$ for every bounded continuous function
$\varphi$ on $J$. In particular if the net
$\{T_i\}_{i\in \varOmega}$ is bounded, then
$\{T_i\}_{i\in \varOmega}$ converges strongly to $T$.
   \end{theorem}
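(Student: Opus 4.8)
The plan is to reduce the desired strong convergence to a weak-convergence statement and then prove the latter by a localized convexity-gap argument. First, it suffices to prove the following claim: for every bounded continuous function $g$ on $J$ one has $g(T_i)\to g(T)$ in the weak operator topology. Indeed, granting this for $g=\varphi$ and for $g=|\varphi|^2$, the identity
\[
\|\varphi(T_i)u-\varphi(T)u\|^2=\is{|\varphi|^2(T_i)u}{u}-2\re\is{\varphi(T_i)u}{\varphi(T)u}+\is{|\varphi|^2(T)u}{u}
\]
shows that the right-hand side tends to $\is{|\varphi|^2(T)u}{u}-2\is{|\varphi|^2(T)u}{u}+\is{|\varphi|^2(T)u}{u}=0$, which is exactly $\varphi(T_i)u\to\varphi(T)u$ (in the middle term one uses $\is{\varphi(T_i)u}{\varphi(T)u}\to\is{\varphi(T)u}{\varphi(T)u}$, i.e.\ weak convergence tested against the fixed vector $\varphi(T)u$). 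The hypotheses supply the claim for $g=1$, $g(t)=t$, and $g=f$; the whole difficulty is to propagate it to all bounded continuous $g$.

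The engine is the strict convexity of $f$, exploited through supporting lines. For $a\in\sigma(T)$ choose a subgradient $c_a$ and set $\ell_a(t)=f(a)+c_a(t-a)$ and $h_a=f-\ell_a$, so that $h_a\ge 0$ on $J$, $h_a(a)=0$, and $h_a(t)>0$ for $t\ne a$. By convexity the infimum of $h_a$ over $\{t\in J:|t-a|\ge\eta\}$ is attained at distance exactly $\eta$ from $a$, hence is a strictly positive number $\delta(a,\eta)$ even when the net is unbounded; compactness of $\sigma(T)$ together with strict convexity then yields a uniform bound $\delta(\eta):=\inf_{a\in\sigma(T)}\delta(a,\eta)>0$ for each fixed $\eta>0$. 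I would localize using the spectral measure $E$ of $T$: for a Borel set $B\ni a$ of small diameter and $u$ in the range of $E(B)$, the measure $\mu^u_{T_i}(\cdot)=\is{E_{T_i}(\cdot)u}{u}$ satisfies
\[
\is{h_a(T_i)u}{u}=\is{f(T_i)u}{u}-f(a)\|u\|^2-c_a\bigl(\is{T_i u}{u}-a\|u\|^2\bigr)\longrightarrow\is{h_a(T)u}{u}\le(\sup\nolimits_B h_a)\,\|u\|^2,
\]
where the limit uses the hypotheses on $T_i$ and $f(T_i)$ and the last inequality uses that $\mu^u_T$ is carried by $B$. Since $\sup_B h_a\to 0$ as $\operatorname{diam}B\to 0$, Chebyshev's inequality against the lower bound $\delta(\eta)$ forces $\mu^u_{T_i}$ to concentrate near $a$, whence $\|g(T_i)u-g(a)u\|\to 0$ up to an error controlled by $\operatorname{diam}B$ and by $\eta$, for every bounded continuous $g$.

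To assemble, I would fix $\varepsilon>0$, partition $\sigma(T)$ into finitely many Borel pieces $B_1,\dots,B_n$ of diameter $<\varepsilon$ with base points $a_k\in B_k$, put $P_k=E(B_k)$ and $u_k=P_k u$, and compare $\is{g(T_i)u}{u}=\sum_{k,l}\is{g(T_i)u_k}{u_l}$ with the step sum $\sum_k g(a_k)\|u_k\|^2=\is{g_\varepsilon(T)u}{u}$, where $g_\varepsilon=\sum_k g(a_k)\mathbf 1_{B_k}$ satisfies $\|g(T)-g_\varepsilon(T)\|\le\omega_g(\varepsilon)$. The diagonal terms are handled directly by the concentration above and sum in $\ell^1$ against $\|u_k\|^2$, so they cause no trouble, and the orthogonality of the $u_k$ disposes of the discretization of $g(T)$ cleanly.

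The main obstacle will be the off-diagonal terms $\sum_{k\ne l}\is{g(T_i)u_k}{u_l}$ and, more precisely, interchanging the refinement $\varepsilon\to 0$ with the net limit $i\to\infty$ without losing a factor growing in the number $n$ of pieces. This is exactly where strict convexity is indispensable: it is the uniform positive gap $\delta(\eta)$, valid along the possibly unbounded net, that makes the per-piece concentration quantitative and uniform in $k$. I expect to control the off-diagonal contribution by combining the per-piece estimates $\|g(T_i)u_k-g(a_k)u_k\|\to\text{small}$ with a Cauchy--Schwarz bookkeeping that keeps the total weighted by $\sum_k\|u_k\|^2=\|u\|^2$ rather than by $n$; should a direct bound resist, I would instead argue by contradiction along a subnet, extracting (by weak-$*$ compactness, and after establishing tightness of the $\mu^u_{T_i}$ from the growth of $f$) a limiting spectral measure and using the preserved Jensen gap to force it to coincide with $\mu^u_T$. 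Ruling out escape of spectral mass to the unbounded ends of $J$, where $f$ need not be coercive, together with the behaviour of subgradients near $\partial J$, constitutes the remaining technical work.
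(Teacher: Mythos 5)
First, a caveat: the paper does not prove this statement at all --- it is quoted verbatim from \cite[Theorem~2.1]{Brown16} as background motivation, so there is no internal proof to compare against; your attempt has to be judged on its own. Your opening reduction (strong convergence follows from weak convergence of $g(T_i)$ for $g=\varphi$ and $g=|\varphi|^2$ via the norm identity) is correct and is exactly the paper's Lemma~\ref{wphnw}, and the supporting-line/Jensen-gap mechanism is indeed the engine behind Brown-type results and behind the paper's own Theorem~\ref{main1}. The per-piece concentration estimate is also sound: for $u$ in the range of $E_T(B)$ with $a\in B$, the convergence $\is{h_a(T_i)u}{u}\to\is{h_a(T)u}{u}\le(\sup_B h_a)\|u\|^2$ together with $h_a\ge\delta(a,\eta)$ off an $\eta$-neighbourhood of $a$ gives $\limsup_i\mu^u_{T_i}(\{|t-a|\ge\eta\})\le(\sup_B h_a)\|u\|^2/\delta(a,\eta)$, hence $\limsup_i\|g(T_i)u-g(a)u\|^2\lesssim\bigl(\omega_g(\eta)^2+\|g\|_\infty^2(\sup_B h_a)/\delta(a,\eta)\bigr)\|u\|^2$.

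The gap is the assembly step, which you flag but do not close, and the specific hopes you express do not work. Writing $u=\sum_k u_k$ with $u_k=E_T(B_k)u$ and $n\asymp\varepsilon^{-1}$ pieces, the per-piece error is only $O\bigl(\omega_g(\eta)+\|g\|_\infty\sqrt{\omega_f(\varepsilon)/\delta(\eta)}\bigr)\|u_k\|$, and any Cauchy--Schwarz bookkeeping of $\sum_k\|(g(T_i)-g(a_k))u_k\|$ against $\sum_k\|u_k\|^2=\|u\|^2$ still produces a factor $\sqrt{n}$; since $n\,\omega_f(\varepsilon)/\delta(\eta)$ does not tend to $0$ as $\varepsilon\to0$ for fixed $\eta$, the two limits cannot be interchanged this way. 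Your fallback (subnets, weak-$*$ limits of $\mu^u_{T_i}$, tightness) also fails for a fixed $u$: the hypotheses control only three moments of $\mu^u_{T_i}$, and the Jensen gap pins down the limit measure only when $\mu^u_T$ is a point mass, i.e.\ when $u$ is an eigenvector of $T$; for $T$ with continuous spectrum this gives nothing. A genuinely new ingredient is needed to globalize --- for instance the observation that $\{\varphi\in C_b(J):\varphi(T_i)\to\varphi(T)\ \text{strongly}\}$ is a norm-closed $*$-subalgebra (so Stone--Weierstrass applies once one separating subfamily is produced), or, in the bounded-net case, the reduction of net convergence to the compression identity $f(T)=Pf(N)|_{\hh}$, $T=PN|_{\hh}$, which the paper handles globally by the equality case of Hansen's operator Jensen inequality (Theorem~\ref{hans}) and the Lieb--Ruskai inequality (Theorem~\ref{L-R}) in Lemma~\ref{wurnik} and Theorem~\ref{main1}, with no localization at all. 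Separately, your uniform bound $\delta(\eta)=\inf_{a\in\sigma(T)}\delta(a,\eta)>0$ presupposes supporting lines with uniformly bounded slopes; if an endpoint of $J$ lies in $\sigma(T)$ and the one-sided derivative of $f$ there is infinite (e.g.\ $f(t)=-\sqrt{t}$ on $[0,1]$ at $a=0$), no supporting line exists at that point, so the construction of $h_a$ must be replaced by secant-based convexity gaps; you acknowledge this but it remains open in your write-up.
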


In accordance with \v{S}a\v{s}kin's insightful
observation, Arveson \cite{Arv11} conjectured that
hyperrigidity is equivalent to the noncommutative
Choquet boundary of $ G$ being as large as possible.
This is now known as \textit{Arveson's hyperrigidity
conjecture} (see \cite[Conjecture~4.3]{Arv11}). Some
positive solutions of Arveson's hyperrigidity
conjecture have been found for certain classes of
$C^*$-algebras (see
\cite{Arv11,Kle14,CH18,Sal19,Sh20,Sch24}). However,
the conjecture ultimately has a negative solution.
Recently, a counterexample was found by B. Bilich and
A. Dor-On in \cite{Bi-Dor24} (see also
\cite{Bi24,Sch25}). However, Arveson's hyperrigidity
conjecture remains open for commutative
$C^*$-algebras, even in the singly generated~case. In
recent years, this issue has attracted considerable
interest in various parts of operator algebras and
operator theory
\cite{KS15,Clo18,Clo18b,CH18,DK21,Thom24,CH-Th24}.

This paper shows that hyperrigidity is closely tied to
characterizing when semispectral measures are
spectral, via their operator moments. The classical
von Neumann approach with Borel spectral measures on
$\rbb$ cannot capture features such as measurement
inaccuracy and measurement statistics. Hence modern
quantum theory relies on semispectral measures (see
\cite{P-S22-b} and references therein). Kiukas, Lahti,
and Ylinen \cite{KLY06JMAA,KLY06JMP} developed
quantization methods producing observables beyond
spectral measures, using operator moments, and asked:
{\em When is a positive operator measure projection
valued}\/? An early result (see
\cite{K-dM87,KLY06JMP,P-S21}) states that if $F$ is a
compactly supported Borel semispectral measure on
$\mathbb{R}$, then
   \begin{align} \label{fspyr}
\Big(\int_{\rbb} x F(\D x)\Big)^{2} = \int_{\rbb}
x^{2} F(\D x) \;\; \iff \;\; \textit{$F$ is spectral.}
   \end{align}
Some recently discovered characterizations of spectral
measures can be found in \cite[Theorem~4.2]{P-S21} and
\cite[Theorem~1.7]{P-S22-b}. These results enable
applications in operator theory, as shown by the
affirmative solution to \cite[Problem~1.1]{Curto20} in
\cite{P-S21}.
   \section{\label{Sec.m}Main results}
Relatively few examples of hyperrigid sets are known
in the case of commutative unital $C^*$-algebras. Even
proving the hyperrigidity of the set $G = \{1, x,
f(x)\}$, where $f$ is a strictly convex continuous
function on a closed subinterval of $\mathbb{R}$, or
of the operator system $A(K)$, consisting of all
continuous affine functions on a compact convex set $K
\subseteq \mathbb{R}^2$, requires considerable effort
(see \cite{Brown16, Sch24}).

In this section, we formulate the main results of the
paper. Our goal is to examine, for a commutative
unital $C^*$-algebra $\ascr$ generated by a single
element $t$, which subsets $G$ of $\big\{t^{*m}
t^n\colon (m,n) \in \zbb_+^2\big\}$ are hyperrigid in
$\ascr$, where $\zbb_+$ denotes the set of all
nonnegative integers. This choice has the advantage
that we can, to a large extent, determine when $G$
generates $\ascr$ (see Lemmas~\ref{winjkw} and
\ref{cxduns}). Another advantage is that it enables us
to provide a relatively simple criterion for
hyperrigidity. Our most general result shows that,
under an additional mild assumption, if $G$ generates
$\ascr$, then $G$ is automatically hyperrigid in
$\ascr$.
   \begin{theorem} \label{hypth}
Let $\ascr$ be a commutative unital $C^*$-algebra
generated by an element $t\in \ascr$, and let $\varXi
\subseteq \zbb_+^2$ be a set for which there exist
$p,q,r\in \zbb_+$ such that
   \begin{align*}
\text{$p\neq q$, $p+q<2r$ and $(p,q), (r,r) \in
\varXi$.}
   \end{align*}
If $G:=\{t^{*m} t^n \colon (m,n) \in \varXi\}$
generates $\ascr$, then $G$ is hyperrigid in $\ascr$.
   \end{theorem}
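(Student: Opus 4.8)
The plan is to reduce the hyperrigidity of $G$ to a rigidity statement for semispectral measures and then to exploit the arithmetic of the pair $(p,q),(r,r)$ through a strict-convexity argument. Since $\ascr$ is commutative and singly generated, $\ascr\cong C(X)$ with $X=\sigma(t)\subseteq\comp$, and a faithful representation $\pi$ sends $t$ to a normal operator $N=\pi(t)$ on $\hh$ with spectral measure $E$ supported on $X$; here $\pi(t^{*m}t^n)=N^{*m}N^n=\int_X\bar z^m z^n E(\D z)$. I would first invoke the paper's characterization of hyperrigidity (Theorem~\ref{jydgunr}), in the spirit of Kleski and Brown, to replace Definition~\ref{dyfnh} by the following concrete assertion: whenever $F$ is an $\ogr(\hh)$-valued semispectral measure on $X$ with $\int_X\bar z^m z^n F(\D z)=N^{*m}N^n$ for every $(m,n)\in\varXi$, then $F=E$. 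This is precisely a moment problem of the type in \eqref{fspyr}: the generators of $G$ fix finitely many operator moments of $F$, and hyperrigidity amounts to these moments forcing $F$ to be spectral and equal to $E$.

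The heart of the matter is to show that the two distinguished moments place the generators into the multiplicative domain of the unital completely positive map $\Phi(\varphi)=\int_X\varphi\,\D F$. Writing $z=\rho e^{i\theta}$, the element $(r,r)$ controls the purely radial moment $\int_X\rho^{2r}\,\D F=(N^*N)^r$, while $(p,q)$ with $p\neq q$ controls a genuinely non-radial moment $\int_X\rho^{p+q}e^{i(q-p)\theta}\,\D F=N^{*p}N^q$; the operator Cauchy--Schwarz (Kadison--Schwarz) inequality for $\Phi$ gives $(N^*N)^{p+q}=(N^{*p}N^q)^*(N^{*p}N^q)\Le\int_X\rho^{2(p+q)}\,\D F$. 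The condition $p+q<2r$ is exactly what should allow the reverse operator inequality $\int_X\rho^{2(p+q)}\,\D F\Le(N^*N)^{p+q}$ to be extracted from the radial moment by an operator Jensen/strict-convexity argument modelled on Brown's theorem \cite[Theorem~2.1]{Brown16}. Equality in Cauchy--Schwarz then places $\bar z^p z^q$ in the multiplicative domain of $\Phi$; since that domain is a $C^*$-subalgebra of $C(X)$, it also contains $|z|^{2}=(|z|^{2r})^{1/r}$ and hence, modulo the behaviour of $z\mapsto z$ near $0$, the coordinate monomials.

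With the distinguished generators in the multiplicative domain, I would use that $G$ generates $\ascr$ to finish by a Stone--Weierstrass argument: the multiplicative domain of $\Phi$ is a closed $*$-subalgebra of $C(X)$ containing a set that separates the points of $X$ (separation being forced precisely because $G$ generates $C(X)$), so it is all of $C(X)$. Hence $\Phi$ is a $*$-homomorphism, $F$ is spectral, and matching the first-order data gives $F=E$. Translating back through Theorem~\ref{jydgunr}, every weak-operator limit point of the spectral measures of the relevant net of normal operators $\{N_i\}$ equals $E$, so $\varphi(N_i)\to\varphi(N)$ weakly for every $\varphi\in C(X)$; in particular $N_i\to N$ and $N_i^*N_i\to N^*N$ weakly, whence $N_i\to N$ strongly by the elementary identity $\|(N_i-N)h\|^2=\is{N_i^*N_ih}{h}-2\re\is{N_ih}{Nh}+\|Nh\|^2$, and then $\varphi(N_i)\to\varphi(N)$ strongly for every $\varphi\in C(X)$ by the continuous functional calculus. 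This yields hyperrigidity.

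The main obstacle is the strict-convexity step: deriving the reverse operator inequality $\int_X\rho^{2(p+q)}\,\D F\Le(N^*N)^{p+q}$ throughout the full range $p+q<2r$. A naive scalar Jensen inequality, interpolating $\rho^{2(p+q)}$ between $\rho^{0}$ and $\rho^{2r}$, only covers $p+q\Le r$, where $s\mapsto s^{(p+q)/r}$ is concave; in the genuinely delicate regime $r<p+q<2r$ one must instead run Brown's strictly convex net argument at the operator level, using the radial anchor $(r,r)$ together with the boundedness of the spectra, to force equality in Cauchy--Schwarz. Handling the points of $X$ near $0$, where the passage from $\bar z^p z^q$ and $|z|^{2r}$ to the coordinate functions degenerates, is the remaining technical point, to be dealt with using the generation hypothesis as encoded in Lemmas~\ref{winjkw} and \ref{cxduns}.
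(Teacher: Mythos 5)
Your overall architecture matches the paper's: reduce hyperrigidity to the rigidity statement for semispectral measures (the paper does this via Theorem~\ref{murzeqiv}(iii) and Stinespring rather than Lemma~\ref{prejyd} and Naimark, but these are interchangeable), use the Kadison--Schwarz inequality on the non-diagonal moment $(p,q)$ together with the radial anchor $(r,r)$, and invoke the generation hypothesis at the end. However, you have left the two hardest steps as acknowledged placeholders, and both are genuine gaps. First, the regime $r<p+q<2r$ is not handled: you say one ``must run Brown's strictly convex net argument at the operator level'' but do not produce it. The paper's actual solution (Case~1 of Theorem~\ref{main1}) is a specific, non-obvious application of the Lieb--Ruskai inequality with $A=U^{p-q}|N|^{p+q-r}$ and $B=|N|^{r}$, yielding $|T|^{2(p+q-r)}\Le P|N|^{2(p+q-r)}|_{\hh}$, followed by Hansen's inequality for the exponent $\frac{p+q-r}{r}\in(0,1)$ and Uchiyama's characterization of the equality case to force $P|N|=|N|P$. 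Without this (or an equivalent device) your reverse inequality $\int_X\rho^{2(p+q)}\,\D F\Le(N^*N)^{p+q}$ is simply unproved in the critical range, and the concavity argument you do sketch only covers $p+q\Le r$.

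Second, your Stone--Weierstrass finish does not close. You place only the two distinguished functions $\bar\xi^{p}\xi^{q}$ and $\bar\xi^{r}\xi^{r}$ in the multiplicative domain of $\Phi$; the multiplicative domain then contains $C^*(\bar\xi^{p}\xi^{q},\bar\xi^{r}\xi^{r})$, but this subalgebra need not separate the points of $X$ --- the hypothesis is that the \emph{whole} set $G$ generates $C(X)$, and in general the other monomials $\bar\xi^{m}\xi^{n}$, $(m,n)\in\varXi$, are indispensable for separation (this is exactly why Theorem~\ref{dsaqw}(ii) needs a geometric condition on $\sigma(t)$ when $\varXi$ has only two elements). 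To finish one must also put every $\bar\xi^{m}\xi^{n}$ with $(m,n)\in\varXi$ into the multiplicative domain; the paper does this by first upgrading the radial information to all diagonal moments (Lemma~\ref{nwer}), then applying Lemma~\ref{wurnik} to each non-diagonal pair to get that $P$ commutes with $U^{m-n}$, combining these via a B\'ezout/gcd argument, and finally passing from commutation with the family $\{N^{*m}N^{n}\}_{(m,n)\in\varXi_0}$ to commutation with $N$ itself through the product spectral measure and the injectivity of the joint symbol map $\phi$ of Lemma~\ref{winjkw}(iv) --- which is precisely where the generation hypothesis enters. Your remark that the degeneration near $0$ is ``the remaining technical point'' understates this: it is an essential chain of arguments, not a boundary case.
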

Theorem~\ref{hypth}, in particular, admits two
possibilities that appear in Theorem~\ref{dsaqw}
below: the first depends only on the exponents $(m,n)$
in the monomials $t^{*m} t^n$, while the second is
strongly influenced by the geometry of the spectrum
$\sigma(t)$ of~$t$. Hereafter, $\borel(X)$ denotes the
Borel $\sigma$-algebra of a topological Hausdorff
space $X$.
   \begin{theorem} \label{dsaqw}
Let $\ascr$ be a commutative unital $C^*$-algebra
generated by $t\in \ascr$, and let $\varXi \subseteq
\zbb_+^2$. Set $\varSigma=\{(p,q,r)\in \zbb_+^3\colon
p\neq q \text{ and } p+q<2r\}$. Suppose that one of
the following conditions holds:
   \begin{enumerate}
   \item[(i)] $(p,q), (r,r) \in \varXi$
for some $(p,q,r) \in \varSigma$
and\/\footnote{\label{fitupi}The abbreviation
``$\gcd$'' stands for ``the greatest common divisor``
(always assumed to be positive). It follows from the
well-ordering principle that if $J$ is a nonempty set
of integers, not all $0$, then $\gcd(J)$ exists and
there exists a finite nonempty subset $J_0$ of $J$
such that $\gcd(J) = \gcd (J_0)$. For simplicity,
writing $\gcd(J)$ means that $J$ contains a nonzero
integer.} \mbox{$\gcd \{m-n\colon (m,n)\in
\varXi\}=1$},
   \item[(ii)]
$\varXi=\{(p,q), (r,r)\}$ for some $(p,q,r) \in
\varSigma$, and there exists $\epsilon
> 0$ and disjoint sets $\{L_j\}_{j=0}^{n-1} \subseteq
\borel(\{e^{i \phi}\colon \phi \in
[0,\frac{2\pi}{n})\})$ with $n = |p-q|$ such that
   $$ \sigma(t)  \subseteq \bigcup_{j=0}^{n-1}
\Big\{\eta \omega e^{i\frac{2\pi j}{n}} \colon \eta
\in[\epsilon,\infty),\: \omega \in L_j\Big\}.
   $$
   \end{enumerate}
   Then $G:=\{t^{*m} t^n \colon (m,n) \in \varXi\}$
generates $\ascr$ and is hyperrigid in $\ascr$.
   \end{theorem}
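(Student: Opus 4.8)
The plan is to deduce hyperrigidity directly from Theorem~\ref{hypth}, so that the entire task reduces to verifying that $G$ generates $\ascr$. Indeed, in both cases~(i) and~(ii) we are handed a triple $(p,q,r)\in\varSigma$ with $(p,q),(r,r)\in\varXi$, which is precisely the arithmetic hypothesis of Theorem~\ref{hypth}; hence once $G$ is known to generate $\ascr$, that theorem yields hyperrigidity at once. To analyse generation I would pass through the Gelfand transform, identifying $\ascr$ with $C(\sigma(t))$ and $t$ with the coordinate function $z$, so that $t^{*m}t^{n}$ becomes $\overline{z}^{\,m}z^{n}$. By the Stone--Weierstrass theorem, $G$ generates $\ascr$ if and only if the unital $*$-subalgebra $\mathcal P\subseteq C(\sigma(t))$ generated by $\{\overline{z}^{\,m}z^{n}\colon (m,n)\in\varXi\}$ separates the points of $\sigma(t)$, and this is the statement I would prove in each case.

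The first, common, observation is that $\overline{z}^{\,r}z^{r}=|z|^{2r}\in\mathcal P$ separates any two points of $\sigma(t)$ of different modulus (in particular it separates $0$, if present, from every other point). Thus it remains to separate two distinct points $z_{1},z_{2}\in\sigma(t)$ of a common modulus $\eta>0$; writing $z_{k}=\eta e^{i\theta_{k}}$, a monomial $\overline{z}^{\,m}z^{n}$ restricted to the circle $|z|=\eta$ equals $\eta^{m+n}e^{i(n-m)\theta}$, so only its angular frequency $n-m$ matters. Since products of the generators and their adjoints add frequencies, the frequencies realised in $\mathcal P$ form the subgroup of $\zbb$ generated by $\{n-m\colon (m,n)\in\varXi\}$. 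In case~(i) this subgroup is all of $\zbb$, because $\gcd\{m-n\colon(m,n)\in\varXi\}=1$; using a B\'{e}zout relation $\sum_{i}a_{i}(n_{i}-m_{i})=1$ I would assemble a product of generators (for $a_{i}>0$) and adjoints (for $a_{i}<0$) equal to $\overline{z}^{\,M}z^{M+1}=|z|^{2M}z\in\mathcal P$ for some $M\ge 0$. This function takes the values $\eta^{2M+1}e^{i\theta_{k}}$ at $z_{k}$, and since $\theta\mapsto e^{i\theta}$ is injective on a period, it separates $z_{1}$ and $z_{2}$, completing case~(i).

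In case~(ii) we have $\varXi=\{(p,q),(r,r)\}$, so the available frequency subgroup is only $n\zbb$ with $n=|p-q|$, and $\overline{z}^{\,p}z^{q}$ contributes the frequency $q-p=\pm n$; on the circle $|z|=\eta$ it equals $\eta^{p+q}e^{\pm in\theta}$. The purely angular functions in $\mathcal P$ therefore cannot distinguish angles differing by a multiple of $\tfrac{2\pi}{n}$, so the whole burden falls on the geometric hypothesis, and this is the main obstacle. I would argue that the covering condition forces the map $z\mapsto (z/|z|)^{n}=e^{in\theta}$ to be injective on each modulus-slice $\sigma(t)\cap\{|z|=\eta\}$: if two points of that slice had the same image, their angles would differ by a multiple of $\tfrac{2\pi}{n}$, hence they would share the same reduced angle $\omega=e^{i\phi}$ with $\phi\in[0,\tfrac{2\pi}{n})$; writing each as $\eta\,\omega\,e^{i2\pi j/n}$ would place $\omega$ simultaneously in two of the sets $L_{j}$, contradicting their disjointness unless the two points coincide. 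Granting this injectivity, $\overline{z}^{\,p}z^{q}$ separates any two distinct same-modulus points (its modulus factor $\eta^{p+q}$ is common and positive, while $e^{\pm in\theta}$ already distinguishes them), so $\mathcal P$ separates points and $G$ generates $\ascr$. In both cases Theorem~\ref{hypth} then delivers hyperrigidity.
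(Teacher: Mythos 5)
Your proposal is correct and follows essentially the same route as the paper: both reduce the theorem to Theorem~\ref{hypth} once generation is established, and both establish generation via Stone--Weierstrass by separating same-modulus points (the paper delegates this to Lemmas~\ref{winjkw} and~\ref{cxduns}). The only cosmetic difference is in case~(i), where you build an explicit separating function $|z|^{2M}z$ from a B\'ezout relation, while the paper's Lemma~\ref{cxduns}(i) argues dually via the identity $\bigcap_{k\in J}\mathfrak{G}_k=\mathfrak{G}_{\gcd(J)}$ applied to $\sgn(z_1)/\sgn(z_2)$.
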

In Theorems~\ref{hypth} and~\ref{dsaqw} there appear
at least two pairs of exponents, namely a non-diagonal
pair $(p,q)$ and a diagonal pair $(r,r)$, such that
$p+q<2r$. This raises two questions. The first is why
the condition $p+q<2r$ is required; this is discussed
in Remark~\ref{ytsq}. The second is why the presence
of a diagonal pair $(r,r)$ is necessary. As shown in
Section~\ref{Sec.9} (see Proposition~\ref{isnytos}),
if the diagonal pair $(r,r)$ is missing from $\varXi$
and $\varXi$ contains at least two distinct
non-diagonal pairs for which the corresponding set $G$
generates $C(X)$, then $G$ need not be hyperrigid in
$C(X)$.

The proofs of Theorems~\ref{hypth} and \ref{dsaqw} are
spread across Sections~\ref{Sec.6v} and~\ref{Sec.6w}.
   \section{Topological characterization of hyperrigidity}
The characterization of hyperrigidity given in this
section is inspired by the theorems of Brown and
Kleski \cite{Kle14,Brown16} and focuses on weak and
strong convergence of sequences of normal or subnormal
operators. It is also closely related to three
classical results in operator theory, namely Kadison's
theorem (see \cite[Theorem~4.2]{Ka68}), Bishop's
theorem, and the Conway-Hadwin theorem (see
\cite[Theorem~II.1.17]{Con91} and \cite{CoHad83}; for
more details, see~Appendix~\ref{AppC}).

If $X$ is a compact Hausdorff space, then $C(X)$
denotes the $C^*$-algebra of all continuous complex
functions on $X$, equipped with the supremum norm. The
functional calculus for subnormal operators is
discussed in Section~\ref{Sec.2}. We write $\wwlim$
and $\sslim$ for limits in the {\em weak} and the {\em
strong operator topologies},~respectively.
   \begin{theorem} \label{jydgunr}
Let $X$ be a nonempty compact subset of $\cbb$ and $G$
be a set of generators of $C(X)$. Then the following
conditions are equivalent{\em :}
   \begin{enumerate}
   \item[(i)] $G$ is hyperrigid,
   \item[(ii)]
for every Hilbert space $\hh$, every sequence
$\{T_n\}_{n=1}^{\infty} \subseteq \bou(\hh)$ of
subnormal operators and every normal operator $T\in
\bou(\hh)$, all with spectrum in $X$,
   \begin{align} \label{cinpt}
\wwlim_{n\to \infty} f(T_n) = f(T) \;\; \forall f\in G
\implies \sslim_{n\to \infty} f(T_n) = f(T) \;\;
\forall f\in C(X),
   \end{align}
   \item[(iii)]
for every Hilbert space $\hh$ and for all normal
operators $T, T_n \in \bou(\hh)$ $($$n\in \natu$$)$
with spectrum in $X$, implication \eqref{cinpt} holds.
   \end{enumerate}
Moreover, conditions {\em (i)-(iii)} are still
equivalent regardless of whether the Hilbert spaces
considered in either of them are separable or not.
   \end{theorem}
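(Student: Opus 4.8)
The plan is to use the equivalence between hyperrigidity of a generating set and the \emph{unique extension property} (UEP): condition (i) holds if and only if every representation $\rho$ of $C(X)$ has the UEP relative to $G$, meaning that whenever $\psi\colon C(X)\to\bou(\hh)$ is UCP with $\psi(g)=\rho(g)$ for all $g\in G$, then $\psi=\rho$. In the countably generated case this is due to Arveson; the form valid without separability or countability restrictions is the one recorded in Appendix~\ref{App.B}, and it also underlies the final ``moreover'' clause. With this in hand I would prove the cycle (i)$\Rightarrow$(ii)$\Rightarrow$(iii)$\Rightarrow$(i). The implication (ii)$\Rightarrow$(iii) is immediate, since a normal operator is its own minimal normal extension, so that its subnormal functional calculus reduces to the continuous one and (iii) is just the restriction of (ii) to normal $T_n$.

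The analytic engine common to both nontrivial implications is a Schwarz-type weak-to-strong upgrade in the spirit of Kadison and Brown: if $\phi_n\colon C(X)\to\bou(\hh)$ are UCP, $\rho$ is a representation on $\hh$, and $\wwlim_n\phi_n(f)=\rho(f)$ together with $\wwlim_n\phi_n(f^*f)=\rho(f^*f)$ for some $f$, then $\sslim_n\phi_n(f)=\rho(f)$. This follows from the Kadison--Schwarz inequality $\phi_n(f^*f)\Ge\phi_n(f)^*\phi_n(f)$ and weak lower semicontinuity of the norm, which pin down $\|\phi_n(f)\xi\|\to\|\rho(f)\xi\|$ for every $\xi\in\hh$ and hence force strong convergence. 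For (i)$\Rightarrow$(ii) I would then argue as follows. Given subnormal $T_n$ with minimal normal extension $M_n$ on $\kk_n\supseteq\hh$ and $\sigma(M_n)\subseteq\sigma(T_n)\subseteq X$, the maps $\psi_n(f)=f(T_n)=P_{\hh}f(M_n)|_{\hh}$ are UCP, while $\rho(f)=f(T)$ is a representation because $T$ is normal. Any cluster point of $\{\psi_n\}$ in the point-weak topology is a UCP map agreeing with $\rho$ on $G$, hence equals $\rho$ by the UEP; since the UCP maps into $\bou(\hh)$ form a point-weak compact set, uniqueness of the cluster point yields $\wwlim_n\psi_n(f)=\rho(f)$ for all $f\in C(X)$. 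Applying the engine with $f$ and $f^*f$ then promotes this to $\sslim_n\psi_n(f)=\rho(f)$ for every $f$, which is precisely (ii).

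The heart of the proof is (iii)$\Rightarrow$(i), where I would verify the UEP using only normal operators. Let $\rho$ be a representation of $C(X)$ on $\hh$ with $\rho(z)=R$ normal and $\sigma(R)\subseteq X$, and let $\psi$ be UCP with $\psi|_G=\rho|_G$. Taking a minimal Stinespring dilation gives a normal $N$ on $\kk\supseteq\hh$ with $\psi(f)=P_{\hh}f(N)|_{\hh}$ and $\sigma(N)\subseteq X$. The idea is to freeze this compression while manufacturing weak convergence: after adjoining a normal summand of infinite multiplicity with spectrum in $X$, I would choose unitaries $U_n$ fixing $\hh$ pointwise and acting as a wandering (shift-type) unitary on the orthogonal complement, so that the conjugates $M_n=U_n^*NU_n$ (extended by the adjoined summand) are normal with spectrum in $X$, satisfy $P_{\hh}f(M_n)|_{\hh}=\psi(f)$ for all $n$, and satisfy $\wwlim_n f(M_n)=f(T)$ for a normal operator $T=R\oplus D$ in which $\hh$ is reducing with $T|_{\hh}=R$. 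On the generating set $G$ the equality $\psi|_G=\rho|_G$ makes the $\hh$-block of this weak limit match $f(T)$, so the hypothesis of (iii) is met; note that off $G$ it generally fails, which is exactly why (iii) is needed. Invoking (iii) yields $\sslim_n f(M_n)=f(T)$, hence weak convergence for all $f\in C(X)$; compressing to $\hh$ and using that the compression is the constant $\psi(f)$ forces $\psi(f)=\rho(f)$ for every $f$, i.e.\ the UEP holds.

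The main obstacle is precisely the construction of the wandering unitaries in (iii)$\Rightarrow$(i): one must arrange the off-diagonal blocks of $f(M_n)$ (between $\hh$ and its complement) to vanish weakly and the complementary block to converge to a genuine normal functional calculus, simultaneously for all $f\in C(X)$. I expect to handle this by first securing weak convergence on a countable dense subset of $C(X)$ via a diagonal argument (using separability of $C(X)$, as $X\subseteq\cbb$ is compact metric) and then propagating to all of $C(X)$ by uniform boundedness, with the adjoined infinite-multiplicity summand providing the room needed for the shift. Finally, the ``moreover'' clause follows by localizing strong convergence at each vector to the separable reducing subspace it generates under the countably many operators at hand, and conversely restricting a general instance to such a subspace; separability of $C(X)$ makes both reductions legitimate and shows the three conditions are insensitive to the cardinality of $\hh$.
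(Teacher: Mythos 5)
Your overall architecture is the paper's: reduce (i) to the unique extension property (Lemma~\ref{prejyd}/Theorem~\ref{murzeqiv}), prove (i)$\Rightarrow$(ii) by a point--weak compactness/cluster-point argument plus the Kadison--Schwarz weak-to-strong upgrade (this is exactly Kleski's argument, which the paper invokes as Theorem~\ref{murzeqiv}(v) together with Theorem~\ref{soptw}), note that (ii)$\Rightarrow$(iii) is trivial, and prove (iii)$\Rightarrow$(i) by conjugating the Stinespring/Naimark dilation $N$ so that the compression to $\hh$ stays frozen at $\psi(f)$ while the conjugates converge weakly. All of that is sound.

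The gap is in the key construction for (iii)$\Rightarrow$(i). If $U_n=I_{\hh}\oplus W_n$ with $W_n$ unitary on $\kk\ominus\hh$, then in block form $f(M_n)=U_n^*f(N)U_n$ has lower-right corner $W_n^*D_fW_n$, where $D_f$ is the $(\kk\ominus\hh)$-corner of $f(N)$. To invoke (iii) you must exhibit a \emph{single normal} $T$ with $\sigma(T)\subseteq X$ such that $\wwlim_n f(M_n)=f(T)$ for every $f\in G$; hence you need $W_n^*D_fW_n$ to converge weakly to $f(D)$ for one fixed normal $D$ with spectrum in $X$, simultaneously for all $f\in G$. Unitary conjugation cannot deliver this: $W_n^*D_fW_n$ is unitarily equivalent to $D_f$ for every $n$, and your proposed fix (a diagonal argument over a countable dense set plus uniform boundedness) only produces \emph{subsequential} weak limits of unknown form -- there is no mechanism forcing the limit to be a normal functional calculus, and (iii) concerns the given sequence, not a subsequence. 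The paper resolves exactly this point by replacing unitaries with powers of the isometry $V=I_{\hh}\oplus S$ ($S$ a completely non-unitary isometry) and refilling the co-range: $T_n=V^nNV^{*n}+\lambda(I-V^nV^{*n})$ with $\lambda\in X$. Then $T_n$ is still normal with spectrum in $X$, the compression to $\hh$ is still frozen, and since $\sslim_{n\to\infty}V^{*n}=P$ one gets $\wwlim_{n\to\infty}f(T_n)=Pf(N)P+f(\lambda)(I-P)$ for \emph{all} $f\in C(X)$ at once, which on $G$ equals $f(T\oplus\lambda I_{\kk\ominus\hh})$ -- an explicit normal limit operator. Without this (or an equivalent device), your implication (iii)$\Rightarrow$(i) does not go through.
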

In Section~\ref{Sec.9} (see Theorem~\ref{main2}),
using Choquet boundary techniques, we examine the
negation of condition~(iii) of Theorem~\ref{jydgunr},
as it arises when $G$ is not hyperrigid. The proof of
Theorem~\ref{jydgunr} can be found in
Section~\ref{Sec.6}.
   \section{Applications of Theorems~\ref{hypth}
and~\ref{jydgunr}}
   We begin with a characterization of spectral
measures on the complex plane in the spirit of
\eqref{fspyr} (cf.\ \cite{P-S21} and \cite{P-S22-b}).
We refer to Example~\ref{isuszn}, which illustrates
the importance of the normality of $T$.
   \begin{theorem} \label{specth}
Let $T\in \ogr(\hh)$ be a normal operator on a Hilbert
space $\hh$, let $F\colon \borel(\cbb) \to \ogr(\hh)$
be a semispectral measure with compact support, and
let $\varXi \subseteq \zbb_+^2$ be as in Theorem~{\em
\ref{dsaqw}(i)}. Then the following conditions are
equivalent{\em :}
   \begin{enumerate}
   \item[(i)] $F$ is the spectral measure of $T$,
   \item[(ii)] $T^{*m}T^{n}=\int_{\cbb} \bar{z}^{m} z^{n} F(\D z)$
for all $(m,n)\in \varXi$.
   \end{enumerate}
   \end{theorem}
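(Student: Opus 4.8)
The plan is to prove Theorem~\ref{specth} by reducing it to the established characterization of hyperrigidity in Theorem~\ref{jydgunr}, exploiting the fact that the exponent set $\varXi$ from Theorem~\ref{dsaqw}(i) already guarantees that $G=\{z^{*m}z^n\colon(m,n)\in\varXi\}$ generates $C(X)$ and is hyperrigid. The implication (i)$\Rightarrow$(ii) is the routine direction: if $F$ is the spectral measure $E$ of the normal operator $T$, then by the spectral theorem $T^{*m}T^n=\int_{\cbb}\bar z^m z^n\,E(\D z)$ for every $(m,n)$, so (ii) holds in particular on $\varXi$. The substance lies entirely in (ii)$\Rightarrow$(i).

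For (ii)$\Rightarrow$(i), first I would fix $X$ to be a compact set containing both $\sigma(T)$ and $\supp F$, so that all relevant functional calculus takes place inside $C(X)$. The strategy is to manufacture a sequence of normal operators converging weakly to $T$ and then invoke hyperrigidity to upgrade this to strong convergence on all of $C(X)$, which will force $F$ to coincide with the spectral measure of $T$. Concretely, I would use the semispectral measure $F$ to build a \emph{Naimark dilation}: there is a Hilbert space $\kk\supseteq\hh$, an orthogonal projection $P\colon\kk\to\hh$, and a spectral measure $E\colon\borel(\cbb)\to\ogr(\kk)$ such that $F(\cdot)=P E(\cdot)|_{\hh}$. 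Letting $N=\int_{\cbb} z\,E(\D z)$ be the normal operator on $\kk$ with spectral measure $E$, hypothesis (ii) translates into the moment conditions
\begin{align*}
P\, N^{*m}N^{n}|_{\hh}=T^{*m}T^{n}\qquad\text{for all }(m,n)\in\varXi.
\end{align*}
The key obstacle, and the heart of the proof, is to extract from these \emph{finitely many moment identities} a genuine weak-convergence statement to which Theorem~\ref{jydgunr} applies. The natural device is to consider, on the ambient space $\kk$ or on an $\ell^2$-sum of copies, a sequence $\{N_k\}$ of compressions or averaged dilations whose spectra lie in $X$, arranged so that $f(N_k)\to f(T)$ weakly for each $f\in G$ precisely because the $\varXi$-moments of $E$ compressed to $\hh$ match those of $T$; here one uses that $f\in G$ ranges exactly over the monomials indexed by $\varXi$.

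Once the weak-convergence hypothesis of \eqref{cinpt} is verified for all $f\in G$, hyperrigidity of $G$ (furnished by Theorem~\ref{dsaqw}(i)) yields $\sslim f(N_k)=f(T)$ for \emph{every} $f\in C(X)$. Specializing to the monomials $f(z)=\bar z^m z^n$ for arbitrary $(m,n)\in\zbb_+^2$ and tracking the compression back to $\hh$ gives $T^{*m}T^n=\int_{\cbb}\bar z^m z^n\,F(\D z)$ for all $(m,n)$, i.e.\ the full moment sequence of $F$ agrees with that of the spectral measure of $T$. Since $F$ has compact support and the polynomials in $z,\bar z$ are dense in $C(X)$ by the Stone--Weierstrass theorem, equality of all operator moments $\int f\,\D F=\int f\,\D E_T$ for polynomials $f$ extends to all $f\in C(X)$, and a standard uniqueness argument for operator-valued measures (two compactly supported semispectral measures with identical integrals against every continuous function coincide) forces $F=E_T$, the spectral measure of $T$. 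I expect the main obstacle to be the precise construction of the approximating sequence and the verification that its spectra remain in $X$ while its $G$-moments converge weakly to those of $T$; the normality of $T$ is essential here, as flagged by the reference to Example~\ref{isuszn}, since without it the target operator $f(T)$ is not even well defined through a normal functional calculus on $C(X)$.
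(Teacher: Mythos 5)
Your overall reduction is the right one --- fix a compact $X\supseteq \sigma(T)\cup\supp(F)$, note that Theorem~\ref{dsaqw}(i) makes $G=\{\bar\xi^m\xi^n\colon (m,n)\in\varXi\}$ a hyperrigid generating set of $C(X)$, and then convert hyperrigidity into the statement that $F$ must be spectral --- and your (i)$\Rightarrow$(ii) direction is fine. But the route you choose for (ii)$\Rightarrow$(i) goes through Theorem~\ref{jydgunr}, and the step you yourself flag as ``the main obstacle'' is a genuine gap: you never actually construct the sequence $\{N_k\}$ of normal operators with spectra in $X$ whose $G$-moments converge weakly to those of $T$. This is not a routine verification. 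In the paper it is exactly the content of the implication (iii)$\Rightarrow$(i) of Theorem~\ref{jydgunr}: one must pass to $\tilde T=T\oplus\lambda I_{\kk\ominus\hh}$ and to the operators $T_n=V^nNV^{*n}+\lambda(I-V^nV^{*n})$ built from a completely non-unitary isometry $S$ on $\kk\ominus\hh$ (after arranging $\dim(\kk\ominus\hh)\Ge\aleph_0$), so that $\wwlim_n f(T_n)=Pf(N)|_{\hh}\oplus f(\lambda)I$ and the $G$-hypothesis becomes a weak-convergence hypothesis on a \emph{single} Hilbert space. ``Compressions or averaged dilations'' is a wish, not a construction, and without it your argument does not close.

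The detour is also unnecessary. The paper proves Theorem~\ref{specth} in three lines by invoking Lemma~\ref{prejyd}(ii), which states precisely that hyperrigidity of a generating set $G$ of $C(X)$ is \emph{equivalent} to the implication ``$f(T)=\int_X f\,\D F$ for all $f\in G$ $\implies$ $F$ is the spectral measure of $T$''; that lemma is obtained not from sequences of operators but from the Bochner-type correspondence between UCP maps on $C(X)$ and regular semispectral measures (Theorem~\ref{soptw} and Corollary~\ref{toeq-1}), i.e., ultimately from the unique extension property in Theorem~\ref{murzeqiv}(iii) applied to $\pi(f)=f(T)$ and $\varPhi(f)=\int_X f\,\D F$. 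If you replace your sequence construction by this one application of the unique extension property (plus the standard uniqueness of regular operator-valued measures with equal integrals against $C(X)$, which you do state correctly at the end), your proof becomes complete and coincides with the paper's.
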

Applying Theorem~\ref{specth} to
$\varXi=\big\{(\frac{p-1}{2},\frac{p+1}{2}),
(\frac{q}{2},\frac{q}{2})\big\}$
gives~\cite[Theorem~1.7]{P-S22-b}.
   \begin{theorem}\label{sotwotth}
Let $X$ be a nonempty compact subset of $\cbb$, let
$\{T_k\}_{k=1}^{\infty} \subseteq \bou(\hh)$ be a
sequence of subnormal operators with spectrum in $X$,
and let $T\in \bou(\hh)$ be a normal operator with
spectrum in $X$. Let $\varXi\subseteq \zbb_+^2$.
Suppose that one of the conditions {\em (i)-(ii)} of
Theorem~{\em \ref{dsaqw}} holds with $X$ in place of
$\sigma(t)$, and that
   \begin{align*}
\wwlim_{k\to\infty} T_k^{*m}T_k^n & = T^{*m}T^n, \quad
(m,n)\in \varXi.
   \end{align*}
Then
   \begin{align} \label{dycnim}
\sslim_{k\to \infty} f(T_k) = f(T), \quad f\in C(X).
   \end{align}
   \end{theorem}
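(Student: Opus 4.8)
The plan is to recognize $C(X)$ as a singly generated commutative unital $C^*$-algebra and then chain together Theorems~\ref{dsaqw} and~\ref{jydgunr}. First I would put $\ascr=C(X)$ and let $t\in\ascr$ be the coordinate function $z\mapsto z$ on $X$; then $\ascr$ is generated by $t$ and $\sigma(t)=X$, so that the monomial $t^{*m}t^{n}\in C(X)$ is exactly the function $\bar z^{m}z^{n}$. Under this identification the set $G$ occurring in the conclusion coincides with $\{\bar z^{m}z^{n}\colon (m,n)\in\varXi\}$, and the standing assumption that one of the conditions (i)--(ii) of Theorem~\ref{dsaqw} holds with $X$ in place of $\sigma(t)$ is precisely the hypothesis of Theorem~\ref{dsaqw} for this~$\ascr$.

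Theorem~\ref{dsaqw} then yields two conclusions simultaneously: $G$ generates $C(X)$, and $G$ is hyperrigid in $C(X)$. The former is exactly what licenses an application of Theorem~\ref{jydgunr}, whose equivalence (i)$\Leftrightarrow$(ii) converts the abstract hyperrigidity of $G$ into the operator-theoretic implication~\eqref{cinpt}: for the given sequence $\{T_k\}$ of subnormal operators and the normal operator $T$, all with spectrum in $X$, weak convergence $\wwlim_{k}f(T_k)=f(T)$ for every $f\in G$ entails strong convergence $\sslim_{k}f(T_k)=f(T)$ for every $f\in C(X)$.

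It remains to check that the displayed weak-convergence hypothesis is literally the premise of~\eqref{cinpt}. For this I would use the functional calculus for subnormal operators from Section~\ref{Sec.2}: if $S$ is subnormal with minimal normal extension $N$ on $\kk\supseteq\hh$ and $f=\bar z^{m}z^{n}$, then for $h,k\in\hh$,
\[
\langle f(S)h,k\rangle=\langle f(N)h,k\rangle=\langle N^{n}h,N^{m}k\rangle=\langle S^{n}h,S^{m}k\rangle=\langle S^{*m}S^{n}h,k\rangle,
\]
where the first equality records that $f(S)$ is the compression of $f(N)$ to $\hh$ and the third uses $N^{j}h=S^{j}h$ for $h\in\hh$ (invariance of $\hh$ under $N$); thus $f(S)=S^{*m}S^{n}$, and the same identity holds trivially for the normal $T$. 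Consequently $\wwlim_{k}T_k^{*m}T_k^{n}=T^{*m}T^{n}$ for $(m,n)\in\varXi$ says exactly that $\wwlim_{k}f(T_k)=f(T)$ for all $f\in G$, and~\eqref{cinpt} then delivers~\eqref{dycnim}.

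I expect no genuine analytic obstacle, as the argument is merely a composition of two theorems already proved. The single point requiring care is the identification in the preceding paragraph: one must confirm that the (compression) functional calculus for subnormal operators sends the generator $\bar z^{m}z^{n}$ to the operator moment $S^{*m}S^{n}$, so that the hypothesis phrased in terms of moments coincides with the premise of~\eqref{cinpt} phrased in terms of~$G$. Once this identification is secured, Theorems~\ref{dsaqw} and~\ref{jydgunr} combine to give the statement at once.
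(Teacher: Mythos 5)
Your proposal is correct and follows essentially the same route as the paper: apply Theorem~\ref{dsaqw} with $\ascr=C(X)$ and $t=\xi$ (via Stone--Weierstrass) to get that $G$ generates $C(X)$ and is hyperrigid, then invoke (i)$\Rightarrow$(ii) of Theorem~\ref{jydgunr}, with the identification $f(S)=S^{*m}S^{n}$ for $f=\bar\xi^{m}\xi^{n}$ being exactly the paper's equation~\eqref{add-2}. No substantive differences.
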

Removing the condition that enforces a specific
location of the spectrum from Theorem~\ref{sotwotth}
leads to the following result.
   \begin{theorem}
\label{wszb} Let $\{T_k\}_{k=1}^{\infty} \subseteq
\bou(\hh)$ be a sequence of subnormal operators, and
let $T\in \bou(\hh)$ be a normal operator. Suppose
that $\varXi\subseteq \zbb_+^2$ is as in Theorem~{\em
\ref{dsaqw}(i)}~and that
   \begin{align*}
\wwlim_{k\to\infty} T_k^{*m}T_k^n = T^{*m}T^n, \quad
(m,n)\in \varXi.
   \end{align*}
Then
   \begin{align} \label{woso}
\sslim_{k\to\infty} T_k^{*m}T_k^n= T^{*m}T^n, \quad
(m,n)\in \zbb_+^2.
   \end{align}
   \end{theorem}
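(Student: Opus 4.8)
The plan is to deduce Theorem~\ref{wszb} from Theorem~\ref{sotwotth} by manufacturing a single compact set $X \subseteq \cbb$ that contains the spectra of all the operators at once; the only substantive work is producing this $X$, after which the spectral hypothesis of Theorem~\ref{sotwotth} becomes automatic because condition (i) of Theorem~\ref{dsaqw} refers to $\varXi$ alone and not to the spectrum.

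First I would extract a uniform norm bound on the sequence $\{T_k\}$. Since $\varXi$ is as in Theorem~\ref{dsaqw}(i), we have $(r,r)\in\varXi$ for some $(p,q,r)\in\varSigma$, so the hypothesis yields $\wwlim_{k\to\infty} T_k^{*r}T_k^r = T^{*r}T^r$. Testing against a single vector $x\in\hh$ gives $\|T_k^r x\|^2 = \is{T_k^{*r}T_k^r x}{x} \to \is{T^{*r}T^r x}{x} = \|T^r x\|^2$, whence $\sup_k\|T_k^r x\|<\infty$ for each $x$. The uniform boundedness principle then furnishes $M_r := \sup_k\|T_k^r\| < \infty$. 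Because each $T_k$ is subnormal it is normaloid and in fact satisfies $\|T_k^r\| = \|T_k\|^r$ (the spectral inclusion theorem gives that the norm of a subnormal operator equals its spectral radius, and the spectral mapping theorem gives that the spectral radius of the $r$-th power is the $r$-th power of the spectral radius). Hence $\|T_k\| \Le M_r^{1/r} =: M$ for every $k$, and passing to the limit in $\|T_k^r x\|\Le M_r\|x\|$ likewise gives $\|T^r x\|\Le M_r\|x\|$, so $\|T\|\Le M$ by normality.

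With $M$ at hand, set $X := \{z\in\cbb\colon |z|\Le M\}$, a nonempty compact subset of $\cbb$ containing $\sigma(T_k)$ for all $k$ as well as $\sigma(T)$. The assumption that $\varXi$ is as in Theorem~\ref{dsaqw}(i)---namely that $(p,q),(r,r)\in\varXi$ for some $(p,q,r)\in\varSigma$ and $\gcd\{m-n\colon (m,n)\in\varXi\}=1$---is a condition on $\varXi$ only and makes no reference to any spectrum, so it holds verbatim with $X$ in place of $\sigma(t)$. All remaining hypotheses of Theorem~\ref{sotwotth} are now in force: the $T_k$ are subnormal with spectrum in $X$, $T$ is normal with spectrum in $X$, and $\wwlim_{k\to\infty} T_k^{*m}T_k^n = T^{*m}T^n$ for $(m,n)\in\varXi$. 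Applying Theorem~\ref{sotwotth} gives $\sslim_{k\to\infty} f(T_k) = f(T)$ for every $f\in C(X)$.

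Finally, for each $(m,n)\in\zbb_+^2$ the function $f(z) = \bar{z}^m z^n$ lies in $C(X)$, and by the subnormal functional calculus recalled in Section~\ref{Sec.2} one has $f(T_k) = T_k^{*m}T_k^n$ and $f(T) = T^{*m}T^n$ (for $x,y\in\hh$ this is the identity $\is{T_k^{*m}T_k^n x}{y} = \is{T_k^n x}{T_k^m y} = \is{N_k^n x}{N_k^m y} = \is{N_k^{*m}N_k^n x}{y}$ for the minimal normal extension $N_k$). Specializing the previous display to this $f$ delivers exactly~\eqref{woso}. I expect the sole genuine obstacle to be the first step, since it is there that the diagonal exponent $(r,r)$ and subnormality (through $\|T_k^r\|=\|T_k\|^r$) are exploited to convert a purely weak hypothesis into a uniform spectral confinement; once the common compact $X$ is available, the remainder is a direct invocation of Theorem~\ref{sotwotth} followed by evaluation on the moment monomials.
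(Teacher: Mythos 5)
Your proposal is correct and follows essentially the same route as the paper: the uniform spectral confinement you derive via the uniform boundedness principle and the normaloid property of subnormal operators is exactly the content of the paper's Lemma~\ref{wydnu}, after which the paper likewise applies Theorem~\ref{sotwotth} with $X=\overline{\mathbb{D}}_R$ and evaluates on the monomials $\bar\xi^m\xi^n$ using \eqref{add-2}. The only difference is that you inline the lemma and the functional-calculus identity rather than citing them.
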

The proofs of Theorems~\ref{specth}, \ref{sotwotth}
and \ref{wszb} can be found in Section~\ref{Sec.7n}.
   \section{\label{Sec.2}Prerequisites}
In this paper, we use the following notation. The
fields of real and complex numbers are denoted by
$\rbb$ and $\mathbb{C}$, respectively. The symbols
$\zbb$, $\zbb_{+}$, $\natu$ and $\rbb_+$ stand for the
sets of integers, nonnegative integers, positive
integers and nonnegative real numbers, respectively.
Denote by $\delta_{m,n}$ the Kronecker delta. The
cardinality of a set $X$ is denoted by $\card(X)$. We
write $\borel(X)$ for the $\sigma$-algebra of all
Borel subsets of a topological Hausdorff space $X$. If
$\mu$ is a finite (positive) Borel measure on $\cbb$,
the closed support of $\mu$ is denoted by
$\supp(\mu)$.

Let $\ascr$ be a unital $C^*$-algebra with unit $e$.
Given a nonempty subset $G$ of $\ascr$ (not
necessarily containing $e$), we denote by $C^*(G)$ the
smallest unital $C^*$-subalgebra of $\ascr$ containing
$G$, or equivalently, the smallest (not necessarily
unital) $C^*$-subalgebra of $\ascr$ containing $G\cup
\{e\}$. If $t\in \ascr$, then we write
$C^*(t)=C^*(\{t\})$. In case $\ascr=C^*(G)$, we say
that $G$ {\em generates} $\ascr$, or that $G$ is a set
of {\em generators} of $\ascr$. The spectrum of an
element $t$ of $\ascr$ is denoted by $\sigma(t)$. We
say that $\mcal \subseteq \ascr$ is an {\em operator
system} if $\mcal$ is a vector subspace of $\ascr$
containing the unit and having the property that
$a^*\in \mcal$ for all $a\in \mcal$. If $\mcal$ is an
operator system which is separable with respect to the
norm topology, then $\mcal$ is called a {\em separable
operator system}. Note that if $ G\subseteq \ascr$ is
an at most countable set of generators of $\ascr$,
then the linear span $\mcal$ of $\{e\} \cup G \cup
G^*$ with $G^*:=\{a^*\colon a \in G\}$ is a separable
operator system generating $\ascr$. And {\em vice
versa}, if $\mcal$ is a separable operator system
generating $\ascr$, then there exists an at most
countable subset $G$ of $\mcal$ generating $\ascr$. A
unital $C^*$-algebra $\ascr$ is separable if and only
if $\ascr$ is generated by an at most countable set,
or equivalently, by a separable operator system. We
abbreviate the phrase ``unital completely positive
map'' to ``UCP map'', where ``unital'' means that the
map preserves units. If $\ascr$ is a commutative
unital $C^*$-algebra, we denote by
$\mathfrak{M}_{\ascr}$, $\borel(\mathfrak{M}_{\ascr})$
and $\widehat{a}\colon \mathfrak{M}_{\ascr} \to \cbb$
the maximal ideal space of $\ascr$, the
$\sigma$-algebra of Borel subsets of
$\mathfrak{M}_{\ascr}$ and the Gelfand transform of
$a\in \ascr$, respectively.

Let $X$ be a compact Hausdorff space. For $\lambda \in
X$, $\delta_{\lambda}$ denotes the {\em Dirac measure}
at $\lambda$, i.e. the Borel probability measure on
$X$ concentrated on $\{\lambda\}$. Suppose that
$\mathcal{M}$ is a subspace of $C(X)$ and that $1\in
\mathcal{M}$. Denote by $\mathcal{M}^{\prime}$ the
Banach space of all continuous linear functionals on
$\mathcal{M}$. The \textit{state space}
$S(\mathcal{M})$ of $\mathcal{M}$ is the set of all
$L$ in $\mathcal{M}^{\prime}$ such that $L(1) = 1 =
\|L\|$. Let $\partial_\mathcal{M}X$ be the set of all
$\lambda\in X$ for which $\delta_{\lambda}$ is an
extreme point of $S(\mathcal{M})$, where now
$\delta_{\lambda}$ is understood as the element of
$\mathcal{M}^{\prime}$ defined by
$\delta_{\lambda}(f)=f(\lambda)$ for $f\in
\mathcal{M}$ (in fact, $\delta_{\lambda}(f)=\int_X f
\D \delta_{\lambda}$). We call $\partial_\mathcal{M}X$
the \textit{Choquet boundary} for $\mathcal{M}$. We
also say that a regular Borel probability measure
$\mu$ on $X$ {\em represents} a point $\lambda\in X$
(relative to $\mcal$) if $f(\lambda)=\int_X f \D \mu$
for every $f\in \mcal$. The Dirac measure
$\delta_{\lambda}$ represents $\lambda$. In view of
\cite[Proposition~6.2]{Phe02}, if $\mcal$ separates
the points of $X$ and $1\in \mathcal{M}$, then a point
$\lambda\in X$ is in $\partial_{\mcal}X$ if and only
if $\mu=\delta_{\lambda}$ is the only measure
representing $\lambda$. For an overview of Choquet
theory, see~\cite{Phe02}.

If $Z$ is a subset of a vector space, $\lin Z$ denotes
the linear span of $Z$. Let $\hh$ and $\kk$ be Hilbert
spaces. Denote by $\ogr(\hh, \kk)$ the Banach space of
all bounded linear operators from $\hh$ to $\kk$
equipped with the operator norm. If $A\in
\ogr(\hh,\kk)$, then $A^*$, $\nul(A)$ and $\ran(A)$
stand for the adjoint, the kernel and the range of
$A$, respectively. The space
$\ogr(\hh):=\ogr(\hh,\hh)$ is a $C^*$-algebra with
unit $I=I_{\hh}$, the identity operator on $\hh$. We
write $\sigma(A)$ for the spectrum of $A \in
\ogr(\hh)$. We say that $A\in \ogr(\hh)$ is
\textit{normal} if $A^*A=AA^*$, \textit{selfadjoint}
if $A=A^*$ and \textit{positive} if $\langle
Ah,h\rangle \Ge 0$ for all $h\in \hh$. If $A\in
\ogr(\hh)$, we write $|A|:=(A^*A)^{1/2}$. Recall that
$\nul(A)=\nul(|A|)$. Every operator $A\in \ogr(\hh)$
has the {\em polar decomposition} $A=V|A|$, where $V$
is a (unique) partial isometry such that
$\nul(V)=\nul(A)$. If $N \in \ogr(\hh)$ is normal,
then there exists a unique $U\in \ogr(\hh)$ such that
(see \cite[Theorem~6, p.\ ~66]{Fur01})
   \begin{align} \label{pdu}
\text{$N=U |N|=|N|U$, \, $U$ is a unitary operator \,
and \, $U|_{\nul(N)}=I_{\nul(N)}$.}
   \end{align}
If \eqref{pdu} holds, then we say that $N=U |N|$ is
the \textit{unitary polar decomposition} of $N$.
Comparing both polar decompositions for a normal
operator $N$, we see that
$\overline{\ran(N)}=\overline{\ran(|N|)}$, $\tilde
V:=V|_{\overline{\ran(N)}}$ is unitary and
   \begin{align} \label{orotrh}
V=\tilde V \oplus 0, \quad U=\tilde V \oplus
I_{\nul(N)},
   \end{align}
where $0$ stands for the zero operator on $\nul(N)$.

We say that $S\in \ogr(\hh)$ is {\em subnormal} if
there exists a Hilbert space $\kk$ and a normal
operator $N\in \ogr(\kk)$, called a {\em normal
extension} of $S$, such that $\hh\subseteq \kk$
(isometric embedding) and $Sh=Nh$ for all $h\in \hh$.
If $\kk$ is the only closed subspace that reduces $N$
and contains $\hh$, we call $N$ a {\em minimal} normal
extension of~$S$.

Let $\mathfrak{A}$ be a $\sigma$-algebra of subsets of
a set $X$ and $F\colon \mathfrak{A} \to \ogr(\hh)$ be
a semispectral measure, that is, $\is{F(\cdot)f}{f}$
is a (positive) measure on $\mathfrak{A}$ for every $f
\in \hh$, and $F(X)=I$. Denote by $L^1(F)$ the vector
space of all $\mathfrak{A}$-measurable functions
$f\colon X \to \cbb$ such that $\int_{X} |f(x)|
\langle F(\D x)h, h\rangle < \infty$ for all $h\in
\hh$. Then for every $f\in L^1(F)$, there exists a
unique operator $\int_X f \D F \in \ogr(\hh)$ such
that (see, e.g., \cite[Appendix]{Sto92})
   \begin{align*}
\Big\langle\int_X f \D F h, h\Big\rangle =
\int_X f(x) \langle F(\D x)h, h\rangle,
\quad h\in\hh.
   \end{align*}
If $F$ is the spectral measure of a normal operator
$A\in \ogr(\hh)$, then we write $f(A)=\int_{\cbb} f \D
F$ for any $F$-essentially bounded Borel function
$f\colon \cbb \to \cbb$; the map $f \mapsto f(A)$ is
called the Stone-von Neumann calculus. For more
information needed in this article on spectral
integrals, including the spectral theorem for normal
operators and the Stone-von Neumann calculus, we refer
the reader to \cite{Rud73,Bir-Sol87}.

Recall the functional calculus for subnormal
operators. Suppose that $S\in \ogr(\hh)$ is a
subnormal operator and $N\in \ogr(\kk)$ is its minimal
normal extension. Let $E\colon \borel(\cbb) \to
\ogr(\kk)$ be the spectral measure of $N$, and $P\in
\ogr(\kk)$ be the orthogonal projection of $\kk$ onto
$\hh$. The semispectral measure $F\colon \borel(\cbb)
\to \ogr(\hh)$ defined by $F(\varDelta) =
PE(\varDelta)|_{\hh}$ for $\varDelta \in\borel(\cbb)$,
is called the {\em semispectral measure} of $S$. By
\cite[Proposition~5]{Ju-St08} and
\cite[Proposition~II.2.5]{Con91}, the definition of
$F$ does not depend on the choice of $N$. For compact
$X \subseteq \cbb$ with\footnote{\,As usual,
$\supp(F)$ denotes the closed support of $F$.}
$\supp(F) \subseteq X$ and $f \in C(X)$, we set
   \begin{align} \label{fysk}
f(S)=\int_{X} f \D F.
   \end{align}
Note that $\supp(F)=\supp(E)=\sigma(N)$ (see
\cite[Proposition~4(iii)]{Ju-St08}; see also
\cite[Theorem~4.4]{Ja02}). Since $\sigma(N)\subseteq
\sigma(S)$ (see \cite[Theorem~II.2.11]{Con91}), we
deduce that \eqref{fysk} makes sense for every $f \in
C(X)$ whenever $X\subseteq \cbb$ is compact and
$\sigma(S) \subseteq X$. In case $F$ is a spectral
measure (and consequently $S$ is normal), the map $f
\mapsto f(S)$ coincides with the Stone-von Neumann
calculus. Since $S^n=N^n|_{\hh}$ and
$S^{*n}=PN^{*n}|_{\hh}$ for all $n\in \zbb_+$, an
application of the Stone-von Neumann calculus yields
   \begin{equation} \label{add-2}
S^{*m}S^n = PN^{*m}N^n|_{\hh} = \int_{X}
\bar z^m z^{n} F(\D z) = (\bar\xi^m
\xi^n)(S), \quad m,n \in \zbb_+,
   \end{equation}
where $\xi\in C(X)$ is defined by
   \begin{align} \label{ksik}
\xi(z) = z, \quad z \in X.
   \end{align}
   \begin{lemma} \label{rudec}
Let $T\in \ogr(\hh)$ and let $P \in
\ogr(\hh)$ be an orthogonal projection. Then
the following conditions are
equivalent{\em:}
   \begin{itemize}
   \item[(i)]
$(PT^*P)(PTP) = PT^*TP$ and $(PTP)(PT^*P) =
PTT^*P$,
   \item[(ii)] $PT=TP$,
   \item[(iii)] $\ran(P)$ reduces $T$.
   \end{itemize}
Moreover, if $T$ and $PTP$ are normal, then
{\em (ii)} is equivalent to
   \begin{itemize}
\item[(iv)] $(PT^*P)(PTP) = PT^*TP$.
   \end{itemize}
   \end{lemma}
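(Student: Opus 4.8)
The plan is to run the cycle (ii)$\Leftrightarrow$(iii), (ii)$\Rightarrow$(i), (i)$\Rightarrow$(ii), and then to treat the ``moreover'' clause separately. Throughout I set $Q:=I-P$ and rely on one elementary observation: for every $S\in \ogr(\hh)$ the operators $PSP$ and $QSP$ have ranges contained in the orthogonal subspaces $\ran(P)$ and $\ran(Q)$, so that $(PSP)^*(QSP)=PS^*P\,QSP=0$. The equivalence (ii)$\Leftrightarrow$(iii) is the standard fact that a bounded operator commutes with an orthogonal projection $P$ if and only if $\ran(P)$ reduces it: $PT=TP$ forces $PTQ=TPQ=0$ and $QTP=QPT=0$, i.e.\ both $\ran(P)$ and $\nul(P)=\ran(Q)$ are $T$-invariant, and conversely $T$-invariance of $\ran(P)$ and $\ran(Q)$ gives $PT=PTP=TP$.

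For (ii)$\Rightarrow$(i) I would simply note that $PT=TP$ implies $PT^*=T^*P$, whence $PTP=TP$ and $PT^*P=T^*P$; substituting these into the two products of (i) collapses each of them to $T^*TP$ and $TT^*P$ respectively, and the same substitution shows $PT^*TP=T^*TP$ and $PTT^*P=TT^*P$. This is a one-line computation. The key implication is (i)$\Rightarrow$(ii). Here I would write $TP=PTP+QTP$; since the two summands have orthogonal ranges, the cross terms vanish and $(TP)^*(TP)=(PTP)^*(PTP)+(QTP)^*(QTP)$, that is, $PT^*TP=(PT^*P)(PTP)+(QTP)^*(QTP)$. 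The first identity of (i) thus forces $(QTP)^*(QTP)=0$, hence $QTP=0$, i.e.\ $TP=PTP$ and $\ran(P)$ is $T$-invariant. Applying the same decomposition to $T^*$, writing $PTT^*P=(T^*P)^*(T^*P)$ and $T^*P=PT^*P+QT^*P$, the second identity of (i) forces $QT^*P=0$, so $\ran(P)$ is $T^*$-invariant as well. Invariance of $\ran(P)$ under both $T$ and $T^*$ is precisely (iii), which yields (ii).

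For the ``moreover'' clause, (i)$\Rightarrow$(iv) is trivial, so only (iv)$\Rightarrow$(ii) needs the normality hypotheses. Exactly as above, (iv) gives $QTP=0$, i.e.\ $TP=PTP$, so $\ran(P)$ is $T$-invariant; taking adjoints gives $PT^*=PT^*P$. The remaining task -- and the main obstacle in this part -- is to upgrade mere $T$-invariance to reducibility, i.e.\ to deduce $PTQ=0$ (equivalently $QT^*P=0$), which no longer follows from (iv) alone. For this I would test normality on $\ran(P)$. For $x\in\ran(P)$ one has $Tx=TPx=(PTP)x\in\ran(P)$, so $\|Tx\|^2=\|(PTP)x\|^2$, while the orthogonal splitting $T^*x=(PT^*P)x+QT^*x$ gives $\|T^*x\|^2=\|(PT^*P)x\|^2+\|QT^*x\|^2$. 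Normality of $T$ equates $\|Tx\|=\|T^*x\|$, and normality of $PTP$ equates $\|(PTP)x\|=\|(PT^*P)x\|$, since $\|(PTP)x\|^2=\is{(PT^*P)(PTP)x}{x}=\is{(PTP)(PT^*P)x}{x}=\|(PT^*P)x\|^2$. Subtracting these leaves $\|QT^*x\|=0$ for every $x\in\ran(P)$, hence $QT^*P=0$; combined with $QTP=0$ this gives $PT=PTP=TP$, which is (ii). I expect the bookkeeping of the two normality cancellations to be the only delicate point, the rest being routine.
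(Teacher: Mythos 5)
Your proof is correct, and for the main equivalence it is essentially the paper's argument: the authors also dismiss (ii)$\Leftrightarrow$(iii) and (ii)$\Rightarrow$(i) as obvious, and they reduce (i)$\Rightarrow$(ii) to a citation of \cite[Lemma~3.2]{P-S-roots23}, whose content is exactly your orthogonal-splitting identity $PT^*TP=(PT^*P)(PTP)+(QTP)^*(QTP)$ with $Q=I-P$, forcing $QTP=0$, together with the analogous identity for $T^*$. Where you genuinely add something is the ``moreover'' clause: the paper's written proof does not address (iv)$\Rightarrow$(ii) at all, whereas you supply the standard argument that an invariant subspace of a normal operator on which the compression is normal must be reducing. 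Your bookkeeping there is sound: for $x\in\ran(P)$ one has $Tx=(PTP)x$, normality of $PTP$ gives $\|(PTP)x\|=\|(PT^*P)x\|$, normality of $T$ gives $\|Tx\|=\|T^*x\|$, and the Pythagorean decomposition $\|T^*x\|^2=\|(PT^*P)x\|^2+\|QT^*x\|^2$ then forces $QT^*P=0$, which combined with $QTP=0$ yields $PT=TP$. So your write-up is self-contained and in fact more complete than the one in the paper, at the cost of reproving by hand the computation the authors outsource to their earlier lemma.
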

   \begin{proof}
Equivalence (ii)$\Leftrightarrow$(iii) and implication
(ii)$\Rightarrow$(i) are obvious.

(i)$\Rightarrow$(ii) By \cite[Lemma~3.2]{P-S-roots23},
$TP=PTP$ and $T^*P=PT^*P$, so $PT=TP$.
   \end{proof}
Let $J\subseteq \rbb$ be an interval (open, half-open,
or closed; bounded or unbounded). A continuous
function $f \colon J \rightarrow \rbb$ is said to be
\textit{operator monotone} if $f(A)\Le f(B)$ for any
two selfadjoint operators $A,B\in\ogr(\hh)$ such that
$A\Le B$ and the spectra of $A$ and $B$ are contained
in $J$ (see \cite{Sim19}). The most important example
of an operator monotone function is $f\colon
[0,\infty)\ni t\rightarrow t^p\in\rbb$, where $p\in
(0,1)$. Operator monotone functions are related to the
Hansen inequality \cite{Han80}. In
\cite[Lemma~2.2]{uch93} (see also \cite[p.\
6]{P-S21}), Uchiyama gave a necessary and sufficient
condition for equality to hold in the Hansen
inequality when the external factor is a nontrivial
orthogonal~projection.
   \begin{theorem}[\cite{Han80,uch93}]
\label{hans} Let $A\in \ogr(\hh)$ be a
positive operator, $T \in \ogr(\hh)$ be a
contraction and $f\colon
[0,\infty)\rightarrow \rbb$ be a continuous
operator monotone function such that
$f(0)\Ge 0$. Then
   \begin{equation} \label{Han-inq}
T^*f(A)T \Le f(T^*AT).
   \end{equation}
Moreover, if $f$ is not an affine function
and $T$ is an orthogonal projection such
that $T \neq I$, then equality holds in
\eqref{Han-inq} if and only if $TA=AT$ and
$f(0)=0$.
   \end{theorem}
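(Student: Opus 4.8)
The plan is to base everything on Löwner's integral representation and reduce every assertion to a single resolvent computation. Recall (see \cite{Sim19}) that a continuous operator monotone $f$ on $[0,\infty)$ is operator concave and admits
\[
f(t) = f(0) + \beta t + \int_{(0,\infty)} \frac{s\,t}{s+t}\, d\mu(s), \qquad t\Ge 0,
\]
with $\beta\Ge 0$ and a positive measure $\mu$, being affine precisely when $\mu=0$; each building block $g_s(t)=\frac{st}{s+t}=s-s^2(s+t)^{-1}$ vanishes at $0$. For the inequality \eqref{Han-inq} I would first reduce a general contraction $T$ to an isometry by the standard unitary dilation $U$ on $\hh\oplus\hh$ whose upper-left corner is $T$: writing $\tilde A = A\oplus 0$ and $M=U^*\tilde A U$, the embedding $V\colon \hh\to\hh\oplus\hh$, $Vh=(h,0)$, satisfies $V^*MV=T^*AT$ and $V^*f(M)V = T^*f(A)T + f(0)(I-T^*T)$. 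So it suffices to prove the isometry (Jensen) inequality $V^*f(M)V\Le f(V^*MV)$; since the affine part $f(0)+\beta t$ is reproduced exactly by the compression $V^*\cdot V$, this reduces term-by-term to $V^*(s+M)^{-1}V \Ge (s+V^*MV)^{-1}$, which is immediate from the block (Schur complement) formula for the corner of a resolvent. Combined with $f(0)\Ge 0$ and $I-T^*T\Ge 0$, this yields \eqref{Han-inq}.

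For the equality statement write $P:=T$ and let $V\colon \ran P\to\hh$ be the inclusion isometry, so $PAP = V A_0 V^*$ with $A_0:=V^*AV$ and $f(PAP)=V f(A_0)V^* + f(0)(I-P)$. The direction $(\Leftarrow)$ is a direct computation: if $PA=AP$ then $\ran P$ reduces $A$, so $Pf(A)P = V f(A_0) V^*$, and with $f(0)=0$ the $(I-P)$-summand of $f(PAP)$ vanishes, giving equality. For $(\Rightarrow)$ I would first read off $f(0)=0$ by testing the equality on $\ran(I-P)$, which is nonzero because $P\neq I$: there the left side $Pf(A)P$ vanishes while $f(PAP)=f(0)I$. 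With $f(0)=0$ in hand, restricting the equality to $\ran P$ gives the Jensen equality $V^*f(A)V=f(V^*AV)$, equivalently
\[
\int_{(0,\infty)}\big(g_s(A_0)-V^*g_s(A)V\big)\,d\mu(s)=0,
\]
an integral of a nonnegative operator-valued function against $\mu$.

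The crux is to make non-affineness ($\mu\neq 0$) force $PA=AP$. Decomposing $s+A = \left(\begin{smallmatrix} a & b\\ b^* & d\end{smallmatrix}\right)$ with respect to $\ran P\oplus(\ran P)^{\perp}$, so that $a=s+A_0$ and $d\Ge sI$ is invertible, the Schur complement identity gives $V^*(s+A)^{-1}V=(a-bd^{-1}b^*)^{-1}$ while $(s+V^*AV)^{-1}=a^{-1}$, so the integrand $g_s(A_0)-V^*g_s(A)V=s^2\big((a-bd^{-1}b^*)^{-1}-a^{-1}\big)$ vanishes if and only if $b=0$. The key point is that $b=PA(I-P)$ is the off-diagonal block of $A$ and is \emph{independent of} $s$; hence the integrand is either identically zero in $s$ (when $b=0$) or nowhere zero. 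If $b\neq 0$, pick $s_0\in\supp\mu$ and a vector $x$ with $\is{(g_{s_0}(A_0)-V^*g_{s_0}(A)V)x}{x}>0$; by norm-continuity of the resolvents in $s$ this persists on a neighbourhood of $s_0$, which has positive $\mu$-measure, contradicting the displayed identity. Therefore $b=0$, i.e. $\ran P$ reduces $A$ and $PA=AP$. I expect this last step — forcing the non-affine hypothesis to bite through the $s$-independence of $b$ together with the support argument — to be the main obstacle; the remaining inputs (Löwner's representation and the Schur complement identity) are standard.
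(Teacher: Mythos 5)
The paper does not prove this theorem at all: it is imported verbatim from Hansen \cite{Han80} (the inequality) and Uchiyama \cite{uch93} (the equality case for projections), so there is no internal proof to compare against. Your argument is, as far as I can check, a correct and essentially self-contained derivation of both parts. The computation of the two compressions through the Halmos dilation is right ($V^*MV=T^*AT$ and $V^*f(M)V=T^*f(A)T+f(0)(I-T^*T)$, with $f(0)(I-T^*T)\Ge 0$ absorbing the contraction case into the isometry case), the reduction of the Jensen inequality to $V^*(s+M)^{-1}V\Ge (s+V^*MV)^{-1}$ via the Schur complement is standard and sound, and in the equality analysis the two key observations — that $f(0)=0$ is forced by testing on $\ran(I-P)\neq 0$, and that the integrand $s^2\big((a-bd^{-1}b^*)^{-1}-a^{-1}\big)$ vanishes for some (equivalently every) $s>0$ precisely when the off-diagonal block $b=PA(I-P)$ vanishes — are both correct; combined with the continuity-on-$\supp\mu$ argument and $\mu\neq 0$ this does force $PA=AP$. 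This route (L\"owner representation plus resolvent/Schur-complement estimates) is in fact close in spirit to Hansen's original proof of the inequality, which also reduces to the building blocks $t\mapsto \lambda t(\lambda+t)^{-1}$; Uchiyama's published equality analysis is organized somewhat differently but rests on the same mechanism. The only points worth tightening if you were to write this up formally are routine: justify the norm-convergence of $\int g_s(\,\cdot\,)\,d\mu(s)$ on a bounded spectral set (using $\|g_s(M)\|\Le\min(s,\|M\|)$ and $\int \tfrac{s}{1+s}\,d\mu(s)<\infty$) so that compression commutes with the integral, and note that $a-bd^{-1}b^*$ is bounded below (being the inverse of the corner $V^*(s+M)^{-1}V\Ge (s+\|M\|)^{-1}I$) so the Schur identity is legitimate.
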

A linear map $\varPhi \colon \mathcal{A} \rightarrow
\mathcal{ B}$ between unital $C^*$-algebras is said to
be {\em positive} if $\varPhi(a) \Ge 0$ for all $a\in
\mathcal{A}$ such that $a \Ge 0$. We also need the
Schwarz-type inequality.
   \begin{theorem}[{\cite[Theorem 2]{L-R74}}] \label{L-R}
Let $R\in \ogr(\hh,\kk)$ and let
$\varPhi\colon\ogr(\kk)\to\ogr(\hh)$ be the positive
linear map defined by
   \begin{align} \label{obmiar}
\varPhi(X)=R^*XR,\quad X\in \ogr(\kk).
   \end{align}
Then for all $A,B\in\ogr(\kk)$, the net
$\{\varPhi(A^*B)(\varPhi(B^*B)+\varepsilon
I)^{-1}\varPhi(B^*A)\}_{\varepsilon>0}$ is
convergent in the strong operator topology
as $\varepsilon \downarrow 0$ and
   \begin{align*}
\varPhi(A^*A)\Ge
\sslim_{\varepsilon\downarrow 0}\,
\varPhi(A^*B)(\varPhi(B^*B)+\varepsilon
I)^{-1}\varPhi(B^*A).
   \end{align*}
   \end{theorem}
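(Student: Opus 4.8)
The plan is to absorb $R$ into $A$ and $B$, reduce the statement to a clean operator inequality, and then obtain the strong limit through a monotonicity argument. First I would set $P:=AR$ and $Q:=BR$, both elements of $\ogr(\hh,\kk)$. With this notation $\varPhi(A^*A)=P^*P$, $\varPhi(A^*B)=P^*Q$, $\varPhi(B^*A)=Q^*P$ and $\varPhi(B^*B)=Q^*Q$, so that, writing
\begin{align*}
M_\varepsilon &:=\varPhi(A^*B)(\varPhi(B^*B)+\varepsilon I)^{-1}\varPhi(B^*A)\\
&\phantom{:}=(P^*Q)(Q^*Q+\varepsilon I)^{-1}(Q^*P)
\end{align*}
for $\varepsilon>0$, the theorem reduces to showing that $\{M_\varepsilon\}_{\varepsilon>0}$ converges strongly as $\varepsilon\downarrow 0$ and that $P^*P\Ge \sslim_{\varepsilon\downarrow 0}M_\varepsilon$.

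For a fixed $\varepsilon>0$ I would establish $P^*P\Ge M_\varepsilon$ by a $2\times 2$ block-operator argument on $\hh\oplus\hh$. Since $\varepsilon>0$, the operator $Q^*Q+\varepsilon I$ is positive and invertible, and the factorization
\begin{align*}
\begin{pmatrix} P^*P & P^*Q \\ Q^*P & Q^*Q+\varepsilon I \end{pmatrix}
=\begin{pmatrix} P^* & 0 \\ Q^* & \sqrt{\varepsilon}\,I \end{pmatrix}
\begin{pmatrix} P & Q \\ 0 & \sqrt{\varepsilon}\,I \end{pmatrix}
\end{align*}
exhibits the left-hand side as a product of the form $T^*T$, hence as a positive operator on $\hh\oplus\hh$. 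Taking the Schur complement of the invertible lower-right corner then yields $P^*P-M_\varepsilon\Ge 0$, that is, $\varPhi(A^*A)\Ge M_\varepsilon$ for every $\varepsilon>0$.

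Next I would show that the net increases as $\varepsilon$ decreases. Putting $W:=Q^*P$, one has $M_\varepsilon=W^*(Q^*Q+\varepsilon I)^{-1}W$. For $0<s_1<s_2$ we have $Q^*Q+s_1 I\Le Q^*Q+s_2 I$, whence the reverse inequality $(Q^*Q+s_2 I)^{-1}\Le (Q^*Q+s_1 I)^{-1}$ for the (positive, invertible) inverses; conjugating by $W$ preserves the order and gives $M_{s_2}\Le M_{s_1}$. Thus $\{M_\varepsilon\}_{\varepsilon>0}$ is a net of positive operators that increases as $\varepsilon\downarrow 0$ and is bounded above by $P^*P$. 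By the monotone convergence theorem for bounded increasing nets of selfadjoint operators, it converges in the strong operator topology to a selfadjoint operator $M\Le P^*P$. Passing to the limit in the inequality of the previous step gives $\varPhi(A^*A)=P^*P\Ge M=\sslim_{\varepsilon\downarrow 0}M_\varepsilon$, as required.

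The main obstacle I anticipate is the passage to the strong limit. Monotonicity together with the uniform bound $M_\varepsilon\Le P^*P$ yields pointwise (hence weak) convergence immediately, but upgrading this to strong convergence relies on the monotone convergence theorem for nets, whose hypotheses hold precisely because the net is increasing and uniformly norm-bounded. The ancillary points---invertibility of $Q^*Q+\varepsilon I$ and the validity of the Schur-complement step---are routine, yet they must be handled with care: both fail at $\varepsilon=0$, which is exactly why the regularization by $\varepsilon I$ and the subsequent limit are indispensable.
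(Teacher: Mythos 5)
Your proof is correct. Note that the paper itself does not prove this statement; it is quoted verbatim from Lieb and Ruskai \cite{L-R74}, so there is no in-paper argument to compare against. Your reduction $P=AR$, $Q=BR$ correctly turns all four quantities into $P^*P$, $P^*Q$, $Q^*P$, $Q^*Q$, the block factorization
$\left(\begin{smallmatrix} P^* & 0 \\ Q^* & \sqrt{\varepsilon}\,I \end{smallmatrix}\right)\left(\begin{smallmatrix} P & Q \\ 0 & \sqrt{\varepsilon}\,I \end{smallmatrix}\right)$
does exhibit the relevant $2\times 2$ block matrix as positive (one should just be explicit that this operator acts $\hh\oplus\hh\to\kk\oplus\hh$, so the product is $T^*T$ on $\hh\oplus\hh$), and the Schur-complement step with the invertible corner $Q^*Q+\varepsilon I$ gives $\varPhi(A^*A)\Ge M_\varepsilon$ for each $\varepsilon>0$. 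The anti-monotonicity of the inverse on positive invertible operators yields that $\{M_\varepsilon\}$ increases as $\varepsilon\downarrow 0$, and Vigier's theorem (strong convergence of bounded increasing nets of selfadjoint operators) then produces the strong limit and preserves the upper bound $P^*P$. This is essentially the classical Lieb--Ruskai argument, with the cosmetic difference that they work with the positivity of $\varPhi$ applied entrywise to $\left(\begin{smallmatrix} A^*A & A^*B \\ B^*A & B^*B\end{smallmatrix}\right)$, whereas you absorb $R$ first; the two routes are interchangeable, and yours makes the Schur-complement step slightly more transparent.
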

   \section{\label{Sec.6v}Preparations for the proof of Theorem~\ref{hypth}}
   One of the main tools used in the proof of
Theorem~\ref{hypth} is stated below.
   \begin{theorem}\label{main1}
Let $\hh$ be a closed subspace of a Hilbert space
$\kk$, $T\in \bou(\hh)$ and $N\in \ogr(\kk)$ be normal
operators and $p,q,r\in \zbb_+$ be such that $p<q$ and
$\frac{p+q}{2}<r$. Suppose that $\varXi$ is a subset
of $\zbb_+^2$ such that $\{(p,q), (r,r)\} \subseteq
\varXi$ and
   \begin{equation}  \label{pukur}
T^{*m}T^n=PN^{*m}N^n|_\hh, \quad (m,n)\in \varXi,
   \end{equation}
where $P\in \ogr(\kk)$ is the orthogonal projection of
$\kk$ onto $\hh$. Then
   \begin{itemize}
    \item[(i)] $T^{*n}T^n=PN^{*n}N^n|_\hh$ for all $n\in
\zbb_+$,
    \item[(ii)] $P$ commutes with
$N^{d}$, where $d:=\gcd \{m-n\colon (m,n)\in
\varXi\};$ equivalently{\em :} $P$ commutes with $|N|$
and $U^{d}$, where $N=U|N|$ is the unitary polar
decomposition of $N$ $($see \eqref{pdu}$)$.
   \end{itemize}
   \end{theorem}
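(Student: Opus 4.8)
The plan is to establish the equivalent form of (ii) — that $P$ commutes with both $|N|$ and $U^{d}$ — and then to deduce (i) and the first form of (ii). I will work with the compression $\varPhi(X):=R^*XR=PX|_\hh$, a UCP map $\ogr(\kk)\to\ogr(\hh)$, where $R\colon\hh\to\kk$ is the isometric inclusion; the hypothesis then reads $T^{*m}T^n=\varPhi(N^{*m}N^n)$ for $(m,n)\in\varXi$. Writing $N=U|N|$ for the unitary polar decomposition \eqref{pdu} and $T=U_T|T|$ for that of $T$, normality lets me use $N^{*a}N^b=U^{\,b-a}|N|^{a+b}$ and $T^{*a}T^b=U_T^{\,b-a}|T|^{a+b}$ freely; in particular the two distinguished pairs give $|T|^{2r}=\varPhi(|N|^{2r})$ and $U_T^{\,k}|T|^{p+q}=\varPhi(U^{k}|N|^{p+q})$ with $k:=q-p\ge1$. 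I may assume $P\neq I$, since otherwise the conclusions are trivial.

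I expect the main obstacle to be showing that $P$ commutes with $|N|$. The upper bound comes from the Hansen inequality (Theorem~\ref{hans}): for $f(x)=x^{n/r}$ with $1\le n<r$ — a nonaffine operator monotone function with $f(0)=0$ — applied to $A=|N|^{2r}$ and the projection $P$, it gives $\varPhi(|N|^{2n})=Pf(A)P\le f(PAP)=|T|^{2n}$, and it is here that the assumption $p+q<2r$ is used, as it confines the relevant indices strictly below $r$. For a matching lower bound at one interior index I will apply the generalized Schwarz inequality (Theorem~\ref{L-R}) to $\varPhi$. When $q<r$, the choice $A=N^q$, $B=N^p$ gives, after substituting $\varPhi(|N|^{2p})\le|T|^{2p}$ and passing to the limit $\varepsilon\downarrow0$, the bound $\varPhi(|N|^{2q})\ge|T|^{2q}$, hence equality at $n_0=q$. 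When $p+q>r$, the choice $A=|N|^{p+q-r}$, $B=U^{k}|N|^{r}$ — for which $\varPhi(A^*B)=U_T^{k}|T|^{p+q}$ and $\varPhi(B^*B)=|T|^{2r}$ hold exactly — gives $\varPhi(|N|^{2(p+q-r)})\ge|T|^{2(p+q-r)}$, hence equality at $n_0=p+q-r\in[1,r)$. In both situations equality is attained in the Hansen inequality at a nonaffine power vanishing at $0$, so its equality clause forces $P|N|^{2r}=|N|^{2r}P$ and thus $P|N|=|N|P$. The only remaining index pattern, $(p,q)=(0,r)$, I will treat directly: there the pair $(0,r)$ reads $T^r=\varPhi(N^r)$, whence $\varPhi(N^{*r})\varPhi(N^r)=|T|^{2r}=\varPhi(N^{*r}N^r)$ is equality in the Kadison--Schwarz inequality, and Lemma~\ref{rudec}(iv), applied to the normal operator $N^r$ with normal compression $PN^rP=T^r\oplus0$, yields $PN^r=N^rP$ and hence again $P|N|=|N|P$.

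With $P|N|=|N|P$ in hand, $|N|$ leaves $\hh$ invariant and $(|N||_\hh)^{2r}=\varPhi(|N|^{2r})=|T|^{2r}$ forces $|N||_\hh=|T|$, so $|N|^{j}|_\hh=|T|^{j}$ for all $j\in\zbb_+$. For a fixed $(m,n)\in\varXi$ with $m\neq n$ and $j=n-m$, the identity $N^{*m}N^n=U^{\,j}|N|^{m+n}$ turns the defining relation into $(PU^{\,j}|_\hh)|T|^{m+n}=U_T^{\,j}|T|^{m+n}$, so that $PU^{\,j}|_\hh=U_T^{\,j}$ on $\overline{\ran|T|}$; on $\ker|T|=\ker N\cap\hh$ both sides act as the identity because $U|_{\ker N}=I$ by \eqref{pdu}. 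Hence $PU^{\,j}|_\hh=U_T^{\,j}$ is unitary on $\hh$, so $\|PU^{\,j}h\|=\|h\|=\|U^{\,j}h\|$ for $h\in\hh$ forces $U^{\,j}h\in\hh$; thus $\hh$ reduces $U^{\,j}$ and $P$ commutes with $U^{\,j}$. Since $\{j\in\zbb\colon PU^{\,j}=U^{\,j}P\}$ is a subgroup of $\zbb$ containing every difference $n-m$ with $(m,n)\in\varXi$, it contains their greatest common divisor $d$, so $P$ commutes with $U^{\,d}$. Combining, $P$ commutes with $|N|^{d}$ and with $U^{d}$, hence with $N^{d}=U^{d}|N|^{d}$, which is (ii); and $\varPhi(|N|^{2n})=(|N||_\hh)^{2n}=|T|^{2n}=T^{*n}T^n$ for every $n\in\zbb_+$, which is (i).
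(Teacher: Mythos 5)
Your proof is correct and follows essentially the same strategy as the paper's: squeeze the compression of a power of $|N|$ between a Lieb--Ruskai lower bound (Theorem~\ref{L-R}) and a Hansen upper bound at a nonaffine power $t^{n_0/r}$, invoke the equality clause of Theorem~\ref{hans} to get $P|N|=|N|P$, deduce (i) from uniqueness of positive roots, then obtain commutation of $P$ with $U^{n-m}$ pair by pair and pass to the greatest common divisor. There are two local variations, both sound. First, your case split $q<r$, $p+q>r$, $(p,q)=(0,r)$ differs from the paper's trichotomy on $p+q$ versus $r$, but it does cover all possibilities (if $q\Ge r$ and $p+q\Le r$ then $p=0$ and $q=r$), and your first case inserts the auxiliary bound $\varPhi(|N|^{2p})\Le|T|^{2p}$ into the resolvent where the paper's corresponding cases instead use the Kadison--Schwarz-type inequality $(PXP)(PX^*P)\Le PXX^*P$ together with Lemma~\ref{rudec}. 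Second, for the step $PU^{n-m}=U^{n-m}P$ you argue directly that the restriction of $PU^{n-m}$ to $\hh$ coincides with the unitary polar factor of $T$ raised to the power $n-m$, so that $\|PU^{n-m}h\|=\|U^{n-m}h\|$ forces $U^{n-m}\hh=\hh$; the paper instead routes this through Lemma~\ref{wurnik} (again via Kadison--Schwarz equality and Lemma~\ref{rudec}). Your norm argument at that step is somewhat more elementary, and your subgroup-of-$\zbb$ observation replaces the paper's explicit B\'ezout identity; the content is the same.
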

Before proving Theorem~\ref{main1}, we present a few
lemmas.
   \begin{lemma} \label{npq}
Let $N\in \ogr(\hh)$ be a normal operator,
$N=U |N|$ be the unitary polar decomposition
of $N$ $($see \eqref{pdu}$)$, $P \in
\ogr(\hh)$ be an orthogonal projection and
$(p,q)\in \zbb_+^2 \setminus \{(0,0)\}$.
Then the following conditions are
equivalent{\em :}
   \begin{enumerate}
   \item[(i)] $P$ commutes with $N^{*p}N^q$,
   \item[(ii)] $P$ commutes with $|N|$ and
$U^{q-p}$.
   \end{enumerate}
Moreover, if $p\neq q$, then {\em (i)} is
equivalent to any of the following
statements{\em :}
   \begin{enumerate}
   \item[(iii)] $P$ commutes with
$N^{*{\tilde p}}N^{\tilde q}$ for every
$(\tilde p, \tilde q) \in \zbb_+^2 \setminus
\{(0,0)\}$ such that $|\tilde q-\tilde
p|=|q-p|$,
   \item[(iv)] $P$ commutes with $N^{|q-p|}$.
   \end{enumerate}
   \end{lemma}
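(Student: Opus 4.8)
The plan is to reduce everything to the single algebraic identity
\begin{align*}
N^{*p}N^q = |N|^{p+q}\,U^{q-p},
\end{align*}
which follows from \eqref{pdu} because $U$ and $|N|$ commute: writing $N=U|N|$, $N^*=|N|U^*$ and pushing all powers of $|N|$ to the left gives $N^{*p}N^q = U^{*p}|N|^{p+q}U^q = |N|^{p+q}U^{q-p}$ (here $U^{q-p}$ means $(U^*)^{p-q}$ when $q<p$). With this identity in hand, the implication (ii)$\Rightarrow$(i) is immediate: $P$ commuting with $|N|$ forces commutation with $|N|^{p+q}$, and $P$ commutes with $U^{q-p}$ by hypothesis, hence with the product $N^{*p}N^q$.

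For the converse (i)$\Rightarrow$(ii), which is the heart of the matter, I would first recover commutation with $|N|$. Since $P=P^*$, commutation with $M:=N^{*p}N^q$ entails commutation with $M^*$, hence with
\begin{align*}
M^*M = N^{*q}N^p N^{*p} N^q = |N|^{2(p+q)},
\end{align*}
where normality of $N$ is used to pass the middle factors through each other. Applying the continuous functional calculus to the positive operator $|N|^{2(p+q)}$ (take the $2(p+q)$-th root) yields $P|N|=|N|P$. The remaining, and main, difficulty is to strip the factor $|N|^{p+q}$ off $M=|N|^{p+q}U^{q-p}$ and conclude $PU^{q-p}=U^{q-p}P$; this is delicate precisely because $|N|$ need not be invertible. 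To handle it I would pass to the decomposition $\hh=\nul(N)\oplus\overline{\ran(N)}$ from \eqref{orotrh}: commutation of $P$ with $|N|$ forces $P$ to commute with the orthogonal projection onto $\overline{\ran(N)}=\overline{\ran(|N|)}$, so $P$ splits as $P_0\oplus P_1$ along this decomposition, and $U^{q-p}$ splits as $\tilde V^{q-p}\oplus I$ by \eqref{pdu} and \eqref{orotrh}. On $\nul(N)$ one has $U^{q-p}=I$, so there is nothing to prove. On $\overline{\ran(N)}$ the operator $|N|^{p+q}$ is injective (its kernel equals $\nul(|N|)=\nul(N)$), and combining $PM=MP$ with $P|N|^{p+q}=|N|^{p+q}P$ gives $|N|^{p+q}\big(PU^{q-p}-U^{q-p}P\big)=0$ there; injectivity then forces $PU^{q-p}=U^{q-p}P$ on $\overline{\ran(N)}$. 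Reassembling the two summands completes (i)$\Rightarrow$(ii).

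For the final assertions I would leverage the equivalence just established. Set $n:=|q-p|\Ge 1$, so $N^n=|N|^nU^n$ by the basic identity with the pair $(0,n)$. Applying (i)$\Leftrightarrow$(ii) to $(0,n)$ shows that $P$ commutes with $N^n$ if and only if $P$ commutes with $|N|$ and $U^n$; and since $P=P^*$, commutation with $U^n$ is equivalent to commutation with $U^{-n}=(U^n)^*$, so the conditions ``$P$ commutes with $|N|$ and $U^{q-p}$'' and ``$P$ commutes with $|N|$ and $U^n$'' coincide. This yields (i)$\Leftrightarrow$(iv). For (iii), every $(\tilde p,\tilde q)\in\zbb_+^2\setminus\{(0,0)\}$ with $|\tilde q-\tilde p|=n$ has $\tilde q-\tilde p=\pm n$, so applying (i)$\Leftrightarrow$(ii) to it reduces ``$P$ commutes with $N^{*\tilde p}N^{\tilde q}$'' to the same condition ``$P$ commutes with $|N|$ and $U^n$''; hence each such statement is equivalent to (i), and so is their conjunction. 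I expect the injective-factor cancellation on $\overline{\ran(N)}$ to be the only genuinely nontrivial step, the rest being bookkeeping with the identity $N^{*p}N^q=|N|^{p+q}U^{q-p}$ together with the functional calculus.
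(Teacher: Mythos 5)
Your proposal is correct and follows essentially the same route as the paper: both rest on the identity $N^{*p}N^q=U^{q-p}|N|^{p+q}$, recover $P|N|=|N|P$ from commutation with $M^*M=|N|^{2(p+q)}$, split $\hh=\nul(N)\oplus\overline{\ran(N)}$ using $U|_{\nul(N)}=I$, and cancel the factor $|N|^{p+q}$ on the range part (you do this by injectivity of $|N|^{p+q}$ on $\overline{\ran(N)}$, the paper by density of $\ran(|N|^{p+q})$ — mirror images of the same step). The reduction of (iii) and (iv) to the pair $(0,|q-p|)$ via the established equivalence and $P=P^*$ also matches the paper's argument.
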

   \begin{proof}
Since $U$ and $|N|$ commute (see
\eqref{pdu}), we see that
   \begin{align} \label{pulder}
N^{*p}N^q=U^{q-p}|N|^{p+q}.
   \end{align}

(i)$\Rightarrow$(ii) First, we show that $P$ and $|N|$
commute. It follows from the selfadjointness of $P$
that $P$ commutes with $(N^{*p}N^q)^*$, and therefore
also with
   \begin{equation*}
(N^{*p}N^q)^*N^{*p}N^q=|N|^{2(p+q)}.
   \end{equation*}
Since $p+q \Ge 1$, we see that the commutants of
$|N|^{2(p+q)}$ and $|N|$ coincide, which yields
$P|N|=|N|P$. This, together with (i) and
\eqref{pulder}, leads to
   \begin{equation} \label{upn}
PU^{q-p}|N|^{p+q}=U^{q-p}|N|^{p+q}P=U^{q-p}P|N|^{p+q}.
   \end{equation}
The kernel-range decomposition, together with $p+q \Ge
1$, yields
   \begin{equation*}
\overline{\ran(|N|^{p+q})}^\perp =
\nul(|N|^{p+q})=\nul(|N|) =
\overline{\ran(|N|)}^\perp,
   \end{equation*}
which implies that
$\overline{\ran(|N|^{p+q})} =
\overline{\ran(|N|)}$. This, combined with
\eqref{upn}, shows that
   \begin{equation}\label{pur}
PU^{q-p}|_{\overline{\ran(|N|)}}=U^{q-p}P|_{\overline{\ran(|N|)}}.
   \end{equation}
On the other hand, the fact that $P$
commutes with $|N|$ implies that
$P\nul(|N|)\subseteq \nul(|N|)$. Since by
\eqref{orotrh},
$U|_{{\nul(|N|)}}=I_{{\nul(|N|)}}$, we get
   \begin{equation*}
PU^{q-p}|_{{\nul(|N|)}}=P|_{{\nul(|N|)}}=U^{q-p}P|_{{\nul(|N|)}}.
   \end{equation*}
This combined with \eqref{pur} shows that
$P$ commutes with $U^{q-p}$.

(ii)$\Rightarrow$(i) Use \eqref{pulder}.

Assume now that $p\neq q$.

(ii)$\Rightarrow$(iii) Apply \eqref{pulder} to
$(\tilde p, \tilde q)$ in place of $(p,q)$.

(iii)$\Rightarrow$(iv) Apply (iii) to
$\tilde p= 0$ and $\tilde q=|q-p|$.

(iv)$\Rightarrow$(ii) Apply implication
(i)$\Rightarrow$(ii) to the pair $(0,|q-p|)$
in place of $(p,q)$.
   \end{proof}
   \begin{lemma} \label{nwer}
Let $\hh$ be a closed subspace of a Hilbert
space $\kk$, $T\in \bou(\hh)$ and $N\in
\ogr(\kk)$ be normal operators and $p\in
\natu$ be such that
   \begin{equation} \label{tpq-a}
T^{*p}T^p=PN^{*p}N^p|_\hh,
   \end{equation}
where $P \in \ogr(\kk)$ is the orthogonal
projection of $\kk$ onto $\hh$. Let
$P|N|=|N|P$. Then
   \begin{equation*}
T^{*n}T^n=PN^{*n}N^n|_\hh,\quad n\in \zbb_+.
   \end{equation*}
   \end{lemma}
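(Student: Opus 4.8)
The plan is to collapse the statement to a single identity between positive operators and then invoke injectivity of the $p$-th power map. Since $T$ and $N$ are normal, the spectral theorem for normal operators gives
\[
T^{*n}T^n = (T^*T)^n = |T|^{2n}, \qquad N^{*n}N^n = (N^*N)^n = |N|^{2n}, \quad n\in \zbb_+,
\]
so the entire problem concerns powers of the positive operators $|T|^2$ and $|N|^2$. Normality is genuinely used here, and this is the only place where the hypothesis on $T$ enters.

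First I would record the consequence of $P|N|=|N|P$. Squaring yields $P|N|^2=|N|^2P$, that is, $P$ commutes with $N^*N$, so by Lemma~\ref{rudec} the subspace $\hh=\ran(P)$ reduces $N^*N$ and hence each power $(N^*N)^k=N^{*k}N^k$. For $h\in \hh$ one then has $N^{*k}N^k h\in \hh$, so the projection is absorbed and
\[
PN^{*k}N^k|_\hh = (N^*N)^k|_\hh = \big((N^*N)|_\hh\big)^k, \quad k\in \zbb_+.
\]
Setting $B:=(N^*N)|_\hh\in \ogr(\hh)$, a positive operator, the hypothesis \eqref{tpq-a} reads $(T^*T)^p=B^p$, and the desired conclusion reads $(T^*T)^n=B^n$ for all $n$.

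The final step is to pass from $(T^*T)^p=B^p$ to $T^*T=B$. Because $T^*T$ and $B$ are positive and $p\in \natu$, applying the continuous function $[0,\infty)\ni s\mapsto s^{1/p}$ through the functional calculus gives $\big((T^*T)^p\big)^{1/p}=T^*T$ and $\big(B^p\big)^{1/p}=B$, whence $T^*T=B$. Raising both sides to the $n$-th power and reading the displays above in reverse yields $T^{*n}T^n=|T|^{2n}=B^n=PN^{*n}N^n|_\hh$, as required (the case $n=0$ being trivial). I expect no substantial obstacle: the argument rests entirely on the normality of $T$ and $N$ (to reduce $T^{*n}T^n$ to $(T^*T)^n$) and on the injectivity of $s\mapsto s^p$ on positive operators. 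The one point needing care is absorbing $P$ in the middle display, which is precisely where the commutation hypothesis $P|N|=|N|P$ is spent.
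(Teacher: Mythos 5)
Your proposal is correct and follows essentially the same route as the paper: both arguments use normality to rewrite $T^{*n}T^n=|T|^{2n}$ and $N^{*n}N^n=|N|^{2n}$, use the commutation $P|N|=|N|P$ to absorb the projection and identify the hypothesis with an equality of $p$-th powers of positive operators on $\hh$, and then invoke uniqueness of positive $p$-th roots (your functional-calculus application of $s\mapsto s^{1/p}$ is the same step the paper phrases as ``uniqueness of positive $n$th roots'') before raising back to arbitrary powers. The only cosmetic difference is that the paper works with $|N|\big|_{\hh}$ while you work with its square $(N^*N)|_{\hh}$.
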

   \begin{proof}
Since $\hh$ reduces $|N|$, and both
operators $T$ and $N$ are normal, we get
   \begin{align*}
|T|^{2p} = T^{*p}T^p \overset{\eqref{tpq-a}}
= P N^{*p}N^p |_{\hh} = P|N|^{2p}|_{\hh} =
(|N|\big|_{\hh})^{2p}.
   \end{align*}
By uniqueness of positive $n$th roots, we see that
$|T|=|N|\big|_{\hh}$, which yields
   \begin{align*}  \tag*{\qedhere}
T^{*n}T^n = |T|^{2n} = (|N|\big|_{\hh})^{2n}
= PN^{*n}N^n|_{\hh}, \quad n \in \zbb_+.
   \end{align*}
   \end{proof}
   \begin{lemma} \label{wurnik}
Let $\hh$ be a closed subspace of a Hilbert
space $\kk$, $T\in \bou(\hh)$ and $N\in
\ogr(\kk)$ be normal operators and $p,q\in
\zbb_+$ be such that $p< q$ and
   \begin{equation} \label{tpq}
T^{*m}T^n=PN^{*m}N^n|_\hh, \quad (m,n) \in
\big\{(p,q),(q,q)\big\},
   \end{equation}
where $P \in \ogr(\kk)$ is the orthogonal
projection of $\kk$ onto $\hh$. Then
   \begin{itemize}
   \item[(i)] $T^{*n}T^n=PN^{*n}N^n|_\hh$ for
all $n\in \zbb_+$,
   \item[(ii)] $P$ commutes with $|N|$, $U^{q-p}$
and $N^{q-p}$, where $N=U |N|$ is the
unitary polar decomposition of $N$ $($see
\eqref{pdu}$)$,
  \item[(iii)] $\hh=(\hh\cap\nul(N))
\oplus(\hh\cap\overline{\ran(N)})$.
   \end{itemize}
   \end{lemma}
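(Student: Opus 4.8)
Write $P\in\ogr(\kk)$ for the orthogonal projection onto $\hh$, let $R\in\ogr(\hh,\kk)$ be the inclusion map (so that $R^*XR=PXP|_\hh$ for $X\in\ogr(\kk)$), and let $\varPhi(X):=R^*XR$ be the associated UCP map. If $\hh=\kk$ then $P=I$, $T=N$ and (i)--(iii) are trivial, so I assume $P\neq I$. Write $N=U|N|$ for the unitary polar decomposition \eqref{pdu} and $T=V|T|$ with $V$ unitary (possible since $T$ is normal), and put $a:=q-p\Ge 1$; recall from \eqref{pulder} that $N^{*p}N^q=U^{a}|N|^{p+q}$ and $T^{*p}T^q=V^{a}|T|^{p+q}$. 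The entire difficulty is concentrated in showing that $P$ commutes with $|N|$; once this is known, all three assertions follow from the lemmas already proved.

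The plan is to pin down the value of $Q:=P|N|^{p+q}|_\hh=\varPhi(|N|^{p+q})$ by squeezing it between two bounds. For the upper bound I would apply Theorem~\ref{hans} with the projection $P$, the positive operator $|N|^{2q}$, and the operator monotone function $f(s)=s^{(p+q)/(2q)}$ (note $f(0)=0$ and, since $0<(p+q)/(2q)<1$, $f$ is not affine); using the hypothesis $P|N|^{2q}|_\hh=T^{*q}T^q=|T|^{2q}$ from \eqref{tpq} this gives $Q\Le|T|^{p+q}$. For the lower bound I would invoke the Schwarz-type inequality of Theorem~\ref{L-R} with $A:=|N|^{(p+q)/2}U^{*a}$ and $B:=|N|^{(p+q)/2}$. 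Since $U$ commutes with $|N|$, a direct computation using \eqref{tpq} and its adjoints gives $\varPhi(A^*A)=\varPhi(B^*B)=Q$, $\varPhi(A^*B)=\varPhi(U^{a}|N|^{p+q})=T^{*p}T^q$ and $\varPhi(B^*A)=T^{*q}T^p$, whence
\begin{align*}
Q\Ge\sslim_{\varepsilon\downarrow 0}(T^{*p}T^q)(Q+\varepsilon I)^{-1}(T^{*q}T^p).
\end{align*}

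Now I would feed the upper bound back into this estimate. Writing $T^{*p}T^q=V^{a}|T|^{p+q}$ and $T^{*q}T^p=|T|^{p+q}V^{*a}$, and using that $Q+\varepsilon I\Le|T|^{p+q}+\varepsilon I$ forces $(Q+\varepsilon I)^{-1}\Ge(|T|^{p+q}+\varepsilon I)^{-1}$, each operator on the right dominates $V^{a}|T|^{p+q}(|T|^{p+q}+\varepsilon I)^{-1}|T|^{p+q}V^{*a}$, which converges strongly to $V^{a}|T|^{p+q}V^{*a}=|T|^{p+q}$ as $\varepsilon\downarrow 0$. Hence $Q\Ge|T|^{p+q}$, so $Q=|T|^{p+q}$ and equality holds in the Hansen inequality above. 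As $f$ is not affine and $P\neq I$, the equality condition in Theorem~\ref{hans} yields $P|N|^{2q}=|N|^{2q}P$, and since the commutant of $|N|^{2q}$ coincides with that of $|N|$ (because $|N|=(|N|^{2q})^{1/(2q)}$) this gives $P|N|=|N|P$. I expect this reverse bound to be the main obstacle: it is exactly here that the off-diagonal datum $(p,q)$ must be coupled to the diagonal datum $(q,q)$, and since the naive estimate $|\varPhi(M)|\Le\varPhi(|M|)$ fails for general normal $M$, the regularized inverse in Theorem~\ref{L-R} together with the careful $\varepsilon\downarrow 0$ passage is what makes the coupling work.

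With $P|N|=|N|P$ in hand the conclusions are immediate. Assertion (i) is Lemma~\ref{nwer} applied with $q$ in place of $p$, its hypothesis $T^{*q}T^q=PN^{*q}N^q|_\hh$ being part of \eqref{tpq}. For (ii), take $n=p+q$ in (i): the operators $M:=N^{*p}N^q=U^{a}|N|^{p+q}$ and $PMP|_\hh=T^{*p}T^q=V^{a}|T|^{p+q}$ are both normal, and
\begin{align*}
(PM^*P)(PMP)|_\hh=(T^{*p}T^q)^*(T^{*p}T^q)=|T|^{2(p+q)}=PN^{*(p+q)}N^{p+q}|_\hh=PM^*MP|_\hh,
\end{align*}
so condition (iv) of Lemma~\ref{rudec} gives $PM=MP$; as $p\neq q$, Lemma~\ref{npq} then shows that $P$ commutes with $|N|$, $U^{q-p}$ and $N^{q-p}$. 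Finally, (iii) follows because $P$ commutes with $|N|$, hence with its spectral projections onto $\nul(|N|)=\nul(N)$ and $\overline{\ran(|N|)}=\overline{\ran(N)}$; intersecting the reducing decomposition $\kk=\nul(N)\oplus\overline{\ran(N)}$ with $\hh=\ran(P)$ yields $\hh=(\hh\cap\nul(N))\oplus(\hh\cap\overline{\ran(N)})$.
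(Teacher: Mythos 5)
Your proof is correct, and for the crux of the lemma --- establishing $P|N|=|N|P$ --- it takes a genuinely different route from the paper's. The paper applies Theorem~\ref{L-R} to $A=N^p$, $B=N^q$ and a monotone-convergence computation to get the lower bound $|T|^{2p}\Le P|N|^{2p}|_\hh$, then Hansen's inequality with $f(t)=t^{p/q}$ to get the reverse bound; since the exponent $p/q$ degenerates when $p=0$, the paper must treat that case separately by a direct Lemma~\ref{rudec} argument. You instead work at the level of $|N|^{p+q}$: Hansen with $f(s)=s^{(p+q)/(2q)}$ gives the upper bound $P|N|^{p+q}|_\hh\Le|T|^{p+q}$, and Theorem~\ref{L-R} with $A=|N|^{(p+q)/2}U^{*(q-p)}$, $B=|N|^{(p+q)/2}$ --- with the upper bound fed back into the regularized inverse --- gives the matching lower bound. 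Since $p+q\Ge 1$ always, this treats $p=0$ and $p\Ge 1$ uniformly, a small but genuine gain in economy; the price is the slightly more elaborate choice of $A$ and $B$ and the extra anti-monotonicity step for $(Q+\varepsilon I)^{-1}$, all of which you carry out correctly. Your derivations of (i), (ii) and (iii) from $P|N|=|N|P$ coincide with the paper's (Lemma~\ref{nwer}; Lemma~\ref{rudec}(iv) combined with Lemma~\ref{npq} applied to $N^{*p}N^q$; spectral projections of $|N|$). One minor inaccuracy: when $P=I$ the hypothesis \eqref{tpq} does not yield $T=N$; the case is nevertheless trivial because $|T|^{2q}=|N|^{2q}$ and the uniqueness of positive roots give $|T|=|N|$, which is exactly how the paper justifies the same reduction.
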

   \begin{proof}
Using the uniqueness of positive $n$th roots
of positive operators, we can assume without
loss of generality that $P \neq I_{\kk}$.

(i)\&(ii) We begin by proving that $P$
commutes with $|N|$. We will first consider
the case where $p\in \natu$. Let
$\varPhi\colon \ogr(\kk)\to \ogr(\hh)$ be as
in \eqref{obmiar} with $R\in \ogr(\hh,\kk)$
defined by $Rh=h$ for $h\in \hh$. Clearly
$R^*f=Pf$ for $f \in \kk$. Applying
Theorem~\ref{L-R} to $A=N^p$ and $B=N^q$
yields
   \begin{align}\label{5gw}
\varPhi(N^{*p}N^p) \Ge
\sslim_{\varepsilon\downarrow 0} \,
\varPhi(N^{*p}N^q)(\varPhi(N^{*q}N^q)+\varepsilon
I)^{-1} \varPhi(N^{*q}N^p).
   \end{align}
Let $E_{T}$ be the spectral measure of $T$.
The Stone-von Niemann calculus yields
   \allowdisplaybreaks
   \begin{align} \notag
\varPhi(N^{*p}N^q)(\varPhi(N^{*q}N^q)+\varepsilon
I)^{-1}\varPhi(N^{*q}N^p) &
\overset{\eqref{tpq}} =
T^{*p}T^q(T^{*q}T^{q}+\varepsilon
I)^{-1}T^{*q}T^p
   \\ \label{kra-kru}
& \hspace{1ex} =\int_\cbb
\frac{|z|^{2(p+q)}}{|z|^{2q}+\varepsilon}E_{T}(\D
z), \quad \varepsilon > 0.
   \end{align}
Applying Lebesgue's monotone convergence
theorem, we deduce that (use $p\Ge 1$)
   \allowdisplaybreaks
   \begin{align} \notag
\lim_{\varepsilon\downarrow 0} &
\langle\varPhi(N^{*p}N^q)(\varPhi(N^{*q}N^q)+\varepsilon
I)^{-1}\varPhi(N^{*q}N^p)h, h\rangle
   \\ \notag
& \hspace{.2ex}\overset{\eqref{kra-kru}} =
\lim_{\varepsilon\downarrow 0} \int_\cbb
\frac{|z|^{2(p+q)}}{|z|^{2q}+\varepsilon}\langle
E_{T}(\D z)h,h\rangle
   \\  \label{lyncylk}
& \hspace{1.2ex} =\int_\cbb {|z|^{2p}}\langle E_{T}(\D
z)h, h\rangle = \langle |T|^{2p}h,h\rangle, \quad h
\in \hh.
   \end{align}
Combined with \eqref{5gw}, this implies that
   \begin{align} \label{trepi}
|T|^{2p}\Le \varPhi(N^{*p}N^p).
   \end{align}
Let $\hat T\in \ogr(\kk)$ be the normal
operator defined by $\hat T = T \oplus 0$,
where $0$ is the zero operator on
$\kk\ominus \hh$. Then by Theorem~\ref{hans}
applied to the operator monotone function
$f(t)=t^{\frac{p}{q}}$, we see that
   \begin{align*}
|\hat T|^{2p} \overset{\eqref{trepi}}{\Le}
PN^{*p}N^p P = P (|N|^{2q})^{\frac{p}{q}}P
\Le (P|N|^{2q}P)^\frac{p}{q}
\overset{\eqref{tpq}}= |\hat T|^{2p},
   \end{align*}
and hence $P (|N|^{2q})^{\frac{p}{q}}P =
(P|N|^{2q}P)^\frac{p}{q}$. Because $0<\frac{p}{q} <
1$, the ``moreover'' part of Theorem~\ref{hans}
implies that $P$ commutes with $|N|^{2q}$. Since
$|N|^{2q}$ and $|N|$ have the same commutant, $P$
commutes with $|N|$ whenever $p\in \natu$.

Next, we show that $P$ commutes with $U^{q-p}$ still
under the assumption that $p\in \natu$. It follows
from Lemma~\ref{nwer} and \eqref{tpq} that statement
(i) is valid in the case where $p\in \natu$. Combined
with \cite[Lemma~3.2]{P-S-roots23}, this yields
   \begin{align} \notag
\hat T^{*(p+q)} \hat T^{p+q} & = (\hat T^{*p} \hat
T^{q})(\hat T^{*q} \hat T^{p}) \overset{\eqref{tpq}} =
(P N^{*p} N^{q}P)(P N^{*q} N^{p}P)
   \\ \label{snug}
&\Le P N^{*p} N^{q}N^{*q} N^{p}P = P N^{*(p+q)}
N^{p+q}P \overset{\mathrm{(i)}} = \hat T^{*(p+q)} \hat
T^{p+q}.
   \end{align}
Since by \eqref{tpq}, $P N^{*q} N^{p}P=\hat T^{*q}
\hat T^{p}$, we see that the operator $P N^{*q}
N^{p}P$ is normal. It follows from \eqref{snug} and
Lemma~\ref{rudec} that $P$ commutes with $N^{*p}
N^{q}$. By Lemma~\ref{npq}, $P$ commutes with
$U^{q-p}$ and $N^{q-p}$, so (ii) holds in the case
where~$p\in \natu$.

If $p=0$, then arguing as in \eqref{snug},
we get
   \begin{align*}
\hat T^{*q} \hat T^q \overset{\eqref{tpq}} = (P
N^{*q}P)(P N^{q}P) \Le P N^{*q}N^{q}P
\hspace{.8ex}\overset{\eqref{tpq}} = \hat T^{*q} \hat
T^{q}.
   \end{align*}
Hence, by Lemma~\ref{rudec}, $P$ commutes
with $N^q$, so by Lemma~\ref{npq}, $P$
commutes with $|N|$, $U^{q}$ and $N^{q}$,
and thus (ii) is also valid in the case
where $p=0$. This, together with \eqref{tpq}
and Lemma~\ref{nwer} applied to $q$ in place
of $p$, implies that (i) holds as well in
the case where $p=0$.

In summary, we have shown that statements
(i) and (ii) are valid without additional
restrictions on $p$.

(iii) By (ii), $P\nul(|N|)\subseteq
\nul(|N|)$. However, $\nul(N)=\nul(|N|)$, so
$P\nul(N)\subseteq \nul(N)$. Therefore
$\nul(N)$ reduces $P$, or equivalently
$P_{\nul(N)}P = PP_{\nul(N)}$, where
$P_{\nul(N)}\in \ogr(\kk)$ is the orthogonal
projection of $\kk$ onto $\nul(N)$. This
implies that $P_{\nul(N)}P$ and
$(I-P_{\nul(N)})P$ are orthogonal
projections of $\kk$ onto $\hh\cap\nul(N)$
and $\hh\cap\overline{\ran(N)}$,
respectively, which yields (iii).
   \end{proof}
   \begin{proof}[Proof of Theorem~\ref{main1}]
First, we show that $P$ commutes with $|N|$.
We consider three cases that cover all
logical possibilities. We can also assume,
without loss of generality, that $P \neq
I_{\kk}$.

{\sc Case 1.} $p+q>r$.

First, we show that
   \begin{align} \label{ogra}
|T|^{2(p+q-r)} \Le P |N|^{2(p+q-r)}|_\hh.
   \end{align}
For, let $\varPhi\colon \ogr(\kk)\to
\ogr(\hh)$ be as in \eqref{obmiar} with
$R\in \ogr(\hh,\kk)$ defined by $Rh=h$ for
$h\in \hh$. Applying Theorem~\ref{L-R} to
$A=U^{p- q}|N|^{p+q-r}$ and $B=|N|^r$, we
get
   \begin{align*}
\varPhi(|N|^{2(p+q-r)})\Ge
\sslim_{\varepsilon\downarrow 0}\,
\varPhi(N^{*p}N^q)(\varPhi(|N|^{2r})+\varepsilon
I)^{-1}\varPhi(N^{*q}N^p).
   \end{align*}
Therefore, using \eqref{pukur} and the
Stone-von Neumann calculus, we obtain
   \begin{align*}
P |N|^{2(p+q-r)}|_\hh \Ge
\sslim_{\varepsilon\downarrow 0}\,
T^{*p}T^q(|T|^{2r}+\varepsilon I)^{-1}T^{*q}T^p =
\sslim_{\varepsilon \downarrow 0} \int_\comp
\frac{|z|^{2(p+q)}} {|z|^{2r}+\varepsilon} E_{T}(\D
z),
   \end{align*}
where $E_{T}$ is the spectral measure of $T$. Arguing
as in \eqref{lyncylk}, we get \eqref{ogra}.

Let $\hat T\in \ogr(\kk)$ be the normal operator
defined by $\hat T = T \oplus 0$, where $0$ is the
zero operator on $\kk\ominus \hh$. By our assumptions,
$0 < \frac{p+q-r}{r}<1$. Hence, we can apply
Theorem~\ref{hans} to the operator monotone function
$f(t)=t^{\frac{p+q-r}{r}}$ getting
   \allowdisplaybreaks
   \begin{align*}
|\hat{T}|^{2(p+q-r)} & \overset{\eqref{ogra}}{\Le} P
|N|^{2(p+q-r)}P =P (|N|^{2r})^\frac{p+q-r}{r}P
   \\
& \hspace{.5ex} \overset{\eqref{Han-inq}}{\Le} (P
|N|^{2r}P)^\frac{p+q-r}{r} \overset{\eqref{pukur}} =
(|\hat{T}|^{2r})^\frac{p+q-r}{r} =
|\hat{T}|^{2(p+q-r)}.
   \end{align*}
Since equality holds in the Hansen
inequality \eqref{Han-inq}, the ``moreover''
part of Theorem~\ref{hans} implies that $P$
commutes with $|N|^{2r}$ and consequently
with $|N|$.

 {\sc Case 2.} $p+q < r$.

It follows from \cite[Lemma~3.2]{P-S-roots23} and
Theorem \ref{hans} applied to the operator monotone
function $f(t)=t^{\frac{p+q}{r}}$ that (cf.\
\eqref{snug})
   \allowdisplaybreaks
   \begin{align} \notag
|\hat{T}|^{2(p+q)} & =(\hat T^{*p}\hat
T^q)(\hat T^{*q} \hat T^p)
\overset{\eqref{pukur}} =
(PN^{*p}N^qP)(PN^{*q}N^p P)
    \\ \notag
& \Le PN^{*p}N^qN^{*q}N^pP=
P(|N|^{2r})^{\frac{p+q}{r}}P
   \\  \label{jwes}
& \hspace{-1.1ex} \overset{\eqref{Han-inq}}\Le
(P|N|^{2r}P)^{\frac{p+q}{r}} \overset{\eqref{pukur}} =
(|\hat{T}|^{2r})^{\frac{p+q}{r}} = |\hat{T}|^{2(p+q)}.
   \end{align}
This means that equality holds in the Hansen
inequality \eqref{Han-inq}. Hence applying
the ``moreover'' part of Theorem~\ref{hans},
we see that $P$ commutes with $|N|^{2r}$ and
consequently with $|N|$.

{\sc Case 3.} $p+q=r$.

Arguing as in \eqref{jwes}, we deduce that
   \begin{align} \notag
|\hat{T}|^{2(p+q)} & \overset{\eqref{pukur}}
= (PN^{*p}N^qP) (PN^{*q}N^pP)
   \\  \label{fdsa}
& \hspace{1ex} \Le PN^{*p}N^qN^{*q}N^pP= P|N|^{2r}P
\overset{\eqref{pukur}} = |\hat{T}|^{2(p+q)}.
   \end{align}
Since by \eqref{pukur}, $PN^{*q}N^p P = \hat
T^{*q} \hat T^p$, we see that $PN^{*q}N^p P$
is normal. In view of \eqref{fdsa} and
Lemma~\ref{rudec}, this implies that $P$
commutes with $N^{*q}N^p$, thus with
$|N^{*q}N^p|^2=|N|^{2(p+q)}$, and finally
with $|N|$.

In summary:\ $P$ commutes with $|N|$. This fact,
together with Lemma~\ref{nwer} shows that (i) holds.
Since, by assumption, $(p,q)\in \varXi$ and $p-q\neq
0$, the quantity $d$ is well defined. We now prove
that $P$ commutes with $U^d$. Using
Lemma~\ref{wurnik}, we get
   \begin{align} \label{kuket}
\text{$P$ commutes with $U^{m-n}$ for every $(m,n)\in
\varXi$.}
   \end{align}
Applying the well-ordering principle (see
Footnote~\ref{fitupi}), one can show that there exists
a finite nonempty subset $\varXi_0$ of $\varXi$ such
that
   \begin{align*}
\gcd\{m-n\colon (m,n)\in \varXi\} = \gcd\{m-n\colon
(m,n)\in \varXi_0\}.
   \end{align*}
By \cite[Theorem~4, p.\ 10]{Sh67}, for every
$(m,n)\in\varXi_0$, there exists $k_{m,n}\in \zbb$
such that
   \begin{align*}
d=\sum_{(m,n) \in \varXi_0} (m-n)k_{m,n}.
   \end{align*}
Combined with \eqref{kuket}, this implies
that $P$ commutes with $U^d$. Since, by
\eqref{pdu}, $U$ commutes with $|N|$, we
conclude that $P$ commutes with $N^{d}$. An
application of Lemma~\ref{npq} completes the
proof.
   \end{proof}
   \section{\label{Sec.6w}Proofs of Theorems~\ref{hypth}
and \ref{dsaqw}}
   We begin by characterizing the sets $G$ of
monomials in $t$ and $t^*$ that generate a commutative
unital $C^*$-algebra $\ascr=C^*(t)$. Set
   \begin{align} \label{saqur}
\mathfrak{G}_k = \big\{z \in \cbb\colon z^k = 1
\big\}, \quad k \in \natu,
   \end{align}
and define $\sgn(z)=\frac{z}{|z|}$ for $z \in \cbb
\setminus \{0\}$ and $\sgn(0)=0$.
   \begin{lemma}\label{winjkw}
Let $\ascr$ be a commutative unital $C^*$-algebra
generated by $t\in \ascr$ and let $G=\{t^{*m} t^n
\colon (m,n) \in \varXi\}$ with $\varXi
\subseteq\zbb_+^2$. Assume $\varXi_0 := \{(m,n)\in
\varXi\colon m\neq n\}$ is nonempty. Set
$X=\sigma(t)$. Then the following conditions are
equivalent{\em :}
   \begin{enumerate}
   \item[(i)] $\ascr=C^*(G)$,
   \item[(ii)] $\ascr=C^*(G_0)$, where
$G_0=\{t^{*m} t^n \colon (m,n) \in \varXi_0\}$,
   \item[(iii)] for any distinct points $z_1,z_2$
in $X$ with $|z_1|=|z_2|$ there exists $(m,n)\in
\varXi_0$ such that for every $u\in
\mathfrak{G}_{|n-m|}$, $\sgn(z_1) \neq u \sgn(z_2)$,
   \item[(iv)] the map $\varphi\colon
X \to Y$ is injective, where $Y:=
\smallcross_{(m,n)\in \varXi_0} Y_{m,n}$ with
$Y_{m,n}=\cbb$ and $\varphi(z) = \{\bar z^{m}
z^n\}_{(m,n)\in \varXi_0}$ for $z \in X$.
   \end{enumerate}
   \end{lemma}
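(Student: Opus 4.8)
The plan is to transport the whole problem to $C(X)$ via the Gelfand transform and then read off each condition as a separation-of-points statement. Since $\ascr=C^*(t)$ is commutative and unital, the Gelfand transform identifies $\ascr$ with $C(\mathfrak{M}_{\ascr})$, while $\widehat{t}$ is a homeomorphism of $\mathfrak{M}_{\ascr}$ onto $X=\sigma(t)$; composing these I identify $\ascr$ with $C(X)$ so that $t$ corresponds to $\xi$ (see \eqref{ksik}) and $t^{*m}t^n$ to the function $X\ni z\mapsto\bar z^m z^n$. Writing $z=|z|\sgn(z)$ gives $\bar z^m z^n=|z|^{m+n}\sgn(z)^{n-m}$ for $z\neq0$, which separates the two relevant mechanisms: the radial part $|z|^{m+n}$ and the angular part $\sgn(z)^{n-m}$.

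First I would settle the elementary equivalence (iii)$\Leftrightarrow$(iv). Condition (iv) asserts that the family $\{\bar z^m z^n\}_{(m,n)\in\varXi_0}$ separates the points of $X$. For distinct $z_1,z_2\in X$ with $|z_1|\neq|z_2|$, any single $(m,n)\in\varXi_0$ already separates them, since $|\bar z^m z^n|=|z|^{m+n}$ with $m+n\Ge1$ and $x\mapsto x^{m+n}$ is injective on $[0,\infty)$; hence $\varphi$ can fail to be injective only on a pair with $|z_1|=|z_2|=:\eta>0$. For such a pair, with $\omega_j=\sgn(z_j)$, one has $\bar z_1^m z_1^n=\bar z_2^m z_2^n\Leftrightarrow\omega_1^{\,n-m}=\omega_2^{\,n-m}\Leftrightarrow\omega_1\omega_2^{-1}\in\mathfrak{G}_{|n-m|}$. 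Negating, $\varphi$ is injective precisely when every such pair admits some $(m,n)\in\varXi_0$ with $\sgn(z_1)\neq u\,\sgn(z_2)$ for all $u\in\mathfrak{G}_{|n-m|}$, which is exactly (iii).

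Next I would prove (ii)$\Leftrightarrow$(iv) by the complex Stone--Weierstrass theorem. Being a unital $C^*$-subalgebra of $C(X)$, $C^*(G_0)$ is self-adjoint and contains each conjugate function $z\mapsto\bar z^n z^m$ alongside $z\mapsto\bar z^m z^n$, so $C^*(G_0)=C(X)$ iff $C^*(G_0)$ separates points. Since agreement of all $\bar z^m z^n$ (over $(m,n)\in\varXi_0$) at two points automatically forces agreement of their conjugates, $C^*(G_0)$ separates points iff the family in (iv) does, i.e.\ iff $\varphi$ is injective. Finally, for (i)$\Leftrightarrow$(ii) I would prove the stronger equality $C^*(G)=C^*(G_0)$, which uses $\varXi_0\neq\emptyset$ and makes both implications immediate because $G_0\subseteq G$. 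Only $C^*(G)\subseteq C^*(G_0)$ needs work, and for this it suffices to show that every diagonal monomial $\bar z^r z^r=|z|^{2r}$ lies in $C^*(G_0)$. Fixing $(m_0,n_0)\in\varXi_0$ and setting $s=m_0+n_0\Ge1$, self-adjointness of $C^*(G_0)$ gives $|z|^{2s}=(\bar z^{m_0}z^{n_0})(\bar z^{n_0}z^{m_0})\in C^*(G_0)$; since this function factors through $\rho(z)=|z|$ and $\eta\mapsto\eta^{2s}$ is injective on the compact set $R=\{|z|\colon z\in X\}\subseteq[0,\infty)$, Stone--Weierstrass in $C(R)$ shows that the unital $C^*$-algebra generated by $|z|^{2s}$ comprises every continuous function of $|z|$, and in particular contains $|z|^{2r}$.

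The main obstacle I anticipate is this last step (i)$\Leftrightarrow$(ii): recovering the diagonal monomials requires the radial Stone--Weierstrass argument, i.e.\ factoring functions through the modulus map $\rho$, and it is here that the hypothesis $\varXi_0\neq\emptyset$ is genuinely needed, since without a non-diagonal pair one cannot manufacture a positive power of $|z|$ inside $C^*(G_0)$. The remaining two equivalences are comparatively routine: (iii)$\Leftrightarrow$(iv) is a roots-of-unity computation after the modulus reduction, and (ii)$\Leftrightarrow$(iv) is a direct application of Stone--Weierstrass.
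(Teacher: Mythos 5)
Your proposal is correct, and its core is the same as the paper's: reduce to $\ascr=C(X)$, $t=\xi$ via the Gelfand transform, apply the complex Stone--Weierstrass theorem to convert generation into separation of points, and then split each monomial as $\bar z^m z^n=|z|^{m+n}\sgn(z)^{n-m}$ so that pairs with $|z_1|\neq|z_2|$ are separated by any non-diagonal monomial while pairs with $|z_1|=|z_2|$ reduce to the roots-of-unity condition (iii). The only genuine structural difference is in (i)$\Leftrightarrow$(ii): the paper closes the cycle (i)$\Rightarrow$(iv)$\Rightarrow$(iii)$\Rightarrow$(ii)$\Rightarrow$(i), getting (i)$\Rightarrow$(ii) for free from the observation that a diagonal monomial $|z|^{2m}$ can never separate two points of equal modulus, so if $G$ separates points then $G_0$ already does; you instead prove the stronger unconditional identity $C^*(G)=C^*(G_0)$ by manufacturing $|z|^{2s}=(\bar z^{m_0}z^{n_0})\overline{(\bar z^{m_0}z^{n_0})}$ inside $C^*(G_0)$ and running a radial Stone--Weierstrass argument on $R=\{|z|\colon z\in X\}$ to recover all $|z|^{2r}$. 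Both arguments are valid; yours yields a slightly stronger conclusion (the two algebras coincide whether or not they equal $C(X)$) at the cost of an extra approximation step, whereas the paper's cyclic scheme is more economical and never needs to exhibit the diagonal monomials as elements of $C^*(G_0)$.
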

   \begin{proof}
By \cite[Theorem~11.19]{Rud73}, there is no loss of
generality in assuming that $\ascr=C(X)$ and $t=\xi$,
where $\xi$ is as in \eqref{ksik}. For $(m,n)\in
\zbb_+^2$, define the function $\varphi_{m,n}\colon X
\to \cbb$ by $\varphi_{m,n}(z)=\bar z^{m} z^n$ for all
$z \in X$.

(i)$\Rightarrow$(iv) In view of the Stone-Weierstrass
theorem (see Theorem~\ref{wtysa}), it suffices to show
that if $G$ separates the points of $X$, then
$\varphi$ is injective. Assume $G$ separates the
points of $X$. Let $z_1, z_2$ be distinct points in
$X$. If $|z_1| \neq |z_2|$, then for every $(m,n)\in
\zbb_+^2 \setminus \{(0,0)\}$ (in particular, for
every $(m,n)\in \varXi_0$), $|\varphi_{m,n}(z_1)| \neq
|\varphi_{m,n}(z_2)|$ so $\varphi_{m,n}(z_1) \neq
\varphi_{m,n}(z_2)$. Therefore $\varphi(z_1) \neq
\varphi(z_2)$. Suppose now that $|z_1| = |z_2|$. Since
$G$ separates the points of $X$, there exists
$(m,n)\in \varXi$ such that $\varphi_{m,n}(z_1) \neq
\varphi_{m,n}(z_2)$. Then $(m,n)\in \varXi_0$ (because
otherwise $m=n \Ge 0$ and thus $\varphi_{m,n}(z_1) =
|z_1|^{2m} = |z_2|^{2m} = \varphi_{m,n}(z_2)$, a
contradiction). Hence $\varphi(z_1) \neq
\varphi(z_2)$.

(iv)$\Rightarrow$(iii) Let $z_1,z_2$ be distinct
points in $X$ such that $|z_1|=|z_2|$. By (iv), there
exists $(m,n)\in \varXi_0$ such that
$\varphi_{m,n}(z_1) \neq \varphi_{m,n}(z_2)$, or
equivalently that $\frac{\sgn(z_1)}{\sgn(z_2)} \notin
\mathfrak{G}_{|n-m|}$. This implies that there is no
$u\in \mathfrak{G}_{|n-m|}$ such that $\sgn(z_1) = u
\sgn(z_2)$.

(iii)$\Rightarrow$(ii) According to the
Stone-Weierstrass theorem, it is enough to show that
$G_0$ separates the points of $X$. In view of proof of
(i)$\Rightarrow$(iv), we can focus on the case of
distinct points $z_1, z_2$ in $X$ such that
$|z_1|=|z_2|$. This case, however, has been settled in
the paragraph above.

(ii)$\Rightarrow$(i) Trivial.
   \end{proof}
We now give a criterion for when a set of monomials in
$t$ and $t^*$ generates $\ascr$.
   \begin{lemma} \label{cxduns}
Let $\ascr$ be a commutative unital $C^*$-algebra
generated by $t\in \ascr$ and let $G=\{t^{*m}t^n\colon
(m,n)\in \varXi\}$ with nonempty $\varXi\subseteq
\zbb_+^2$. Set $X=\sigma(t)$. Then the following hold
$($see {\em Footnote~\ref{fitupi}}$)${\em :}
   \begin{enumerate}
   \item[(i)] if $\gcd\{m-n\colon (m,n)\in
\varXi\}=1$, then $\ascr=C^*(G)$,
   \item[(ii)] if $\ascr=C^*(G)$ and
   \begin{align} \label{fyrova}
\forall \; k\in \natu \setminus \{1\} \; \exists
z_1,z_2\in X\setminus \{0\} \colon z_1\neq z_2 \text{
and } \frac{z_1}{z_2} \in \mathfrak{G}_k,
   \end{align}
then $\varXi_0:=\{(m,n) \in \varXi\colon m\neq n\}
\neq \emptyset$ and $\gcd\{m-n\colon (m,n)\in
\varXi\}=1$.
   \end{enumerate}
   \end{lemma}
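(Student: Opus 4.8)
The plan is to reduce, exactly as in the proof of Lemma~\ref{winjkw}, to the case $\ascr = C(X)$ with $t = \xi$ via the Gelfand isomorphism \cite[Theorem~11.19]{Rud73}, and then to read both assertions off the point-separation criterion encoded in condition~(iii) of Lemma~\ref{winjkw}. Throughout I would use the factorization $\bar z^m z^n = |z|^{m+n}\sgn(z)^{n-m}$ for $z \in X$, together with the elementary observation that $\gcd\{m-n\colon (m,n)\in\varXi\} = \gcd\{|m-n|\colon (m,n)\in\varXi_0\}$, since the diagonal pairs contribute only zeros and the $\gcd$ is insensitive to signs.

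For~(i), note first that writing the $\gcd$ already presupposes $\varXi_0\neq\emptyset$ (Footnote~\ref{fitupi}), so Lemma~\ref{winjkw} applies and it suffices to verify its condition~(iii). Given distinct $z_1,z_2\in X$ with $|z_1|=|z_2|$, distinctness forces $|z_1|=|z_2|\neq 0$, so $w:=z_1/z_2=\sgn(z_1)/\sgn(z_2)$ is a point of the unit circle with $w\neq 1$. If, contrary to~(iii), $w\in\mathfrak{G}_{|n-m|}$ for every $(m,n)\in\varXi_0$, then the multiplicative order of $w$ divides each $|m-n|$, hence divides $\gcd\{|m-n|\colon(m,n)\in\varXi_0\}=1$; thus $w=1$, a contradiction. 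Hence~(iii) holds, and Lemma~\ref{winjkw} yields $\ascr=C^*(G)$.

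For~(ii), I would first rule out $\varXi_0=\emptyset$. If all pairs in $\varXi$ were diagonal, every element of $G$ would be a function of $|z|$ alone; applying \eqref{fyrova} with $k=2$ produces distinct $z_1,z_2\in X\setminus\{0\}$ with $z_1/z_2=-1$, whence $|z_1|=|z_2|$, and these points are not separated by $G$, contradicting $\ascr=C^*(G)=C(X)$ by the Stone--Weierstrass theorem (see Theorem~\ref{wtysa}). Thus $\varXi_0\neq\emptyset$ and $d:=\gcd\{m-n\colon(m,n)\in\varXi\}\Ge 1$ is well defined, so Lemma~\ref{winjkw}(iii) becomes available. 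Assuming $d\Ge 2$, I would invoke \eqref{fyrova} with $k=d$ to obtain distinct $z_1,z_2\in X\setminus\{0\}$ with $(z_1/z_2)^{d}=1$, whence $|z_1|=|z_2|$. Since $d\mid|n-m|$ for every $(m,n)\in\varXi_0$, we get $z_1/z_2\in\mathfrak{G}_{|n-m|}$ for all $(m,n)\in\varXi_0$, in direct contradiction to condition~(iii) of Lemma~\ref{winjkw}. Therefore $d=1$.

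The modulus/sign bookkeeping is routine; the only genuinely substantive step is the number-theoretic passage between ``$w$ lies in every $\mathfrak{G}_{|m-n|}$'' and ``the order of $w$ divides the $\gcd$'', used in~(i) and mirrored in~(ii) via $d\mid|n-m|$. I expect the main obstacle to be organizational rather than technical: one must ensure that the $\gcd$ is legitimately defined (i.e.\ $\varXi_0\neq\emptyset$) before any appeal to Lemma~\ref{winjkw}, which is precisely why the $\varXi_0\neq\emptyset$ step in~(ii) has to be argued directly through Stone--Weierstrass rather than through that lemma.
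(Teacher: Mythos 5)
Your proposal is correct and follows essentially the same route as the paper: reduce to $\ascr=C(X)$, $t=\xi$, verify condition (iii) of Lemma~\ref{winjkw} for part (i), and for part (ii) use \eqref{fyrova} with $k=2$ to force $\varXi_0\neq\emptyset$ and with $k=d$ to force $d=1$. The only cosmetic differences are that the paper invokes $\bigcap_{k\in J}\mathfrak{G}_k=\mathfrak{G}_{\gcd(J)}$ where you argue directly with the order of $w$, and it contradicts condition (iv) of Lemma~\ref{winjkw} where you contradict the equivalent condition (iii).
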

   \begin{proof}
(i) Suppose that $\gcd\{m-n\colon (m,n)\in
\varXi\}=1$. Then $\varXi_0\neq \emptyset$ (see
Footnote~\ref{fitupi}). By Lemma~\ref{winjkw}, it
suffices to show that condition (iii) of this lemma is
valid. Suppose, to the contrary, that there exist
distinct points $z_1,z_2$ in $X$ such that
$|z_1|=|z_2|$ and for every $(m,n)\in \varXi_0$,
$\frac{\sgn(z_1)}{\sgn(z_2)} \in
\mathfrak{G}_{|n-m|}$. Since, by
\cite[Lemma~3.1]{St-St17}, $\bigcap_{k \in J}
\mathfrak{G}_{k} = \mathfrak{G}_{\gcd(J)}$ for any
nonempty set $J \subseteq \natu$, and
$\gcd\{|m-n|\colon (m,n)\in \varXi_0\}=1$, we have
$\sgn(z_1)=\sgn(z_2)$, which contradicts the
assumption that $z_1\neq z_2$.

(ii) Assume that $\ascr=C^*(G)$ and \eqref{fyrova}
holds. As in the proof of Lemma~\ref{winjkw}, we can
assume that $\ascr=C(X)$ and $t=\xi$. Suppose,
contrary to our claim, that $\varXi_0 = \emptyset$. By
\eqref{fyrova}, there exist distinct $z_1,z_2\in
X\setminus \{0\}$ such that $\frac{z_1}{z_2} \in
\mathfrak{G}_2$. Then $|z_1|=|z_2|$ and thus
$\varphi_{m,n}(z_1) = \varphi_{m,n}(z_2)$ for all
$(m,n)\in \varXi$. Hence, by Theorem~\ref{wtysa},
$\ascr\neq C^*(G)$, a contradiction. Since
$\varXi_0\neq \emptyset$, we can define
$d:=\gcd\{m-n\colon (m,n)\in \varXi\} \in \natu$. It
remains to show that $d=1$. Suppose, to the contrary,
that $d \Ge 2$. Because of \eqref{fyrova}, there exist
distinct $z_1,z_2\in X\setminus \{0\}$ such that
$w:=\frac{z_1}{z_2} \in \mathfrak{G}_d$. By
Lemma~\ref{winjkw}(iv), there exists $(m,n)\in
\varXi_0$ such that $\varphi_{m,n}(z_1) \neq
\varphi_{m,n}(z_2)$. Hence $|w|=1$, $w^d=1$ and
$w^{n-m} \neq 1$. On the other hand, there exists
$l\in \zbb \setminus \{0\}$ such that $n-m=ld$,~so
$1=(w^{d})^l=w^{n-m}\neq 1$, a contradiction.
   \end{proof}
   \begin{rem}
Regarding Lemma~\ref{cxduns}, note that
condition~\eqref{fyrova} implies that $X$ is infinite.
To prove this, observe that condition \eqref{fyrova}
is equivalent to
   \begin{align} \label{dflrea}
\forall \; k\in \natu \setminus \{1\} \; \exists w\in
\mathfrak{G}_k\setminus \{1\}\; \exists z\in
X\setminus \{0\} \colon wz\in X.
   \end{align}
Suppose, to the contrary, that $X$ is finite. Then, by
\eqref{dflrea}, there exists $z_0 \in X \setminus
\{0\}$ such that the set $\{p\in \mathbb{P}\colon
\exists w\in \mathfrak{G}_p\setminus \{1\} \colon wz_0
\in X\}$ is infinite, where $\mathbb{P}$ stands for
the set of prime numbers. Using the fact that the sets
$\{\mathfrak{G}_p\setminus \{1\}\}_{p\in \mathbb{P}}$
are pairwise disjoint, we conclude that $X$ is
infinite, which is a contradiction. \hfill
$\diamondsuit$
   \end{rem}
   \begin{proof}[Proof of Theorem~\ref{hypth}]
Assume that $G$ generates $\ascr$. Since the
$C^*$-algebra $\ascr$ is commutative, $t$ is normal.
Replacing $t$ by $t^*$ if necessary, we can assume
that $p<q$. In view of Theorem~\ref{murzeqiv}, it
remains to show that condition (iii) of this theorem
is valid. Let $\pi\colon \ascr\to \ogr(\hh)$ be a
representation and $\varPhi\colon \ascr \to \ogr(\hh)$
be a UCP map such that
   \begin{align} \label{wuir}
\pi(g) = \varPhi(g), \quad g\in G.
   \end{align}
It follows from the Stinespring dilation theorem (see
\cite[Theorem~1]{Sti55}) that there exist a Hilbert
space $\kk\supseteq \hh$ and a representation
$\rho\colon \ascr \to \ogr(\kk)$ such that
   \begin{align} \label{viro}
\varPhi(a)= P\rho(a)|_{\hh}, \quad a \in
\ascr,
   \end{align}
where $P\in \ogr(\kk)$ is the orthogonal projection of
$\kk$ onto $\hh$. Set $T=\pi(t)$ and $N=\rho(t)$. Then
$T$ and $N$ are normal operators and by \eqref{wuir}
and \eqref{viro}, $T^{*m}T^n = PN^{*m}N^n|_{\hh}$ for
all $(m,n) \in \varXi$. Since $\rho$ is a
representation of $\ascr$, we deduce that $\tilde
X:=\sigma(N) \subseteq \sigma(t)$. Set $\tilde Y=
\smallcross_{(m,n)\in \varXi_0} \tilde Y_{m,n}$ with
$\tilde Y_{m,n}=\{\bar z^{m} z^n\colon z\in \tilde
X\}$. Define the maps $\phi_{m,n}\colon \tilde X \to
\tilde Y_{m,n}$ ($(m,n)\in \varXi_0$) and $\phi\colon
\tilde X \to \tilde Y$ by $\phi_{m,n}(z)=\bar z^{m}
z^n$ and $\phi(z) = \{\phi_{m,n}(z)\}_{(m,n)\in
\varXi_0}$ for $z \in \tilde X$, where
$\varXi_0:=\{(m,n) \in \varXi\colon m\neq n\}$. Note
that for every $(m,n)\in \varXi_0$, $\sigma(N^{*m}N^n)
= \phi_{m,n}(\tilde X)=\tilde Y_{m,n}$ (see \cite[(14)
on p.\ 158]{Bir-Sol87}) and the spectral measure
$E_{m,n}\colon \borel(\tilde Y_{m,n}) \to \ogr(\hh)$
of the normal operator $N^{*m}N^n$ takes the form
$E_{m,n}=E_N \circ \phi_{m,n}^{-1}$, where $E_N$
stands for the spectral measure of $N$ (see
\cite[Theorem~6.6.4]{Bir-Sol87}). Since the spectral
measures $\{E_{m,n}\}_{(m,n)\in \varXi_0}$ commute,
there exists the product spectral measure $E$ of
$\{E_{m,n}\}_{(m,n)\in \varXi_0}$ defined on
$\borel(\tilde Y)$ (see, e.g.,
\cite[Proposition~4]{Sto87}). Let $\varOmega$ be any
finite nonempty subset of $\varXi_0$ and let
$\varDelta_{m,n} \in \borel(\tilde Y_{m,n})$ for
$(m,n)\in \varOmega$. Set $\varDelta_{m,n}=\tilde
Y_{m,n}$ for $(m,n)\in \varXi_0 \setminus \varOmega$.
Then
   \allowdisplaybreaks
   \begin{align*}
& E_N \circ \phi^{-1} \bigg(\bigcross_{(m,n)\in
\varXi_0} \varDelta_{m,n}\bigg) = E_N
\bigg(\bigcap_{(m,n)\in \varOmega} \phi_{m,n}^{-1}
(\varDelta_{m,n})\bigg)
   \\
&= \prod_{(m,n)\in \varOmega} \Big(E_N \circ
\phi_{m,n}^{-1}\Big) (\varDelta_{m,n}) =
\prod_{(m,n)\in \varOmega} E_{m,n} (\varDelta_{m,n}) =
E \bigg(\bigcross_{(m,n)\in \varXi_0}
\varDelta_{m,n}\bigg).
   \end{align*}
By uniqueness of product spectral measure and the
regularity of $E_N \circ \phi^{-1}$ (because of the
continuity of $\phi$), this implies that $E_N \circ
\phi^{-1}=E$ (see \cite[Proposition~4]{Sto87}). It
follows from Theorem~\ref{main1}(ii) that $P$ commutes
with $N^{|m-n|}$ for every $(m,n)\in \varXi_0$. In
view of Lemma~\ref{npq}, $P$ commutes with $N^{*m}N^n$
for every $(m,n)\in \varXi_0$. Therefore, $P$ commutes
with $E_{m,n}$ for every $(m,n)\in \varXi_0$, which
implies that $P$ commutes with $E$ (see
\cite[Proposition~4]{Sto87}). As a consequence, $P$
commutes with $E_N \circ \phi^{-1}$. However,
according to Lemma~\ref{winjkw}, $\phi$ is injective,
so the map $\phi^{-1} \colon \borel(\tilde Y) \ni
\varDelta \mapsto \phi^{-1}(\varDelta)\in
\borel(\tilde X)$ is surjective (see
\cite[Proposition~16]{C-S-s19}). Hence, $P$ commutes
with $E_N$, or equivalently $P$ commutes with
$N=\rho(t)$. Using the continuity of $\rho$ and the
fact that $C^*(t)=\ascr$, we conclude that $P$
commutes with $\rho$, or equivalently that $\hh$
reduces $\rho$. Thus, by \eqref{viro}, $\varPhi$ is a
representation of $\ascr$. Since $ G$ generates
$\ascr$ and both representations $\pi$ and $\varPhi$
are continuous, we infer from \eqref{wuir} that
$\pi=\varPhi$. This shows that condition (iii) of
Theorem~\ref{murzeqiv} is valid, so $G$ is hyperrigid.
   \end{proof}
We now explain the placement of the points $(p,q),
(r,r) \in \varXi$ from Theorem~\ref{hypth}.
   \begin{rem} \label{ytsq}
If $(r,r)$ lies on the line $m+n=p+q$ or below it,
i.e., $1\Le 2r \Le p+q$, then by
\cite[Theorem~5.2]{P-S22-b}, applied to $(2r,p+q)$ in
place of $(p,q)$, we deduce that for any nonzero
Hilbert space $\hh$, there exists a selfadjoint
operator $T\in \ogr(\hh)$ and a compactly supported
semispectral measure $F\colon \borel(\cbb) \to
\ogr(\hh)$ with $\supp(F) \subseteq \rbb$, which is
not a spectral measure, such that \allowdisplaybreaks
   \begin{align*}
T^{*m}T^n= \int_{\cbb} \bar z^m z^n F(\D z), \quad
(m,n)\in \varXi,
   \end{align*}
where $\varXi=\{(p,q),(r,r)\}$. Then $(\hh,T,F)$
satisfies the if-clause of \eqref{ftint} for any
compact subset $X$ of $\rbb$ containing $\sigma(T)
\cup \supp(F)$, with $G=\{\bar \xi^m\xi^n\colon
(m,n)\in \varXi\}$, but it does not satisfy the
then-clause ($\xi$ is as in \eqref{ksik}). A quick
inspection of the proof of
\textup{(i)}$\Leftrightarrow$\textup{(ii)} in
Lemma~\ref{prejyd} shows that the argument does not
require the assumption $C^*(G)=C(X)$. Hence $G$ is not
hyperrigid in $C(X)$. If $p+q$ is odd, then $G$
generates $C(X)$ and is not hyperrigid in $C(X)$. The
case where $p+q$ is even is more subtle and is not
covered by the present $X$. Namely, if $X\subseteq
\mathbb{R}$ is compact, $1\Le 2r<p+q$, $p+q$ is even,
and $G$ generates $C(X)$, then $G$ is
hyperrigid~in~$C(X)$.~\hfill $\diamondsuit$
   \end{rem}
   \begin{proof}[Proof of Theorem~\ref{dsaqw}]
Case (i) is a consequence of Lemma~\ref{cxduns}(i) and
Theorem~\ref{hypth}. To prove case (ii), we first show
that condition (iii) of Lemma~\ref{winjkw} is
satisfied. So by this lemma $\ascr=C^*(G)$. Finally,
we can use Theorem~\ref{hypth}.
   \end{proof}
   \section{\label{Sec.6}Proof of Theorem~\ref{jydgunr}}
We begin with some characterizations of hyperrigidity
for singly generated commutative unital $C^*$-algebras
that will be used later.
   \begin{lemma} \label{prejyd}
Let $X$ be a nonempty compact subset of $\cbb$ and $G$
be a set of generators of $C(X)$. Then the following
conditions are equivalent{\em :}
   \begin{enumerate}
   \item[(i)] $G$ is hyperrigid,
   \item[(ii)] for every Hilbert space $\hh$, every
normal operator $T\in \ogr(\hh)$ with spectrum in $X$
and every semispectral measure $F\colon \borel(X) \to
\ogr(\hh)$,
   \begin{align} \label{ftint}
f(T)=\int_{X} f \D F \;\; \forall f\in G \implies
\text{$F$ is the spectral measure of $T$,}
   \end{align}
   \item[(iii)] for all Hilbert spaces $\hh$ and $\kk$   such
that $\hh\subseteq \kk$, and all normal operators
$T\in \ogr(\hh)$ and $N\in \ogr(\kk)$ with spectrum in
$X$,
   \begin{align*}
f(T) = Pf(N)|_{\hh} \;\: \forall f\in G \implies
PN=NP,
   \end{align*}
where $P$ stands for the orthogonal projection of
$\kk$ onto $\hh$.
   \end{enumerate}
Moreover, conditions {\em (i)-(iii)} are still
equivalent regardless of whether the Hilbert spaces
considered in either of them are separable or not,
   \end{lemma}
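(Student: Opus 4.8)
The plan is to route everything through the reformulation of hyperrigidity in terms of the unique extension property, namely Theorem~\ref{murzeqiv}, which (for a generating set $G$ of $\ascr=C(X)$) asserts that $G$ is hyperrigid if and only if for every representation $\pi\colon C(X)\to\ogr(\hh)$ and every UCP map $\varPhi\colon C(X)\to\ogr(\hh)$ with $\pi(f)=\varPhi(f)$ for all $f\in G$ one has $\pi=\varPhi$. I would first record two dictionaries. Since $C(X)$ is singly generated by $\xi$ (with $\xi(z)=z$), the spectral theorem identifies representations $\pi$ of $C(X)$ on $\hh$ with normal operators $T:=\pi(\xi)$ having $\sigma(T)\subseteq X$, via $\pi(f)=f(T)$. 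Likewise, by Stinespring's theorem every UCP map $\varPhi\colon C(X)\to\ogr(\hh)$ has the form $\varPhi(f)=Pf(N)|_{\hh}$ for some Hilbert space $\kk\supseteq\hh$ and some normal $N\in\ogr(\kk)$ with $\sigma(N)\subseteq X$; equivalently $\varPhi(f)=\int_X f\,\D F$ for the semispectral measure $F(\varDelta)=PE_N(\varDelta)|_{\hh}$, where $E_N$ is the spectral measure of $N$. Under these identifications the hypothesis $\pi|_G=\varPhi|_G$ is exactly $f(T)=Pf(N)|_{\hh}$ for $f\in G$, and the conclusion $\pi=\varPhi$ amounts to $\hh$ reducing $N$ together with $T=N|_{\hh}$; this is what makes conditions (iii) and (i) match up.

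Concretely, for (i)$\Rightarrow$(iii) I would take $\hh\subseteq\kk$ and normal $T\in\ogr(\hh)$, $N\in\ogr(\kk)$ with spectra in $X$ satisfying $f(T)=Pf(N)|_{\hh}$ for $f\in G$, set $\pi(f)=f(T)$ and $\varPhi(f)=Pf(N)|_{\hh}$ (a representation and a UCP map, respectively, agreeing on $G$), and invoke Theorem~\ref{murzeqiv} to get $\pi=\varPhi$. Then $f\mapsto Pf(N)|_{\hh}$ is multiplicative, and applying multiplicativity to $f$ and $f^*$ forces $(I-P)\rho(f)P=0$ for the representation $\rho(f)=f(N)$, i.e.\ $\hh$ reduces $N$; taking $f=\xi$ yields $PN=NP$. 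For (iii)$\Rightarrow$(i) I would verify the unique extension property directly: given $\pi$ and a UCP $\varPhi$ with $\pi|_G=\varPhi|_G$, write $\pi(f)=f(T)$ and, by Stinespring, $\varPhi(f)=Pf(N)|_{\hh}$ with $N$ normal and $\sigma(N)\subseteq X$. Agreement on $G$ gives $f(T)=Pf(N)|_{\hh}$ for $f\in G$, so (iii) yields $PN=NP$; hence $\hh$ reduces $N$, $\varPhi(f)=f(N|_{\hh})$ is a representation, and since $\varPhi$ and $\pi$ are continuous $*$-homomorphisms agreeing on the generating set $G$ they coincide. By Theorem~\ref{murzeqiv}, $G$ is hyperrigid.

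It remains to link (ii) and (iii), which is the Naimark dilation dictionary. For (iii)$\Rightarrow$(ii), given a semispectral measure $F$ on $\borel(X)$ with $f(T)=\int_X f\,\D F$ for $f\in G$, I would dilate $F$ by Naimark's theorem to a spectral measure $E$ on $\borel(X)$ valued in $\ogr(\kk)$, $\kk\supseteq\hh$, with $F(\varDelta)=PE(\varDelta)|_{\hh}$, and put $N=\int_X\xi\,\D E$ (normal, $\sigma(N)=\supp E\subseteq X$); then $\int_X f\,\D F=Pf(N)|_{\hh}$, so the hypothesis becomes $f(T)=Pf(N)|_{\hh}$ for $f\in G$, whence $PN=NP$ by (iii). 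Thus $\hh$ reduces $N$, $F(\varDelta)=E(\varDelta)|_{\hh}$ is projection valued, and $f(T)=f(N|_{\hh})$ on $G$ forces $T=N|_{\hh}$ and therefore $F=E_T$. Conversely, for (ii)$\Rightarrow$(iii), given $\hh\subseteq\kk$ and normal $T,N$ with spectra in $X$ and $f(T)=Pf(N)|_{\hh}$ on $G$, I would set $F(\varDelta)=PE_N(\varDelta)|_{\hh}$, a semispectral measure on $\borel(X)$ with $\int_X f\,\D F=f(T)$ on $G$; condition (ii) makes $F$ the spectral measure of $T$, in particular projection valued, and Lemma~\ref{rudec} (using that both $N$ and the compression $PN|_{\hh}=T$ are normal, and that $F$ spectral gives $(PN^*P)(PNP)=PN^*NP$) shows $P$ commutes with $E_N$, hence $PN=NP$.

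The genuinely delicate input is upstream: the equivalence between hyperrigidity (a statement about sequences of UCP maps $\ogr(\hh)\to\ogr(\hh)$) and the unique extension property for single UCP maps $C(X)\to\ogr(\hh)$, i.e.\ Theorem~\ref{murzeqiv}. Everything in the present lemma is a translation through Stinespring and Naimark together with the commutation argument for the compression, and the main care is precisely the deduction that $\hh$ reduces $N$ (equivalently $PN=NP$) from multiplicativity of the compressed map, or from $F$ being projection valued; the adjoint device $(I-P)\rho(f)P=0$ (respectively Lemma~\ref{rudec}) is the clean tool for this. Finally, for the \emph{moreover} assertion I would note that neither Stinespring's nor Naimark's dilation requires separability, and that Theorem~\ref{murzeqiv} is itself stated without a separability hypothesis, so none of the three reformulations is sensitive to whether $\hh$ or $\kk$ is separable.
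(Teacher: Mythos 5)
Your proposal is correct and follows essentially the same route as the paper: both reduce hyperrigidity to the unique extension property via Theorem~\ref{murzeqiv}, translate UCP maps into compressions $Pf(N)|_{\hh}$ via Stinespring/Naimark, and extract $PN=NP$ from agreement at $\xi$ and $\bar\xi\xi$ (Lemma~\ref{rudec}, or your equivalent multiplicativity argument $(I-P)\rho(f)P=0$). The only organizational difference is that the paper obtains (i)$\Leftrightarrow$(ii) abstractly through Corollary~\ref{toeq-1} and then proves (i)$\Rightarrow$(iii)$\Rightarrow$(ii), whereas you prove (i)$\Leftrightarrow$(iii) and (ii)$\Leftrightarrow$(iii) directly; this is immaterial.
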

   \begin{proof}[Proof of Lemma~\ref{prejyd}]
Recall the description of representations of $C(X)$.
If $\pi\colon C(X) \to \ogr(\hh)$ is a representation
and $T:= \pi(\xi)$, where $\xi$ is as in \eqref{ksik},
then $T$ is normal and $\sigma(T) \subseteq
\sigma(\xi) = X$. Since $\pi(f)=f(T)$ for complex
polynomials $f$ in $\xi$ and $\bar \xi$, and both maps
$\pi$ and $f\mapsto f(T)$ are continuous on $C(X)$
(see \cite[Theorem~2.1.7]{Mur90} and
\cite[\S12.24]{Rud73}), the Stone--Weierstrass theorem
implies that $\pi(f)=f(T)$ for all $f\in C(X)$. The
converse implication is obvious.

(i)$\Leftrightarrow$(ii) follows from
Proposition~\ref{atrug}, Theorem~\ref{soptw}, and
Corollary~\ref{toeq-1} by identifying $X$ with
$\mathfrak{M}_{C(X)}$ and $\hat a$ with $a$ for $a\in
C(X)$.

(i)$\Rightarrow$(iii) Suppose $T\in \ogr(\hh)$ and
$N\in \ogr(\kk)$ are normal operators with spectrum in
$X$, $\hh\subseteq \kk$, and $f(T) = Pf(N)|_{\hh}$ for
all $f\in G$. Let $\pi\colon C(X)\to \ogr(\hh)$ be the
representation induced by $T$, and let $\varPhi\colon
C(X)\to \ogr(\hh)$ be the UCP map defined by
$\varPhi(f)= Pf(N)|_{\hh}$ for $f\in C(X)$. Since
$\pi|_G=\varPhi|_G$, Theorem~\ref{murzeqiv}~yields
   \begin{align} \label{fptry}
f(T)=Pf(N)|_{\hh}, \quad f\in C(X).
   \end{align}
Substituting $f=\xi$ and $f=\bar \xi \xi$ gives
$T=PN|_{\hh}$ and $T^*T=PN^*N|_{\hh}$. This yields
   \begin{align} \label{psptr}
(PN^*P)(PNP) = (T^*\oplus 0)(T\oplus 0) = T^*T\oplus 0
= PN^*NP,
   \end{align}
where $0$ denotes the zero operator on $\kk\ominus
\hh$. Since $N$ and $PNP$ are normal,
Lemma~\ref{rudec} gives $PN=NP$, which proves (iii).
Hence $\hh$ reduces $N$ and $T=N|_{\hh}$.

(iii)$\Rightarrow$(ii) Let $T\in \ogr(\hh)$ be a
normal operator with $\sigma(T)\subseteq X$, and let
$F\colon \borel(X) \to \ogr(\hh)$ be a semispectral
measure such~that
   \begin{align} \label{ftxf}
f(T)=\int_{X} f \D F, \quad f\in G.
   \end{align}
It follows from Naimark's dilation theorem (see
\cite[Theorem~6.4]{Ml78}) that there exist a Hilbert
space $\kk$ and a spectral measure $E\colon
\borel(X)\to \ogr(\kk)$ such that $\hh\subseteq \kk$,
   \begin{align} \label{fdpe}
F(\varDelta) = PE(\varDelta)|_{\hh} \; \forall
\varDelta \in \borel(X) \quad \& \quad \kk = \bigvee
\Big\{E(\varDelta)\hh\colon \varDelta \in
\borel(X)\Big\},
   \end{align}
where $P\in \ogr(\kk)$ is the orthogonal projection of
$\kk$ onto $\hh$. Note that if $\dim \hh \Le
\aleph_0$, then $\dim \kk \Le \aleph_0$ (cf.\
\cite[Proposition~1($\delta$)]{St-SzIII}). The
operator $N:=\int_{X} z E(\D z) $ is normal,
$\sigma(N) = \supp(E) \subseteq X$ (see
\cite[Theorem~6.6.2]{Bir-Sol87}), and, by \eqref{ftxf}
and \eqref{fdpe}, $f(T) = Pf(N)|_{\hh}$ for all $f\in
G$. Hence, by (iii), $\hh$ reduces $N$, and thus $\hh$
reduces $E$ (see \cite[Theorem~6.6.3]{Bir-Sol87}).
Therefore, by \eqref{fdpe}, $\kk=\hh$ and $E=F$. Since
both sides of \eqref{ftxf} define representations of
$C(X)$ agreeing on the set $G$ generating $C(X)$, we
obtain $f(T)=\int_{X} f \D F$ for all $f\in C(X)$, so
$F$ is the spectral measure~of~$T$.

A careful inspection of the above arguments shows that
conditions (i)-(iii) are equivalent for separable
Hilbert spaces. Applying the ``moreover'' part of
Theorem~\ref{murzeqiv} completes the proof.
   \end{proof}
   \begin{proof}[Proof of Theorem~\ref{jydgunr}]
(i)$\Rightarrow$(ii) Let $\hh$, $T_n$ and $T$ be as in
(ii) and let
   \begin{align} \label{wlimw}
\wwlim_{n\to \infty} f(T_n) = f(T), \quad f\in G.
   \end{align}
For $n\in \natu$, we denote by $F_n\colon \borel(\cbb)
\to \ogr(\hh)$ the semispectral measure of $T_n$ and
by $E_{T}$ the spectral measure of $T$. By
\eqref{fysk} and \eqref{wlimw}, we have
 \begin{equation*}
\wwlim_{n\to \infty}\int_{X} f \D F_n = \int_{X} f \D
E_{T},\quad f\in G.
   \end{equation*}
By identifying $X$ topologically and algebraically
with $\mathfrak{M}_{C(X)}$, and each $a \in C(X)$ with
$\hat a$, and applying Proposition~\ref{atrug} and
Theorems~\ref{soptw} and~\ref{murzeqiv}(v), we obtain
   \begin{equation*}
\sslim_{n\to \infty}\int_{X} f \D F_n = \int_{X} f \D
E_{T},\quad f\in C(X),
   \end{equation*}
which shows that (ii) holds.

(ii)$\Rightarrow$(iii) is obvious.

(iii)$\Rightarrow$(i) By Lemma~\ref{prejyd}, it
suffices to show that the present condition (iii)
implies condition (iii) of that lemma. Let $\hh$ and
$\kk$ be Hilbert spaces such that $\hh\subseteq \kk$
and let $T\in \ogr(\hh)$ and $N\in \ogr(\kk)$ be
normal operators with spectrum in $X$ such that
   \begin{align}  \label{ftpfs}
f(T) = Pf(N)|_{\hh}, \quad f\in G,
   \end{align}
where $P\in \ogr(\kk)$ is the orthogonal projection of
$\kk$ onto $\hh$. First, note that we can assume
without loss of generality that $\dim \kk \ominus
\hh\Ge \aleph_0$ (otherwise, we can consider the
normal operator $N \oplus \eta I_{\mathcal{L}}$, where
$\eta\in X$ and $\mathcal{L}$ is a Hilbert space of
dimension $\aleph_0$). Let $S\in \ogr(\kk \ominus
\hh)$ be a completely non-unitary isometry (which by
the Wold-von Neumann decomposition theorem is
unitarily equivalent to a unilateral shift of an
appropriate multiplicity). Set $V=I_{\hh} \oplus S \in
\ogr(\kk)$ and fix $\lambda\in X$. Define two
sequences $\{M_n\}_{n=1}^{\infty},
\{T_n\}_{n=1}^{\infty}\subseteq \ogr(\kk)$ by
   \begin{align} \label{mdnvuk}
M_n=V^nNV^{*n} \text{ and } T_n = M_n + \lambda
(I-V^nV^{*n}) \text{ for } n\in \natu.
   \end{align}
Note that $\ran(V^n)$ reduces $V^n B V^{*n}$ for any
$B\in \ogr(\kk)$. Thus $\ran(V^n)$ reduces $M_n$.
Knowing that $(I-V^nV^{*n})$ is the orthogonal
projection of $\kk$ onto $\nul(V^{*n})$, we get
   \begin{align} \label{orsim}
T_n = M_n|_{\ran(V^n)} \oplus \lambda
I_{\nul(V^{*n})}, \quad n\in \natu.
   \end{align}
Since $V^n$ is an isometry, the map $\varPsi_n\colon
\ogr(\kk) \ni B \mapsto V^nBV^{*n}|_{\ran(V^n)}\in
\ogr(\ran(V^n))$ is a representation of the
$C^*$-algebra $\ogr(\kk)$. Therefore, for any $n\in
\natu$, $M_n|_{\ran(V^n)}=\varPsi_n(N)$ is normal,
consequently $T_n$ is normal, and
   \begin{align}  \label{mnryn}
\sigma(M_n|_{\ran(V^n)}) = \sigma(\varPsi_n(N))
\subseteq \sigma(N), \quad n \in \natu.
   \end{align}
By \eqref{orsim} and \eqref{mnryn},
$\sigma(T_n)\subseteq X$ for all $n\in \natu$, and
   \allowdisplaybreaks
   \begin{align} \notag
f(T_n) & = f(\varPsi_n(N)) \oplus f(\lambda)
I_{\nul(V^{*n})} = \varPsi_n(f(N)) \oplus f(\lambda)
I_{\nul(V^{*n})}
   \\ \label{fumv}
&= V^n f(N) V^{*n} + f(\lambda) (I-V^n V^{*n}), \quad
f \in C(X).
   \end{align}
Since $S$ is a completely non-unitary isometry,
$\sslim_{n\to\infty}V^{*n} = P$. This yields
   \begin{align}     \label{wwcxx}
\wwlim_{n\to\infty} f(T_n) & \overset{\eqref{fumv}}= P
f(N)P + f(\lambda) (I-P) = P f(N)|_{\hh} \oplus
f(\lambda) I_{\kk\ominus \hh}
   \end{align}
for all $f\in C(X)$. Hence, using \eqref{ftpfs} and
the fact that $\sigma(T \oplus \lambda I_{\kk\ominus
\hh})\subseteq X$, we have
   \begin{align} \label{wwcyz}
\wwlim_{n\to\infty} f(T_n) = f(T)\oplus f(\lambda)
I_{\kk\ominus \hh} = f(T \oplus \lambda I_{\kk\ominus
\hh}), \quad f\in G.
   \end{align}
It follows from \eqref{wwcxx} and (iii) that
   \begin{align*}
Pf(N)|_{\hh} \oplus f(\lambda) I_{\kk\ominus \hh}
=\sslim_{n\to\infty} f(T_n) = f(T \oplus \lambda
I_{\kk\ominus \hh}) = f(T) \oplus f(\lambda)
I_{\kk\ominus \hh}
   \end{align*}
for all $f\in C(X)$. Therefore, $Pf(N)|_{\hh}=f(T)$
for all $f\in C(X)$. Arguing as in \eqref{fptry} and
\eqref{psptr}), we conclude that $PN=NP$.

A careful analysis of the above arguments shows that
conditions (i)-(iii) remain equivalent when the
Hilbert spaces involved are separable. (In the proof
of (iii)$\Rightarrow$(i) one should note that if $\hh$
and $\kk$ are separable, then, with no loss of
generality, we may assume that $\dim \kk \ominus
\hh=\aleph_0$ and that $S$ is a unilateral shift of
multiplicity~$1$.) Together with the ``moreover'' part
of Theorem~\ref{murzeqiv}, this completes the proof.
   \end{proof}
   \section{\label{Sec.7n}Proofs of Theorems~\ref{specth},
\ref{sotwotth} and \ref{wszb}}
   \begin{proof}[Proof of Theorem~\ref{specth}]
(i)$\Rightarrow$(ii) Apply the Stone-von Neumann
calculus.

(ii)$\Rightarrow$(i) Let $X$ be a compact subset of
$\cbb$ containing $\supp{F} \cup \supp{E_{T}}$, where
$E_{T}$ is the spectral measure of $T$. Applying the
Stone-Weierstrass theorem and Theorem~\ref{dsaqw}(i)
to $\ascr=C(X)$ and $t=\xi$ (see \eqref{ksik}) shows
that $G=\{\bar\xi^{m}\xi^n \colon (m,n)\in \varXi\}$
generates $C(X)$ and is hyperrigid in $C(X)$. By (ii),
$f(T)=\int_{X} f \D F$ for all $f\in G$. Using
Lemma~\ref{prejyd}(ii) completes the proof.
   \end{proof}
   \begin{proof}[Proof of Theorem~\ref{sotwotth}]
By \eqref{add-2}, $f(T_k)=T_k^{*m}T_k^n$ for
$f=\bar\xi^{m}\xi^{n}$ (see \eqref{ksik}) with $m,n
\in \zbb_+$. To show \eqref{dycnim}, first apply
Theorem~\ref{dsaqw} to $\ascr=C(X)$ and $t=\xi$
together with the Stone-Weierstrass theorem, and then
use implication (i)$\Rightarrow$(ii) of
Theorem~\ref{jydgunr} with
$G=\{\bar{\xi}^{m}\xi^{n}\colon (m,n)\in \varXi\}$.
   \end{proof}
Recall that $T\in \ogr(\hh)$ is {\em normaloid} if
$\|T^n\|=\|T\|^n$ for all $n\in \zbb_+$, and that
every subnormal operator is normaloid (see
\cite[Theorem~1 in \S 2.6.2]{Fur01}).
   \begin{lemma} \label{wydnu}
Let $T, T_k\in \bou(\hh)$ $($$k\in \natu$$)$ be
normaloid operators such that
   \begin{align} \label{fdex}
\wwlim_{k\to\infty} T_k^{*r}T_k^r = T^{*r}T^r
   \end{align}
for some $r\in \natu$. Then $R:=\sup_{k\Ge 1} \|T_k\|<
\infty$ and $\sigma(T_k) \cup \sigma(T) \subseteq
\overline{\mathbb{D}}_R$ for every $k\in \natu$, where
$\overline{\mathbb{D}}_R:=\{z\in \cbb\colon |z|\Le
R\}$.
   \end{lemma}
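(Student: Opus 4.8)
The plan is to derive both conclusions from two standard facts: that a weakly convergent sequence of operators is automatically norm-bounded (uniform boundedness principle), and that for a normaloid operator the norm of $T^{*r}T^r$ already determines $\|T\|$ via the $C^*$-identity.

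First I would note that since $\{T_k^{*r}T_k^r\}_{k=1}^\infty$ converges in the weak operator topology, it is weakly bounded, and hence, by the uniform boundedness principle, $C:=\sup_{k\Ge 1}\|T_k^{*r}T_k^r\|<\infty$. Combining the $C^*$-identity $\|A\|^2=\|A^*A\|$ (applied with $A=T_k^r$) with the normaloid hypothesis $\|T_k^r\|=\|T_k\|^r$ then gives
\[
\|T_k\|^{2r}=\|T_k^r\|^2=\|T_k^{*r}T_k^r\|\Le C,\quad k\in\natu,
\]
so that $\|T_k\|\Le C^{1/(2r)}$ for every $k$ and therefore $R=\sup_{k\Ge 1}\|T_k\|\Le C^{1/(2r)}<\infty$; in fact $R^{2r}=C$.

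To bound $\|T\|$ I would invoke the weak lower semicontinuity of the operator norm: because $T^{*r}T^r$ is the weak limit of $T_k^{*r}T_k^r$, one has $\|T^{*r}T^r\|\Le\liminf_{k\to\infty}\|T_k^{*r}T_k^r\|\Le C=R^{2r}$. Running the same normaloid and $C^*$-identity computation for $T$ yields $\|T\|^{2r}=\|T^r\|^2=\|T^{*r}T^r\|\Le R^{2r}$, hence $\|T\|\Le R$. Finally, since the spectral radius never exceeds the norm, $\sigma(T_k)\subseteq\{z\in\cbb\colon|z|\Le\|T_k\|\}\subseteq\overline{\mathbb{D}}_R$ and, likewise, $\sigma(T)\subseteq\{z\in\cbb\colon|z|\Le\|T\|\}\subseteq\overline{\mathbb{D}}_R$, which finishes the argument.

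I do not expect a genuine obstacle here; the proof is a short chain of inequalities. The only point requiring a little care is the passage to the limit operator $T$: one cannot push the normaloid identity directly through the weak limit, so the estimate $\|T\|\Le R$ must instead be extracted from the weak lower semicontinuity of the norm, as above.
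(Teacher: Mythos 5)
Your proposal is correct and follows essentially the same route as the paper: uniform boundedness for the sequence, the $C^*$-identity combined with the normaloid property to convert bounds on $T_k^{*r}T_k^r$ into bounds on $\|T_k\|$, and then the spectral radius estimate. The only cosmetic difference is that you invoke weak lower semicontinuity of the operator norm as a black box to bound $\|T^{*r}T^r\|$, whereas the paper proves exactly that inequality by hand via $\|T^rh\|^2=\lim_{k\to\infty}\is{T_k^{*r}T_k^rh}{h}\Le R^{2r}\|h\|^2$.
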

   \begin{proof}
It follows from the uniform boundedness
principle and \eqref{fdex} that $\sup_{k\Ge
1} \|T_k\| = \big(\sup_{k\Ge 1}
\|T_k^r\|\big)^{1/r} < \infty$, so
$R<\infty$. Applying \eqref{fdex} again,
we~get
   \begin{align*}
\|T^rh\|^2 = \is{T^{*r}T^rh}{h} = \lim_{k\to\infty}
\is{T_k^{*r}T_k^rh}{h} \Le R^{2r} \|h\|^2, \quad h\in
\hh,
   \end{align*}
which shows that $\|T\|= \|T^r\|^{1/r} \Le
R$. Hence $\sigma(T_k) \cup \sigma(T)
\subseteq \overline{\mathbb{D}}_R$ for all
$k\in \natu$.
   \end{proof}
   \begin{proof}[Proof of Theorem~ \ref{wszb}]
By Lemma~\ref{wydnu} and Theorem~\ref{sotwotth} with
$X=\overline{\mathbb{D}}_R$, we obtain \eqref{dycnim}.
Substituting $f=\bar\xi^{m}\xi^{n}$ into
\eqref{dycnim} and using \eqref{add-2}, we
obtain~\eqref{woso}.
   \end{proof}
The example below shows that the normal operators
appearing in Theorem~\ref{specth} cannot be replaced
by more general ones; even isometries are not
admissible.
   \begin{ex} \label{isuszn}
Let $V\in \ogr(\hh)$ be a non-unitary isometry. Then,
by \cite[Proposition~I.2.3]{SF70}, there exists a
unitary operator $U\in \ogr(\kk)$ extending $V$,
so\footnote{Condition \eqref{nagy} is a particular
case of the Sz.-Nagy dilation theorem (see
\cite[Theorem~I.4.2]{SF70}).}
   \begin{equation}\label{nagy}
V^n=PU^n|_\hh, \qquad n\in \zbb_+,
   \end{equation}
where $P\in \ogr(\kk)$ is the orthogonal projection of
$\kk$ onto $\hh$. Let $E\colon \borel(\mathbb{T}) \to
\ogr(\kk)$ be the spectral measure of $U$, and let
$F\colon \borel(\tbb) \to \ogr(\hh)$ be the
semispectral measure defined by \eqref{fdpe} with
$X=\tbb$. It follows from \eqref{nagy} that
   \begin{equation*}
V^n=\int_{\tbb} z^n F(\D z) \;\; \text{and} \;\;
V^{*n}=\int_{\tbb} \bar z^n F(\D z) \;\; \text{for
all} \;\; n\in\zbb_+.
   \end{equation*}
This, together with $V^*V=I$, implies that $V^{*m}V^n=
\int_{\tbb} \bar{z}^m z^n F(\D z)$ for all $m,n \in
\zbb_+$. Observe that $F$ is not spectral, because
otherwise
   \begin{equation*}
V^{*}V= \int_{\tbb} \bar{z}z F(\D z)=\int_{\tbb} z
F(\D z)\int_{\tbb} \bar{z} F(\D z)= VV^{*},
   \end{equation*}
which contradicts the assumption that $V$ is not
unitary.
   \hfill $\diamondsuit$
   \end{ex}
   \section{\label{Sec.9}Counterexamples via Choquet boundary}
We begin by presenting examples that discuss the
optimality of the characterizations of hyperrigidity
given in Section~\ref{Sec.m} (see the comments
immediately following Theorem~\ref{dsaqw}). For the
definition of $\mathfrak{G}_k$, see \eqref{saqur}.
   \begin{prop} \label{isnytos}
Let $n\in \natu\setminus \{1\}$ and $d_1, \ldots, d_n
\in \natu$ be such that $d_i$ does not divide $d_j$
whenever $i\neq j$ and $\gcd(d_1, \ldots, d_n)=1$. Let
$(p_1,q_1), \ldots, (p_n,q_n) \in \zbb_+^2$ be
distinct pairs such that $d_j = |q_j-p_j|$ for $1 \Le
j \Le n$. Suppose that $X$ is a compact subset of
$\cbb$ such that for some $\beta_1, \ldots, \beta_n
\in (0,1)$ with $\beta_1 + \ldots + \beta_n=1$,
   \begin{align} \label{porea}
1 \in X \text{ and } \bigcup_{j=1}^n r_j
\mathfrak{G}_{d_j} \subseteq X \text{ with }
r_j=\beta_j^{-\frac{1}{p_j+q_j}}.
   \end{align}
Let $\mcal=\lin(\{1\} \cup \{\bar{\xi}^{p_j}
\xi^{q_j}\colon 1 \Le j \Le n\})$. Then
$C^*(\mcal)=C(X)$ and $1\in X \setminus
\partial_{\mcal} X$. In particular, the set
$G:= \{\bar{\xi}^{p_j} \xi^{q_j}\colon 1 \Le j \Le
n\}$ is not hyperrigid in $C(X)$.
   \end{prop}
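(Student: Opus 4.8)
The plan is to build a single regular Borel probability measure $\mu$ on $X$ that does double duty: it represents the point $1$ relative to $\mcal$ yet differs from $\delta_1$, and it converts into a scalar semispectral measure violating condition~(ii) of Lemma~\ref{prejyd}. I would first dispose of the generation claim. Since $d_j=|q_j-p_j|\Ge 1$, each pair $(p_j,q_j)$ is off-diagonal, and $\gcd\{p_j-q_j\colon 1\Le j\Le n\}=\gcd(d_1,\dots,d_n)=1$; hence Lemma~\ref{cxduns}(i), applied with $\ascr=C(X)$ and $t=\xi$ (see \eqref{ksik}), gives $C(X)=C^*(G)=C^*(\mcal)$. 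In particular $\mcal$ separates the points of $X$, and since $1\in\mcal$ the criterion recalled in Section~\ref{Sec.2} (see \cite[Proposition~6.2]{Phe02}) applies: $1\in\partial_\mcal X$ if and only if $\delta_1$ is the only Borel probability measure representing $1$ relative to $\mcal$. So it suffices to produce a representing measure $\mu\neq\delta_1$.

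To this end, let $\nu_j$ be the normalized counting measure on the finite set $r_j\mathfrak{G}_{d_j}\subseteq X$ and put $\mu=\sum_{j=1}^n\beta_j\nu_j$, a regular probability measure because $\sum_j\beta_j=1$. The decisive computation is of the moments of $\mu$ against the generators of $\mcal$. Writing $z=r_j\omega$ with $\omega^{d_j}=1$ gives $\bar z^{p_i}z^{q_i}=r_j^{p_i+q_i}\omega^{q_i-p_i}$, and averaging over $\omega$ via $\frac1{d_j}\sum_{\omega^{d_j}=1}\omega^{k}=1$ when $d_j\mid k$ and $0$ otherwise, together with $q_i-p_i=\pm d_i$, shows that $\int_X\bar z^{p_i}z^{q_i}\,\nu_j(\D z)$ equals $r_j^{p_i+q_i}$ if $d_j\mid d_i$ and vanishes otherwise. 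By hypothesis $d_j\nmid d_i$ for $i\neq j$, so only the diagonal term survives, and there $r_j^{p_j+q_j}=\beta_j^{-1}$ by the choice $r_j=\beta_j^{-1/(p_j+q_j)}$; consequently
\begin{align*}
\int_X\bar z^{p_i}z^{q_i}\,\mu(\D z)=\beta_i\,r_i^{p_i+q_i}=1=(\bar\xi^{p_i}\xi^{q_i})(1),\qquad 1\Le i\Le n,
\end{align*}
while $\int_X 1\,\mu(\D z)=1$. By linearity $\mu$ represents $1$ relative to $\mcal$. Moreover $\mu\neq\delta_1$: the divisibility constraints together with $n\Ge2$ prevent two of the $d_j$ from both equalling $1$, so some $d_j\Ge2$, and then $r_j e^{2\pi i/d_j}\in r_j\mathfrak{G}_{d_j}\setminus\{1\}$ carries mass $\beta_j/d_j>0$ under $\mu$. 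Thus $1\in X\setminus\partial_\mcal X$.

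Finally, to obtain non-hyperrigidity I would read $\mu$ as an operator measure on $\hh=\cbb$. Let $T$ be the scalar $1$, a normal operator with $\sigma(T)=\{1\}\subseteq X$, and define $F\colon\borel(X)\to\ogr(\cbb)$ by $F(\varDelta)=\mu(\varDelta)$; then $F$ is a semispectral measure with $F(X)=1$, and for every $f\in G$ one has $\int_X f\,\D F=\int_X f\,\D\mu=f(1)=f(T)$, so the hypothesis of \eqref{ftint} holds. However $F=\mu\neq\delta_1$ is not even projection valued, so it is not the spectral measure of $T$ and the conclusion of \eqref{ftint} fails. Hence condition~(ii) of Lemma~\ref{prejyd} is violated, and since $G$ generates $C(X)$, that lemma yields that $G$ is not hyperrigid in $C(X)$.

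I expect the moment computation to be the only genuine obstacle, and within it the vanishing of the off-diagonal terms: this is precisely where the assumption that no $d_i$ divides another $d_j$ is used, through the equivalence $d_j\mid(q_i-p_i)\iff d_j\mid d_i$. The remaining points—the probability normalization $\sum_j\beta_j=1$, the diagonal value $r_j^{p_j+q_j}=\beta_j^{-1}$, and the fact that $\mu$ charges a point other than $1$—are routine bookkeeping with the roots-of-unity measures.
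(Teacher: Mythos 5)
Your proposal is correct and follows essentially the same route as the paper: the same discrete measure $\mu=\sum_j\beta_j\nu_j$ supported on $\bigcup_{j=1}^n r_j\mathfrak{G}_{d_j}$, the same roots-of-unity moment computation (with the non-divisibility hypothesis killing exactly the off-diagonal terms), and the same appeal to Lemma~\ref{cxduns}(i) for $C^*(\mcal)=C(X)$. The only difference is in the last step: to pass from $1\in X\setminus\partial_{\mcal}X$ to non-hyperrigidity you exhibit the scalar counterexample $(\hh,T,F)=(\cbb,1,\mu)$ violating \eqref{ftint} in Lemma~\ref{prejyd}(ii), whereas the paper invokes \v{S}a\v{s}kin's theorem together with Theorem~\ref{murzeqiv}; both deductions are valid.
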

   \begin{proof}
Set $\alpha_j=\frac{\beta_j}{d_j}$ for $j=1,\ldots,n$.
Then $\{\alpha_j\}_{j=1}^n \subseteq (0,1)$ and
$\sum_{j=1}^n d_j \alpha_j=1$. For $j\in \{1, \ldots,
n\}$, let $\omega_j$ be a primitive $d_j$-root of
unity. Define the Borel probability measure $\mu$ on
$\cbb$ by $\mu = \sum_{j=1}^n \alpha_j
\sum_{k=0}^{d_j-1} \delta_{r_j\omega_j^k}$. According
to \eqref{porea}, $\supp(\mu)\subseteq X$. Then
   \begin{align*}
\int_{X} \bar z^m z^n \D \mu(z) = \sum_{j=1}^n
\alpha_j r_j^{m+n} \sum_{k=0}^{d_j-1}
\omega_j^{k(n-m)}, \quad m,n \in \zbb_+.
   \end{align*}
Let $\varepsilon_l = \pm 1$ be such that
$q_l-p_l=\varepsilon_l d_l$ for $l\in \{1,
\ldots,n\}$. Using \eqref{porea}, the assumption that
$d_i$ do not divide $d_j$ whenever $i\neq j$, and a
well-known property of sums of powers of roots of
unity, we obtain
   \begin{align*}
\int_{X} \bar z^{p_l} z^{q_l} \D \mu(z) = \sum_{j=1}^n
\alpha_j r_j^{p_l+q_l} \sum_{k=0}^{d_j-1}
\omega_j^{\varepsilon_l k d_l} = \sum_{j=1}^n \alpha_j
r_j^{p_l+q_l} d_j \delta_{j,l} = \alpha_l
r_l^{p_l+q_l} d_l =1
   \end{align*}
for all $l \in\{1, \ldots, n\}$. This implies that
$f(1)=\int_{X} f(z) \D \mu(z)$ for all $f\in \mcal$.
Since $\gcd(d_1, \ldots, d_n)=1$, we infer from
Lemma~\ref{cxduns}(i) that $C(X)=C^*(\mcal)$. By
Theorem~\ref{wtysa}, $\mcal$ separates the points of
$X$. Together with the fact that $1 \notin
\supp(\mu)$, this implies that $1\in X \setminus
\partial_{\mcal} X$ (see \cite[Proposition~6.2]{Phe02}).
It follows from \v{S}a\v{s}kin's theorem (see
\cite{Sas67}; see also \cite[Theorem, p.\ 48]{Phe02}),
Theorem~\ref{murzeqiv}(ii), and $X \neq
\partial_\mathcal{M}X$ that $\mcal$ is not hyperrigid in $C(X)$.
   \end{proof}
As shown in Theorem~\ref{main2} below, Choquet
boundary techniques, when combined with
Proposition~\ref{isnytos}, allow us to construct
sequences of normal operators whose behaviour is more
subtle than that obtained merely by negating
condition~(iii) in Theorem~\ref{jydgunr}. Before
proceeding, we need the following lemma.
   \begin{lemma} \label{wphnw}
If $T\in \ogr(\hh)$ and
$\{T_n\}_{n=1}^{\infty}\subseteq \ogr(\hh)$,
then
   \begin{align*}
\sslim_{n\to \infty} T_n = T \quad \iff
\quad \wwlim_{n\to \infty} T_n = T \;\; \&
\;\; \wwlim_{n\to \infty} T_n^*T_n = T^*T.
   \end{align*}
   \end{lemma}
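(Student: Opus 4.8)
The plan is to base everything on the single polarization identity
\begin{equation*}
\|T_nh-Th\|^2 = \|T_nh\|^2 - 2\re\is{T_nh}{Th} + \|Th\|^2, \quad h\in\hh,
\end{equation*}
together with the elementary observation that $\|T_nh\|^2=\is{T_n^*T_nh}{h}$. Both implications then reduce to tracking the three summands on the right-hand side as $n\to\infty$.

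For the implication ``$\Leftarrow$'', I would fix $h\in\hh$ and examine the identity term by term. The hypothesis $\wwlim_{n}T_n^*T_n=T^*T$, evaluated on the pair $(h,h)$, gives $\|T_nh\|^2=\is{T_n^*T_nh}{h}\to\is{T^*Th}{h}=\|Th\|^2$. The hypothesis $\wwlim_{n}T_n=T$, used with the fixed second vector $Th$, gives $\is{T_nh}{Th}\to\is{Th}{Th}=\|Th\|^2$, whence $\re\is{T_nh}{Th}\to\|Th\|^2$ as well. Substituting into the polarization identity yields $\|T_nh-Th\|^2\to\|Th\|^2-2\|Th\|^2+\|Th\|^2=0$, which is precisely $\sslim_{n}T_n=T$.

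For the converse ``$\Rightarrow$'', strong convergence trivially implies weak convergence, so $\wwlim_{n}T_n=T$ holds at once. To obtain the moment condition I would appeal to the joint continuity of the inner product along norm-convergent sequences: since $T_nh\to Th$ and $T_ng\to Tg$ in norm, one has
\begin{equation*}
\is{T_n^*T_nh}{g}=\is{T_nh}{T_ng}\longrightarrow\is{Th}{Tg}=\is{T^*Th}{g},\quad h,g\in\hh,
\end{equation*}
that is, $\wwlim_{n}T_n^*T_n=T^*T$.

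The argument is elementary throughout; the only point requiring a moment's care is the cross term in the forward direction, where passing from the separate convergence of $\{T_nh\}$ and $\{T_ng\}$ to convergence of their pairing is handled by the estimate $|\is{T_nh}{T_ng}-\is{Th}{Tg}|\Le\|T_nh-Th\|\,\|T_ng\|+\|Th\|\,\|T_ng-Tg\|$, together with the fact that a norm-convergent sequence such as $\{T_ng\}$ is bounded. In particular no invocation of the uniform boundedness principle is needed, and everything goes through verbatim whether or not $\hh$ is separable.
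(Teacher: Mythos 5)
Your proposal is correct and rests on exactly the same polarization identity $\|T_nh-Th\|^2=\is{T_n^*T_nh}{h}-2\re\is{T_nh}{Th}+\is{T^*Th}{h}$ that the paper cites as the whole proof; you have simply written out the term-by-term limits that the paper leaves implicit. Both directions are handled correctly, including the boundedness of the norm-convergent sequence $\{T_ng\}$ needed for the cross term.
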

   \begin{proof} The proof is a direct
consequence of the following identity
   \begin{align} \tag*{\qedhere}
\|T_n h- Th\|^2 = \is{T_n^*T_n h}{h} - 2
\mathrm{Re} \is{T_nh}{Th} + \is{T^*Th}{h},
\quad h\in \hh.
   \end{align}
   \end{proof}
   \begin{theorem} \label{main2}
Let $X$ be a nonempty compact subset of $\cbb$, and
let $\mathcal{M}$ be a subspace of $C(X)$ such that
$1\in \mathcal{M}$, $C^*(\mcal)=C(X)$, and $X \neq
\partial_\mathcal{M}X$. Then for every
infinite-dimensional Hilbert space $\hh$, there exist
normal operators $T\in \bou(\hh)$ and
$\{T_n\}_{n=1}^{\infty}\subseteq \bou(\hh)$, all with
spectrum in $X$, such that
   \begin{enumerate}
   \item[(i)] $\{T_n\}_{n=1}^{\infty}$ does not
converge to $T$ in the strong operator topology,
   \item[(ii)] $\{f(T_n)\}_{n=1}^{\infty}$
converges in the weak operator topology for all~$f\in
C(X)$,
   \item[(iii)] $\wwlim_{n\to\infty} f(T_n) = f(T)$
for all $f\in \mcal$,
   \item[(iv)]
for every finite or countably infinite set $G$ of
generators of $C(X)$ which is hyperrigid in $C(X)$,
there exists $f_0\in G \setminus \mcal$ such that
   \begin{align*}
\wwlim_{n\to\infty} f_0(T_n) \neq f_0(T).
   \end{align*}
   \end{enumerate}
   \end{theorem}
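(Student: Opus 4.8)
The plan is to convert the failure of the Choquet boundary into a representing measure and then reuse the dilation construction from the proof of Theorem~\ref{jydgunr}. Since $1\in\mcal$ and $C^*(\mcal)=C(X)$, the Stone--Weierstrass theorem (Theorem~\ref{wtysa}) shows that $\mcal$ separates the points of $X$, so \cite[Proposition~6.2]{Phe02} applies. Because $X\neq\partial_{\mcal}X$, I would fix a point $\lambda\in X\setminus\partial_{\mcal}X$ together with a regular Borel probability measure $\mu\neq\delta_\lambda$ that represents $\lambda$ relative to $\mcal$, i.e. $\int_X f\,\D\mu=f(\lambda)$ for every $f\in\mcal$. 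First I would record that $\mu$ is not a point mass: if $\mu=\delta_{z_0}$, then $f(z_0)=f(\lambda)$ for all $f\in\mcal$, and since $\mcal$ separates points this forces $z_0=\lambda$, contradicting $\mu\neq\delta_\lambda$.

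Next I would build the operators. Let $N_\mu$ denote multiplication by the coordinate function on $L^2(\mu)$, a normal operator with $\sigma(N_\mu)=\supp(\mu)\subseteq X$, and let $\mathbf 1\in L^2(\mu)$ be the (unit) constant function. On $\kk_0:=L^2(\mu)\oplus\mathcal L$, with $\mathcal L$ an infinite-dimensional Hilbert space of dimension $\dim\hh$, set $N:=N_\mu\oplus\lambda I_{\mathcal L}$ and $\hh_0:=\comp\mathbf 1$. Then $N$ is normal, $\sigma(N)\subseteq X$, and $\is{f(N)\mathbf 1}{\mathbf 1}=\int_X f\,\D\mu=f(\lambda)$ for $f\in\mcal$, while $\hh_0$ fails to reduce $N$ precisely because $\mu$ is not a point mass. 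Reusing the construction from \eqref{mdnvuk} onward, I would take a completely non-unitary isometry $S$ on the infinite-dimensional space $\kk_0\ominus\hh_0$, put $V=I_{\hh_0}\oplus S$, and define $T_n=V^nNV^{*n}+\lambda(I-V^nV^{*n})$ together with $T=\lambda I_{\kk_0}$. As in that proof, each $T_n$ is normal with $\sigma(T_n)\subseteq X$, and $\sslim_n V^{*n}=P_0$, the projection onto $\hh_0$, yields the weak-limit formula
\[
\wwlim_{n\to\infty} f(T_n)=P_0 f(N)|_{\hh_0}\oplus f(\lambda) I_{\kk_0\ominus\hh_0},\qquad f\in C(X).
\]
Finally I would transport everything to the given $\hh$ through a unitary $\kk_0\cong\hh$, which is legitimate since $\dim\kk_0=\dim\hh$.

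Conditions (ii) and (iii) then follow at once from the displayed formula: its right-hand side exists for every $f\in C(X)$, which is (ii); and for $f\in\mcal$ the $\hh_0$-block equals $\is{f(N)\mathbf 1}{\mathbf 1}I_{\hh_0}=f(\lambda)I_{\hh_0}$, so the limit collapses to $f(\lambda)I_{\kk_0}=f(T)$, which is (iii). For (i) I would apply Lemma~\ref{wphnw}: a strong limit $T_n\to T$ would require both $\wwlim_n T_n=T$ and $\wwlim_n T_n^*T_n=T^*T$. Evaluating the weak-limit formula at $f=\xi$ and at $f=\bar\xi\xi$ (see \eqref{ksik}) and reading off the $\hh_0$-block, these two identities become $\int_X z\,\D\mu(z)=\lambda$ and $\int_X |z|^2\,\D\mu(z)=|\lambda|^2$; together they give $\int_X|z|^2\,\D\mu=\big|\int_X z\,\D\mu\big|^2$, i.e. equality in the Cauchy--Schwarz inequality, which makes $\mu$ a point mass---a contradiction. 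Hence $T_n$ does not converge strongly to $T$.

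For (iv), let $G$ be a finite or countably infinite hyperrigid set of generators of $C(X)$, and suppose, contrary to the assertion, that $\wwlim_n f(T_n)=f(T)$ for all $f\in G$. Since $T$ and the $T_n$ are normal with spectrum in $X$ and $G$ is hyperrigid, condition (iii) of Theorem~\ref{jydgunr} applies and yields $\sslim_n f(T_n)=f(T)$ for every $f\in C(X)$, in particular $\sslim_n T_n=T$, contradicting~(i). Thus some $f_0\in G$ satisfies $\wwlim_n f_0(T_n)\neq f_0(T)$, and $f_0\notin\mcal$ by~(iii). I expect the main difficulty to be conceptual rather than computational: one must recognise the representing measure $\mu\neq\delta_\lambda$ as the correct encoding of the boundary defect, and realise that the choice $T=\lambda I$ with $\mathbf 1$ as the distinguished vector renders all of $\mcal$ invisible to the weak limit while the genuine coordinate data $\xi,\bar\xi\xi$ still detect the strictly positive variance of $\mu$. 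The remaining work---normality, spectral containment, the weak-limit formula, and the cardinality bookkeeping $\dim\kk_0=\dim\hh$ with $\kk_0\ominus\hh_0$ infinite-dimensional---is already packaged in the proof of Theorem~\ref{jydgunr} and in Lemma~\ref{wphnw}.
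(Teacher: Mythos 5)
Your proposal is correct, and its skeleton coincides with the paper's: a non-boundary point $\lambda$ with a representing measure $\mu\neq\delta_\lambda$, a normal dilation $N$ of the compression data, the sequence $T_n=V^nNV^{*n}+\lambda(I-V^nV^{*n})$ borrowed from the proof of Theorem~\ref{jydgunr}, the resulting weak-limit formula, and a final unitary transport to $\hh$. Two of your choices genuinely diverge from the paper and are worth recording. First, you realize the dilation concretely as multiplication by $\xi$ on $L^2(\mu)$ compressed to the one-dimensional space $\cbb\mathbf 1$, whereas the paper takes $T=\lambda I_\hh$, $F=\mu(\cdot)I_\hh$ on all of $\hh$ and invokes Naimark's theorem; your route makes the cardinality bookkeeping trivial ($\dim L^2(\mu)\Le\aleph_0$ since $X$ is compact metrizable) and dispenses with the paper's Carath\'eodory argument showing $\dim\hh=\dim\kk$. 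Second, you reverse the logical order of (i) and (iv): you prove (i) directly by reading off the $\hh_0$-blocks at $f=\xi$ and $f=\bar\xi\xi$ and forcing equality in the Cauchy--Schwarz inequality $\bigl|\int_X z\,\D\mu\bigr|^2\Le\int_X|z|^2\,\D\mu$, which would make $\mu$ a point mass; then (iv) falls out of (i) via condition (iii) of Theorem~\ref{jydgunr}. The paper instead proves the (iv)-type statement first, using Lemma~\ref{prejyd}(i)$\Rightarrow$(iii) together with $NP\neq PN$, and only then deduces (i) by specializing to the generating set $\{\xi,\bar\xi\xi\}$, whose hyperrigidity it must import from Theorem~\ref{dsaqw}(i). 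Your version of (i) is more elementary and self-contained; the paper's version of (iv) has the mild advantage of exhibiting the offending $f_0$ through the explicit inequality $f_0(T)\neq Pf_0(N)|_{\hh}$ rather than by contradiction. Both arguments are complete; note only that your deduction of (iv) tacitly uses (ii) to know that $\wwlim_{n\to\infty}f_0(T_n)$ exists (so that failure of the implication means the limit exists but differs from $f_0(T)$), which is exactly what the statement of (iv) asserts.
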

   \begin{proof}
Since $C^*(\mcal) = C(X)$, we infer from
Theorem~\ref{wtysa} that $\mcal$ separates the points
of $X$. Fix $\lambda_0 \in X
\setminus\partial_\mathcal{M}X$. As $1\in \mcal$, it
follows (cf.\ \cite[Proposition~6.2]{Phe02}) that
there exists a Borel probability measure $\mu$ on $X$
such that $\mu\neq \delta_{\lambda_0}$ and
   \begin{equation} \label{eqonm}
f(\lambda_0) = \int_X f\D \mu,\quad f\in
\mcal.
   \end{equation}

Let $T=\lambda_0 I_{\hh}$ and $F\colon \borel(X) \to
\ogr(\hh)$ be defined by
$F(\varDelta)=\mu(\varDelta)I_\hh$ for $\varDelta \in
\borel(X)$. Then $F$ is a semispectral measure. By
\eqref{eqonm}, we have $f(T) = \int_X f\D F$ for all
$f\in \mcal$. By Naimark's dilation theorem, there
exists a Hilbert space $\kk$ containing $\hh$ and a
spectral measure $E\colon \borel(X) \to \ogr(\kk)$
satisfying \eqref{fdpe}. Then the operator $N:=\int_X
z E(\D z)$ is a normal operator with $\sigma(N)
\subseteq X$ satisfying
   \begin{align} \label{dil}
f(T) = P f(N)|_{\hh}, \quad f\in \mcal.
   \end{align}
Note that $NP\neq PN$. Indeed, otherwise $\hh$ reduces
$E$, so by \eqref{fdpe}, $F$ is a spectral measure.
This implies that $\mu=\delta_{\lambda_1}$ for some
$\lambda_1 \in X$. Since $\mcal$ separates the points
of $X$, \eqref{eqonm} yields $\mu
=\delta_{\lambda_0}$, a contradiction.

We now show that $\dim \hh = \dim \kk$. Indeed, since
the $\sigma$-algebra $\borel(X)$ is generated by a
countable algebra $\uscr$, one can deduce from the
Carath\'{e}odory extension theorem (see, e.g.,
\cite[Appendix]{Sto87}) that
   \begin{align*}
\kk = \bigvee \Big\{E(\varDelta)\hh\colon
\varDelta \in \uscr\Big\} = \bigvee
\Big\{E(\varDelta)e_{\iota}\colon \varDelta
\in \uscr, \iota \in J\Big\},
   \end{align*}
where $\{e_{\iota}\}_{\iota\in J}$ is an
orthonormal basis of $\hh$. Therefore, since
$\dim \hh \Ge \aleph_0$, we get
   \begin{align*}
\dim \hh \Le \dim \kk \Le \card \uscr \cdot
\card J \Le \aleph_0 \cdot \dim \hh = \dim
\hh,
   \end{align*}
so $\dim \hh = \dim \kk$.

We now follow the proof of (iii)$\Rightarrow$(i) in
Theorem~\ref{jydgunr} with $\mcal$ in place of $G$. As
in that proof, we may assume without loss of
generality that $\dim \kk \ominus \hh\Ge \aleph_0$,
while still preserving \eqref{dil}, $\dim \hh = \dim
\kk$, and $NP\neq PN$ (however, the second identity in
\eqref{fdpe} may fail). Fix $\lambda\in X$ and define
the sequence $\{T_n\}_{n=1}^{\infty} \subseteq
\ogr(\kk)$ of normal operators with spectrum in $X$ as
in \eqref{mdnvuk}. Clearly, the operator $\tilde T:=T
\oplus \lambda I_{\kk\ominus \hh}$ is normal and
satisfies $\sigma(\tilde T) \subseteq X$. By
\eqref{wwcxx} and \eqref{wwcyz}, we have
   \allowdisplaybreaks
   \begin{gather} \label{qgdwq}
\textit{$\{f(T_n)\}_{n=1}^{\infty}$ converges in the
weak operator topology for every $f\in C(X)$,}
   \\ \label{gdwq}
\wwlim_{n\to\infty} f(T_n) =
f(\tilde T), \quad f\in \mcal.
   \end{gather}
Let $G$ be any finite or countably infinite set of
generators of $C(X)$ which is hyperrigid in $C(X)$.
Since $NP\neq PN$, it follows from implication
(i)$\Rightarrow$(iii) of Lemma~\ref{prejyd} that there
exists $f_0\in G$ such that $f_0(T) \neq P
f_0(N)|_{\hh}$. This yields
   \begin{align} \label{nytqet}
\wwlim_{n\to\infty} f_0(T_n) \overset{\eqref{wwcxx}}=
P f_0(N)|_{\hh} \oplus f_0(\lambda) I_{\kk\ominus \hh}
\neq f_0(\tilde T).
   \end{align}

Summarizing, the normal operators $\tilde T, T_n\in
\ogr(\kk)$ ($n\in \natu$) have spectra in $X$ and
satisfy \eqref{qgdwq}-\eqref{nytqet}. In particular,
this implies that $f_0\in G \setminus \mcal$. We
claim~that
   \begin{align} \label{nocerw}
\textit{$\{T_n\}_{n=1}^{\infty}$ does not converge to
$\tilde T$ in the strong operator topology.}
   \end{align}
Indeed, applying Theorem~\ref{dsaqw}(i) to
$\varXi=\{(0,1), (1,1)\}$, we deduce that
$G=\{\xi,\bar \xi \xi\}$ generates $C(X)$ and is
hyperrigid in $C(X)$. Consequently, the corresponding
vector $f_0$ belongs to $\{\xi,\bar \xi \xi\}$. This,
together with \eqref{nytqet}, implies that
$\wwlim_{n\to\infty} T_n \neq \tilde T$ or
$\wwlim_{n\to\infty} T_n^*T_n \neq \tilde T^* \tilde
T$. Thus, by Lemma~\ref{wphnw}, \eqref{nocerw} holds.
Since \(\dim \hh=\dim \kk\), the Hilbert spaces
\(\hh\) and \(\kk\) are unitarily equivalent. Passing
to the unitary equivalents of the constructed
operators \(\{T_n\}_{n=1}^{\infty}\) and \(\tilde T\)
therefore completes~the~proof.
  \end{proof}
\noindent {\bf Acknowledgements} We thank the reviewer
for the thorough review and suggestions, which led to
a significant improvement in the structure and clarity
of the paper.
   \appendix
   \section{UCP maps on commutative unital  $C^*$-algebras}
We begin by stating a complex version of the
Stone-Weierstrass theorem, which is not easily found
in the literature (see \cite[Theorem~44.7]{Will70} for
the real case):
   \begin{theorem}[Stone-Weierstrass theorem]
\label{wtysa}
If $X$ is a compact Hausdorff space and $G$
is a nonempty subset of $C(X)$, then
$C^*(G)=C(X)$ if and only if $G$ separates
the points of $X$.
   \end{theorem}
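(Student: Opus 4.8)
The plan is to reduce the complex statement to the classical real Stone--Weierstrass theorem, available as \cite[Theorem~44.7]{Will70}, and to exploit that the involution on $C(X)$ is complex conjugation, so that $A:=C^*(G)$ is automatically a norm-closed, unital subalgebra of $C(X)$ that is stable under $f\mapsto \bar f$.

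For the forward implication I would argue by contraposition. Suppose $G$ fails to separate the points of $X$, so there are distinct $x,y\in X$ with $g(x)=g(y)$ for every $g\in G$. The set $\mathcal{I}:=\{h\in C(X)\colon h(x)=h(y)\}$ is a norm-closed, conjugation-stable, unital subalgebra of $C(X)$ containing $G$; hence it is a unital $C^*$-subalgebra, and by minimality of $C^*(G)$ we get $A\subseteq \mathcal{I}$. Since $X$ is compact Hausdorff it is completely regular, so Urysohn's lemma shows that $C(X)$ itself separates points; in particular there is $h_0\in C(X)$ with $h_0(x)\neq h_0(y)$, giving $h_0\notin \mathcal{I}$ and thus $A\subsetneq C(X)$. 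This proves that $C^*(G)=C(X)$ forces $G$ to separate points.

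For the backward implication, assume $G$ separates points and set $A:=C^*(G)$. The key step is to pass to the real-valued part $A_{\rbb}:=\{f\in A\colon f=\bar f\}$. Because $A$ is stable under conjugation, for every $f\in A$ both $\re f=\tfrac{1}{2}(f+\bar f)$ and $\im f=\tfrac{1}{2i}(f-\bar f)$ lie in $A_{\rbb}$; consequently $A_{\rbb}$ is a norm-closed real subalgebra of $C(X,\rbb)$ containing the real constants, and it separates points of $X$ (if $g\in G$ distinguishes $x$ from $y$, then $\re g$ or $\im g$ does so as well). The real Stone--Weierstrass theorem then forces $A_{\rbb}=C(X,\rbb)$. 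Finally, every $f\in C(X)$ decomposes as $f=\re f + i\,\im f$ with $\re f,\im f\in C(X,\rbb)=A_{\rbb}\subseteq A$, whence $f\in A$ and $A=C(X)$.

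I expect the substantive point to be this reduction to the real case: the complex Stone--Weierstrass theorem cannot be obtained directly, and the argument genuinely requires the conjugation-stability of $C^*(G)$ (the disk algebra separates points yet is not dense, so self-adjointness is indispensable). Once one records that $A_{\rbb}$ is closed, unital, and point-separating, the cited real theorem does the heavy lifting, and the remainder is the routine real/imaginary-part bookkeeping.
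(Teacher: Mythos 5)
Your proof is correct and follows essentially the same route as the paper: the ``only if'' direction is the identical contrapositive argument (non-separated points force $C^*(G)$ into the proper closed $*$-subalgebra $\{h\colon h(x)=h(y)\}$, while Urysohn's lemma, valid since compact Hausdorff spaces are normal, produces a function outside it). For the ``if'' direction the paper simply cites \cite[Theorem~2.40]{Doug72}; your real-part reduction to \cite[Theorem~44.7]{Will70} is the standard proof of that cited result and is carried out correctly, including the key use of conjugation-stability of $C^*(G)$.
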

   \begin{proof}
For the ``if'' part, see \cite[Theorem~2.40]{Doug72}.
If the ``only if'' part were false, there would exist
distinct points $x_1,x_2 \in X$ such that
$f(x_1)=f(x_2)$ for all $f \in G$. Since point
evaluations are characters, this would imply
$f(x_1)=f(x_2)$ for all $f \in C^*(G)$. By the
normality of $X$, Urysohn's lemma would give $f_0 \in
C(X)$ with $f_0(x_1)\neq f_0(x_2)$, so that $f_0
\notin C^*(G)$, a contradiction.
   \end{proof}
The following observation plays a crucial role in this
paper.
   \begin{prop}\label{atrug}
Let $\ascr$ be a separable commutative unital
$C^*$-algebra. Then $C(\mathfrak{M}_{\ascr})$ is
separable $($relative to the supremum norm$)$,
$\mathfrak{M}_{\ascr}$ is a separable metrizable and
compact Hausdorff space, and any Borel complex measure
on $\mathfrak{M}_{\ascr}$ is regular.
   \end{prop}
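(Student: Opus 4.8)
The plan is to reduce everything to the Gelfand--Naimark identification of $\ascr$ with a function algebra and then to exploit the classical equivalence between separability of $C(K)$ and metrizability of a compact Hausdorff space $K$. Since $\ascr$ is a commutative unital $C^*$-algebra, the Gelfand transform $\Gamma\colon \ascr \to C(\mathfrak{M}_{\ascr})$, $a\mapsto \widehat a$, is an isometric $*$-isomorphism onto $C(\mathfrak{M}_{\ascr})$, and $\mathfrak{M}_{\ascr}$ is a compact Hausdorff space. Because $\Gamma$ is an isometry and $\ascr$ is separable, the image $C(\mathfrak{M}_{\ascr})$ is separable, which gives the first assertion at once.

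Next I would prove metrizability. Fix a countable set $\{f_n\}_{n\in\natu}$ dense in $C(\mathfrak{M}_{\ascr})$. Since $\mathfrak{M}_{\ascr}$ is compact Hausdorff, $C(\mathfrak{M}_{\ascr})$ separates its points (Urysohn's lemma), and density then forces $\{f_n\}_{n\in\natu}$ to separate points as well: if $f_n(x)=f_n(y)$ for all $n$ while $x\neq y$, choosing $g\in C(\mathfrak{M}_{\ascr})$ with $g(x)\neq g(y)$ and approximating it uniformly by the $f_n$ yields a contradiction. Define
\begin{align*}
d(x,y)=\sum_{n=1}^\infty 2^{-n}\,\frac{|f_n(x)-f_n(y)|}{1+|f_n(x)-f_n(y)|}, \quad x,y\in \mathfrak{M}_{\ascr}.
\end{align*}
Point separation makes $d$ a genuine metric, and the series converges uniformly, so $d$ is continuous on $\mathfrak{M}_{\ascr}\times\mathfrak{M}_{\ascr}$; hence the identity map from $\mathfrak{M}_{\ascr}$ with its original topology onto $(\mathfrak{M}_{\ascr},d)$ is continuous. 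As it is a continuous bijection from a compact space onto a Hausdorff space, it is a homeomorphism, so $\mathfrak{M}_{\ascr}$ is metrizable. A compact metric space is second countable and therefore separable, which yields the second assertion.

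Finally I would address regularity. A compact metric space is complete and separable, so it suffices to recall that every finite positive Borel measure $\mu$ on a metric space is regular. This follows from the standard ``good sets'' argument: the family of Borel sets $B$ for which, given $\varepsilon>0$, there are a closed set $F$ and an open set $U$ with $F\subseteq B\subseteq U$ and $\mu(U\setminus F)<\varepsilon$ is a $\sigma$-algebra, and it contains every closed set $F$ (approximate $F$ from outside by the open neighbourhoods $\{x\colon \mathrm{dist}(x,F)<1/k\}$ and use continuity from above of the finite measure $\mu$), hence it equals $\borel(\mathfrak{M}_{\ascr})$. An arbitrary complex Borel measure splits into four finite positive measures through its real and imaginary parts and their Jordan decompositions, so regularity passes to it, giving the last assertion.

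The main obstacle will be the metrizability step, and within it the verification that $d$ recovers the original topology rather than a strictly coarser one; this is exactly where compactness of $\mathfrak{M}_{\ascr}$ is essential, since a continuous bijection onto a Hausdorff space need not be a homeomorphism without it. Once metrizability is secured, both separability and the regularity of all complex Borel measures are routine consequences.
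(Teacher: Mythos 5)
Your proof is correct and follows essentially the same route as the paper: Gelfand--Naimark gives separability of $C(\mathfrak{M}_{\ascr})$, metrizability of $\mathfrak{M}_{\ascr}$ follows, and regularity of complex Borel measures is reduced via the Jordan decomposition to regularity of finite positive Borel measures. The only difference is that you prove the two nontrivial steps from scratch (metrizability via the standard metric built from a countable dense separating family, and regularity via the good-sets argument on a compact metric space), whereas the paper cites Groenewegen--van Rooij for metrizability and Rudin's theorem on spaces whose open sets are $\sigma$-compact for regularity.
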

   \begin{proof}
Since, by the Gelfand-Naimark theorem, the Gelfand
transform $\ascr\ni a\mapsto \widehat{a} \in
C(\mathfrak{M}_{\ascr})$ is an isometric $*$-algebra
isomorphism, we see that $C(\mathfrak{M}_{\ascr})$ is
separable. By \cite[Theorem~2.7]{Gr-Ro16},
$\mathfrak{M}_{\ascr}$ is a metrizable compact
Hausdorff space. As a consequence, any nonempty open
subset of $\mathfrak{M}_{\ascr}$ is $\sigma$-compact.
Hence, by \cite[Theorem~2.18]{Rud87}, finite positive
Borel measures on $\mathfrak{M}_{\ascr}$ are
automatically regular. Using the Jordan decomposition
of real measures (see \cite[Section~6.6]{Rud87}), we
conclude that Borel complex measures on
$\mathfrak{M}_{\ascr}$ are regular. The metric space
$\mathfrak{M}_{\ascr}$, being compact is automatically
separable (see \cite[Theorem~ IV.\S41.VI.1]{Kur68}).
   \end{proof}
Now we state a Bochner-type theorem for UCP maps on
commutative unital $C^*$-algebras, which follows from
\cite[Proposition~4.5]{Paul02} via the
Gelfand--Naimark theorem. A semispectral measure
$F\colon \borel(X) \to \ogr(\hh)$ on a Hausdorff space
$X$ is called {\em regular} if the measure $\langle
F(\cdot)f,f\rangle$ is regular for all $f \in \hh$.
   \begin{theorem} \label{soptw}
Let $\ascr$ be a commutative unital $C^*$-algebra,
$\hh$ be a Hilbert space and $\pi,\varPhi\colon \ascr
\to \ogr(\hh)$ be arbitrary maps. Then the following
statements are valid{\em :}
   \begin{enumerate}
   \item[(i)] $\pi$ is a
representation of $\ascr$ if and only if
there exists a $($unique$)$ regular spectral
measure $E\colon
\borel(\mathfrak{M}_{\ascr}) \to \ogr(\hh)$
such that
   \begin{align} \label{pyia}
\pi(a) =\int_{\mathfrak{M}_{\ascr}}
\widehat{a} \, \D E,\quad a \in \ascr,
   \end{align}
   \item[(ii)] $\varPhi$ is a
UCP map if and only if there exists a
$($unique$)$ regular semispectral measure
$F\colon \borel(\mathfrak{M}_{\ascr}) \to
\ogr(\hh)$ such that
   \begin{align*}
\varPhi(a) =\int_{\mathfrak{M}_{\ascr}}
\widehat{a} \, \D F,\quad a \in \ascr.
   \end{align*}
   \end{enumerate}
Moreover, if $E\colon \borel(\mathfrak{M}_{\ascr}) \to
\ogr(\hh)$ is a regular semispectral measure
satisfying {\em \eqref{pyia}}, where $\pi$ is a
representation of $\ascr$, then $E$ is spectral.
   \end{theorem}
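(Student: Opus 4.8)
The plan is to pass, via the Gelfand--Naimark theorem, from the abstract algebra $\ascr$ to the concrete algebra $C(X)$ with $X:=\mathfrak{M}_{\ascr}$, and then invoke \cite[Proposition~4.5]{Paul02}. The Gelfand transform $\Gamma\colon \ascr\ni a\mapsto \widehat a\in C(X)$ is an isometric unital $*$-isomorphism onto $C(X)$, so $\pi$ is a representation of $\ascr$ if and only if $\pi\circ\Gamma^{-1}$ is a representation of $C(X)$, and $\varPhi$ is a UCP map on $\ascr$ if and only if $\varPhi\circ\Gamma^{-1}$ is one on $C(X)$. Moreover, a measure $E$ satisfies $\pi(a)=\int_X\widehat a\,\D E$ for all $a$ precisely when $(\pi\circ\Gamma^{-1})(f)=\int_X f\,\D E$ for all $f\in C(X)$, with the same measure. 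Hence it suffices to prove the three assertions for $\ascr=C(X)$, with $\widehat a$ replaced by $f\in C(X)$.

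I would establish (ii) first. For the forward implication I would apply \cite[Proposition~4.5]{Paul02} to the UCP map $\varPhi\colon C(X)\to\ogr(\hh)$ to obtain a semispectral measure $F$ on $\borel(X)$ with $\varPhi(f)=\int_X f\,\D F$. For the converse, given such an $F$ I would take a Naimark dilation $E'\colon\borel(X)\to\ogr(\kk)$ with $\hh\subseteq\kk$ and $F(\varDelta)=PE'(\varDelta)|_\hh$, where $P$ is the projection of $\kk$ onto $\hh$ (see \cite[Theorem~6.4]{Ml78}); then $f\mapsto\int_X f\,\D E'$ is a representation of $C(X)$ and $\varPhi$ is its compression, hence UCP, while $\varPhi(1)=F(X)=I$ makes it unital. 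Regularity and uniqueness I would extract from the Riesz--Markov theorem: for each $h\in\hh$ the positive functional $f\mapsto\is{\varPhi(f)h}{h}$ is represented by a unique regular Borel measure on the compact Hausdorff space $X$, which must coincide with $\is{F(\cdot)h}{h}$, and polarization then determines $F$ uniquely.

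For (i) and the ``moreover'' part I would argue as follows. The ``if'' direction of (i) is the multiplicativity of the spectral integral: for a spectral measure $E$ one has $\int_X fg\,\D E=(\int_X f\,\D E)(\int_X g\,\D E)$, $\int_X\bar f\,\D E=(\int_X f\,\D E)^*$ and $\int_X 1\,\D E=I$, so $\pi(a)=\int_X f\,\D E$ defines a representation. Since a representation is in particular a UCP map, the ``only if'' direction of (i) reduces, via (ii), to the ``moreover'' statement. To prove the latter, let $\pi$ be a representation and $E$ a regular semispectral measure with $\pi(a)=\int_X f\,\D E$. Multiplicativity of $\pi$ gives $\int_X fg\,\D E=(\int_X f\,\D E)(\int_X g\,\D E)$ for all $f,g\in C(X)$. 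Passing to a Naimark dilation $E'$ on $\kk\supseteq\hh$ and writing $M_f=\int_X f\,\D E'$, this identity becomes $PM_f(I-P)M_g|_\hh=0$ for all $f,g$; taking $g=\bar f$ and using $\is{PM_f(I-P)M_{\bar f}h}{h}=\|(I-P)M_{\bar f}h\|^2$ forces $(I-P)M_{\bar f}h=0$ for all $h\in\hh$ and $f\in C(X)$. Thus $\hh$ is invariant under every $M_f$, hence reduces $E'$, so $E(\varDelta)=E'(\varDelta)|_\hh$ is an orthogonal projection and $E$ is spectral.

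The main obstacle is this last upgrade from a semispectral to a spectral measure: it is the only place where the multiplicativity of $\pi$ (rather than mere positivity) is exploited, and the step that is genuinely operator-theoretic rather than a citation. Everything else is either a direct appeal to \cite[Proposition~4.5]{Paul02} or a routine consequence of the spectral integral and the Riesz--Markov representation. One must, however, track regularity throughout, since $X=\mathfrak{M}_{\ascr}$ need not be metrizable when $\ascr$ is inseparable, so the automatic regularity available in the separable case (cf.\ Proposition~\ref{atrug}) cannot be used here.
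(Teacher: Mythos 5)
Your proposal is correct and follows exactly the route the paper itself indicates: the paper gives no detailed proof, merely asserting that the theorem ``follows from \cite[Proposition~4.5]{Paul02} via the Gelfand--Naimark theorem,'' and your argument is a faithful, complete elaboration of that reduction. The one step requiring genuine work --- upgrading the regular semispectral measure representing a multiplicative $\pi$ to a spectral one via a Naimark dilation and the identity $PM_f(I-P)M_{\bar f}|_{\hh}=0$ --- is handled correctly.
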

The spectral measure $E$ in Theorem~\ref{soptw}(i) is
called the {\em spectral measure} of $\pi$.
   \section{\label{App.B}Hyperrigidity revisited}
If $G$ is a nonempty subset of a unital $C^*$-algebra
$\ascr$ and $\pi\colon \ascr \to \ogr(\hh)$ is a
representation on a Hilbert space $\hh$, we say (see
\cite{Arv11}) that $\pi|_{ G}$ has the {\em unique
extension property} if the only UCP map $\varPhi\colon
\ascr \to \ogr(\hh)$ satisfying $\pi|_{G} =
\varPhi|_{G}$ is $\varPhi=\pi$ itself.
Theorem~\ref{murzeqiv} shows that Arveson's first four
characterizations of hyperrigidity and Kleski's (the
last) are equivalent in a more general context.
   \begin{theorem}[\mbox{\cite[Theorem~2.1]{Arv11} and
\cite[Proposition~2.2]{Kle14}}] \label{murzeqiv}
Suppose that $ G$ is a nonempty subset of a unital
$C^*$-algebra $\ascr$. Consider the following
conditions{\em :}
   \begin{enumerate}
   \item[(i)] $G$ is hyperrigid,
   \item[(ii)] for every Hilbert space $\hh$, every
representation $\pi\colon \ascr\to \ogr(\hh)$ and
every sequence $\varPhi_n\colon \ascr\to \ogr(\hh)$
$($$n\in \natu$$)$ of UCP maps,
   \begin{align*}
\lim_{n\to\infty}\|\varPhi_n(g)-\pi(g)\|=0 \;\;
\forall g\in G \implies
\lim_{n\to\infty}\|\varPhi_n(a)-\pi(a)\|=0 \;\;
\forall a\in \ascr,
   \end{align*}
   \item[(iii)] for every Hilbert space $\hh$ and every
representation $\pi\colon \ascr \to \ogr(\hh)$,
$\pi|_{ G}$ has the unique extension property,
   \item[(iv)] for every unital  $C^*$-algebra
$\bscr$, every unital $*$-homomorphism of
$C^*$-alge\-bras $\theta\colon \ascr \to \bscr$ and
every UCP map $\varPhi\colon \bscr \to \bscr$,
   \begin{align*}
\varPhi(x)=x \; \forall x \in \theta( G) \implies
\varPhi(x)=x \; \forall x \in \theta(\ascr),
   \end{align*}
   \item[(v)] for every Hilbert space $\hh$,
every representation $\pi\colon \ascr\to \ogr(\hh)$
and every sequence $\varPhi_n\colon \ascr\to
\ogr(\hh)$ $($$n\in \natu$$)$ of UCP maps,
   \begin{align*}
\wwlim_{n\to\infty}\varPhi_n(g)= \pi(g) \; \forall
g\in G \implies \sslim_{n\to\infty}\varPhi_n(a)=\pi(a)
\; \forall a\in \ascr.
   \end{align*}
   \end{enumerate}
Then conditions {\em (i)}-{\em (iv)} are equivalent.
Moreover, if $\ascr$ is separable, then conditions
{\em (i)}-{\em (v)} are equivalent, and what is more
they are still equivalent regardless of whether the
Hilbert spaces considered in either of them are
separable or not.
   \end{theorem}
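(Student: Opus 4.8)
The plan is to prove this along the classical lines of Arveson \cite{Paul02}-style dilation theory and Kleski, but to organize the implications so that neither the generation of $\ascr$ by $G$ nor any countability is ever used. I would split the five conditions into two clusters: the \emph{exact} (rigidity) conditions (iii), (iv), and the \emph{approximate} conditions (i), (ii), (v); the two clusters are bridged by Stinespring/Arveson extension and, for the analytic direction, by a $C^*$-ultrapower. Two implications are immediate: $\text{(ii)}\Rightarrow\text{(iii)}$ and $\text{(v)}\Rightarrow\text{(iii)}$ follow by feeding a constant sequence $\varPhi_n=\varPhi$ into (ii) (resp.\ (v)) for any UCP map $\varPhi\colon\ascr\to\ogr(\hh)$ with $\varPhi|_G=\pi|_G$; the hypothesis then holds trivially and the conclusion forces $\varPhi=\pi$, which is the unique extension property. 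For $\text{(i)}\Leftrightarrow\text{(ii)}$ I would translate between UCP maps on $\ogr(\hh)$ and on $\ascr$: $\text{(ii)}\Rightarrow\text{(i)}$ is immediate by restricting each $\varPhi_n\colon\ogr(\hh)\to\ogr(\hh)$ to a faithful $\pi(\ascr)$, while for $\text{(i)}\Rightarrow\text{(ii)}$ I would take an auxiliary faithful representation $\rho$, replace $(\pi,\varPhi_n)$ by $(\pi\oplus\rho,\varPhi_n\oplus\rho)$ to make the representation faithful without disturbing the hypothesis, extend each $\varPhi_n\oplus\rho$ from the faithfully embedded copy of $\ascr$ to all of $\ogr(\hh\oplus\kk)$ by Arveson's extension theorem \cite{Paul02}, apply (i) to these self-maps, and compress back.

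The analytic heart is $\text{(iii)}\Rightarrow\text{(ii)}$, and here I would deliberately avoid sequential compactness (hence any separability) by using a $C^*$-ultrapower. If (ii) fails, there are $\pi$, UCP maps $\varPhi_n$ with $\|\varPhi_n(g)-\pi(g)\|\to0$ for all $g\in G$, and $a_0$, $\epsilon>0$ with $\|\varPhi_n(a_0)-\pi(a_0)\|\Ge\epsilon$ along a subsequence. Fix a free ultrafilter $\mathcal{U}$ on $\natu$ and form the unital $C^*$-algebra $\ogr(\hh)^{\mathcal{U}}$, realized faithfully on some Hilbert space $\kk$. Then $\pi_{\mathcal{U}}(a):=[(\pi(a))_n]$ is a representation $\ascr\to\ogr(\kk)$ and $\varPhi_{\mathcal{U}}(a):=[(\varPhi_n(a))_n]$ a UCP map $\ascr\to\ogr(\kk)$ which agree on $G$ (because $\lim_{\mathcal{U}}\|\varPhi_n(g)-\pi(g)\|=0$), so (iii) applied to $\pi_{\mathcal{U}}$ forces $\varPhi_{\mathcal{U}}=\pi_{\mathcal{U}}$; but then $\lim_{\mathcal{U}}\|\varPhi_n(a_0)-\pi(a_0)\|=0$, contradicting $\Ge\epsilon$. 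For $\text{(iii)}\Leftrightarrow\text{(iv)}$ I would argue algebraically: for $\text{(iii)}\Rightarrow\text{(iv)}$ realize $\bscr\subseteq\ogr(\hh)$ faithfully, so $\theta$ is a representation and $\varPhi\circ\theta$ a UCP map agreeing with $\theta$ on $G$, whence (iii) gives $\varPhi\circ\theta=\theta$; for $\text{(iv)}\Rightarrow\text{(iii)}$ I would first reduce to faithful $\pi$ by the $\pi\oplus\rho$ trick, then extend $\varPhi\circ\pi^{-1}$ from $\pi(\ascr)$ to a UCP self-map $\Psi$ of $\ogr(\hh)$ by \cite{Paul02} and apply (iv) with $\bscr=\ogr(\hh)$, $\theta=\pi$, obtaining $\Psi(\pi(a))=\pi(a)$, i.e.\ $\varPhi=\pi$. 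This closes the equivalence of (i)--(iv) with no separability assumption whatsoever.

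For the separable part I would prove $\text{(iii)}\Rightarrow\text{(v)}$ by a subsequence-plus-Schwarz argument. If (v) fails, choose $a_0,h_0,\epsilon$ and a subsequence with $\|\varPhi_{n_k}(a_0)h_0-\pi(a_0)h_0\|\Ge\epsilon$. When $\ascr$ and $\hh$ are separable the contractive UCP maps $\ascr\to\ogr(\hh)$ form a point-weak$^*$-compact metrizable set, so a further subsequence converges point-weak$^*$ to a UCP map $\Psi$; since weak$^*$ and weak operator convergence coincide on bounded sets, the weak hypothesis on $G$ gives $\Psi|_G=\pi|_G$, hence $\Psi=\pi$ by (iii). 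Thus $\wwlim_j\varPhi_{n_{k_j}}(a)=\pi(a)$ and $\wwlim_j\varPhi_{n_{k_j}}(a^*a)=\pi(a)^*\pi(a)$ for every $a$, and combining the identity of Lemma~\ref{wphnw} with the Schwarz inequality $\varPhi_{n_{k_j}}(a)^*\varPhi_{n_{k_j}}(a)\Le\varPhi_{n_{k_j}}(a^*a)$ forces $\varPhi_{n_{k_j}}(a)h\to\pi(a)h$ for all $a,h$, contradicting the choice of $a_0,h_0$. To remove separability of the Hilbert spaces (with $\ascr$ still separable) I would compress to a separable reducing subspace: a failure is witnessed by a single $a_0$ and a single vector $h_0$, so letting $\hh_0$ be the smallest $\pi$-reducing subspace containing $h_0$ together with the countable set $\{\varPhi_n(a_j)h_0\}$ (for $\{a_j\}$ dense in $\ascr$) yields a separable $\hh_0$ on which $\pi$ restricts to a representation and the compressions $P_0\varPhi_n(\cdot)|_{\hh_0}$ remain UCP; since $\varPhi_n(a_0)h_0,\pi(a_0)h_0\in\hh_0$ the defect is preserved, reducing to the separable case.

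The step I expect to be the main obstacle is $\text{(iii)}\Rightarrow\text{(ii)}$ and $\text{(iii)}\Rightarrow\text{(v)}$, that is, upgrading the purely algebraic unique extension property to actual norm (respectively strong) convergence of sequences. The ultrapower device is exactly what makes the norm version uniform, dispensing with any countability of $G$ or separability of the algebra; by contrast, in (v) only weak convergence on $G$ is assumed, so the ultrapower comparison breaks down and one is forced into the sequential weak$^*$-compactness argument, which is precisely where separability of $\ascr$ and the Schwarz inequality for UCP maps become indispensable. The conceptual key that keeps every one of these dilation/ultrapower arguments running without assuming $G$ generates $\ascr$ is that (iii) quantifies over \emph{every} representation rather than a single universal one.
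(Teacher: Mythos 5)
Your proposal is correct, and it is genuinely more self-contained than what the paper does: the paper's ``proof'' of Theorem~\ref{murzeqiv} consists of the assertion that a careful re-reading of Arveson's proof of \cite[Theorem~2.1]{Arv11} shows (i)--(iv) remain equivalent without countability of $G$ or separability of the Hilbert spaces, together with the remark that for separable $\ascr$ the equivalence (iii)$\Leftrightarrow$(v) follows from Kleski's \cite[Proposition~2.2]{Kle14} ``by a standard reduction to the cyclic case.'' You instead supply the actual arguments. Your implication graph closes correctly ((i)$\Leftrightarrow$(ii)$\Rightarrow$(iii)$\Rightarrow$(ii), (iii)$\Leftrightarrow$(iv), and (iii)$\Leftrightarrow$(v) in the separable case), and your two technical devices are sound: the $C^*$-ultrapower for (iii)$\Rightarrow$(ii) is a clean repackaging of Arveson's asymptotic quotient $\ell^\infty(\ogr(\hh))/c_0$ argument and makes the independence from separability transparent, while your point-weak$^*$ compactness plus Schwarz-inequality argument for (iii)$\Rightarrow$(v) is essentially Kleski's actual mechanism (the paper's phrase ``reduction to the cyclic case'' notwithstanding), and your compression to a separable $\pi$-reducing subspace containing $\{\varPhi_n(a_j)h_0\}$ correctly handles nonseparable $\hh$. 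Two small points to tidy up: in the ultrapower step you must either pass to the offending subsequence before forming the ultrapower or choose $\mathcal{U}$ to contain the set $\{n_k\}$, since otherwise $\lim_{\mathcal{U}}\|\varPhi_n(a_0)-\pi(a_0)\|$ need not be $\Ge\epsilon$; and in (i)$\Rightarrow$(ii) you should note explicitly that $(\pi\oplus\rho)^{-1}$ is well defined and completely positive on $(\pi\oplus\rho)(\ascr)$ because a faithful representation is a $*$-isomorphism onto its image. Neither is a gap, only a matter of wording.
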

   \begin{proof}
A careful analysis of the proof of
\cite[Theorem~2.1]{Arv11} shows that conditions
(i)-(iv) are equivalent without the assumption that
the Hilbert spaces appearing in them are separable,
and if $\ascr$ is separable, they are still equivalent
if the Hilbert spaces appearing in at least one of
them are separable (this also applies to
Definition~\ref{dyfnh}). If $\ascr$ is separable, then
conditions (iii) and (v) are equivalent by a standard
reduction to the cyclic case.
   \end{proof}
The corollary below is a direct consequence of
Theorems~\ref{soptw} and \ref{murzeqiv}.
   \begin{corollary}\label{toeq-1}
Let $G$ be a nonempty subset of a commutative unital
$C^*$-algebra $\ascr$. Then $G$ is hyperrigid if and
only if for any representation $\pi\colon \ascr\to
\ogr(\hh)$ on a Hilbert space $\hh$ and any regular
semispectral measure $F\colon
\borel(\mathfrak{M}_{\ascr}) \to \ogr(\hh)$,
   \begin{align*}
\pi(a) = \int_{\mathfrak{M}_{\ascr}} \widehat{a} \, \D
F \quad \forall a \in G \;\;\implies\;\; \textit{$F$
is the spectral measure of $\pi$.}
   \end{align*}
Moreover, if $\ascr$ is separable, these two
conditions are still equivalent regardless of whether
the Hilbert spaces considered in either of them are
separable or not.
   \end{corollary}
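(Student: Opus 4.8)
The plan is to read the equivalence directly off the two cited theorems, using Theorem~\ref{soptw} as a dictionary between maps and measures and Theorem~\ref{murzeqiv} to recast hyperrigidity as a unique extension property. First I would invoke the equivalence (i)$\Leftrightarrow$(iii) of Theorem~\ref{murzeqiv}: $G$ is hyperrigid if and only if, for every Hilbert space $\hh$ and every representation $\pi\colon \ascr \to \ogr(\hh)$, the restriction $\pi|_G$ has the unique extension property, i.e.\ the only UCP map $\varPhi\colon \ascr \to \ogr(\hh)$ with $\varPhi|_G = \pi|_G$ is $\varPhi = \pi$. It then remains only to rephrase this statement about UCP maps in terms of semispectral measures.

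Fix a representation $\pi$ and let $E$ be its spectral measure (Theorem~\ref{soptw}(i)), so $\pi(a) = \int_{\mathfrak{M}_{\ascr}} \widehat{a}\,\D E$ for all $a\in\ascr$. By Theorem~\ref{soptw}(ii), the correspondence $F \mapsto \varPhi_F$ with $\varPhi_F(a) = \int_{\mathfrak{M}_{\ascr}} \widehat{a}\,\D F$ is a bijection from the regular semispectral measures $F\colon \borel(\mathfrak{M}_{\ascr}) \to \ogr(\hh)$ onto the UCP maps $\varPhi\colon \ascr \to \ogr(\hh)$. Under this bijection the hypothesis $\varPhi_F|_G = \pi|_G$ becomes $\pi(a) = \int_{\mathfrak{M}_{\ascr}} \widehat{a}\,\D F$ for all $a\in G$, which is precisely the if-clause of the implication in the corollary. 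For the conclusion, I would note that $\pi$ is itself represented by the spectral (hence semispectral) measure $E$, so $\pi = \varPhi_E$; the uniqueness in Theorem~\ref{soptw}(ii) then shows that $\varPhi_F = \pi$ is equivalent to $F = E$. Since $E$ is by definition the spectral measure of $\pi$, and since the moreover part of Theorem~\ref{soptw} guarantees that any regular semispectral measure representing a representation is automatically spectral, the conclusion $\varPhi_F = \pi$ is exactly the assertion that $F$ is the spectral measure of $\pi$.

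Putting these together, for each fixed $\pi$ the unique extension property of $\pi|_G$ is equivalent to the displayed implication; letting $\pi$ range over all representations (equivalently, over all regular spectral measures) yields the stated equivalence. The separable addendum is inherited verbatim from the moreover part of Theorem~\ref{murzeqiv}, which keeps conditions (i)--(v) equivalent irrespective of the separability of the Hilbert spaces involved. I do not expect a genuine obstacle: the argument is a translation, and the only point that requires care is identifying the conclusion $\varPhi_F = \pi$ with ``$F$ is the spectral measure of $\pi$,'' which rests on the uniqueness of representing measures together with the automatic spectrality furnished by the moreover part of Theorem~\ref{soptw}.
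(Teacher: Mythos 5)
Your argument is correct and is exactly the route the paper intends: the paper states Corollary~\ref{toeq-1} as a direct consequence of Theorems~\ref{soptw} and~\ref{murzeqiv}, and your proof fills in precisely that translation, using (i)$\Leftrightarrow$(iii) of Theorem~\ref{murzeqiv} together with the bijection between UCP maps and regular semispectral measures from Theorem~\ref{soptw}(ii) and its uniqueness clause.
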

   \section{\label{AppC}Weak and strong closures of normal operators}
Theorem~\ref{jydgunr} is closely related to two
classical results in operator theory. The first is
Bishop's theorem, which says that if $\hh$ is any
Hilbert space, then the closure of the set
$\mathscr{N}$ of all normal operators on $\hh$ in the
strong operator topology coincides with the set of all
subnormal operators on $\hh$ (see
\cite[Theorem~II.1.17]{Con91}). The second is the
Conway-Hadwin theorem (see \cite[Theorem]{CoHad83}),
whose assertion (ii), when applied to the class
$\mathscr{S}=\mathscr{N}$, states that if $\dim
\hh=\aleph_0$, then an operator $T\in \ogr(\hh)$ is
the limit, in the weak operator topology, of a
sequence of normal operators on $\hh$ if and only if
$T$ is the compression of a normal operator $S\in
\ogr(\kk)$ to $\hh$ (i.e., $T=P S|_{\hh}$, where $\kk
\supseteq \hh$ and $P\in \ogr(\kk)$ is the orthogonal
projection of $\kk$ onto $\hh$). By Halmos's dilation
theorem, any operator $T$ on an (arbitrary) $\hh$ is
the compression of a normal operator to $\hh$. If
$\dim \hh \Ge \aleph_0$, then the closure of
$\mathscr{N}$ in the weak operator topology equals
$\ogr(\hh)$ (see \cite[Problem~224]{Hal82}). However,
if $2 \Le \dim \hh < \aleph_0$, then the sequential
closure of $\mathscr{N}$ equals $\mathscr{N}$, while
$\ogr(\hh)$ are compressions of normal operators.
   \bibliographystyle{amsalpha}
   
   \end{document}